\let\emph\undefined\let\itshape\undefined
\newcommand{\emph}[1]{\textsl{#1}}\let\itshape\slshape
\DeclareMathAlphabet{\mathpzc}{OT1}{pzc}{m}{it}
\numberwithin{equation}{section}
\newenvironment{myenumerate}{\begin{enumerate}[topsep=2pt,parsep=2pt,partopsep=2pt,itemsep=0pt,label={\normalfont(\alph*)}]\itemsep0pt}{\end{enumerate}}
\newenvironment{xenumerate}{\begin{enumerate}[topsep=2pt,parsep=2pt,partopsep=2pt,itemsep=0pt,label={\normalfont(\arabic*)}]\itemsep0pt}{\end{enumerate}}
\newenvironment{pnum}{\begin{enumerate}[topsep=2pt,parsep=2pt,partopsep=2pt,itemsep=0pt,label={(\roman{*})}]}{\end{enumerate}}
\newtheoremstyle{style1}
  {13pt}
  {13pt}
  {}
  {}
  {\normalfont\bfseries}
  {.}
  {.5em}
  {}
\theoremstyle{style1}
\newtheorem{definition}{Definition}[section]
\newtheorem{example}[definition]{Example}
\newtheorem{remark}[definition]{Remark}
\newtheorem{preremarks}[definition]{Remarks}
\newenvironment{remarks}[1][]{\begin{preremarks} \begin{myenumerate} }{  \end{myenumerate} \end{preremarks} }
\newtheoremstyle{style2}
  {13pt}
  {13pt}
  {\slshape}
  {}
  {\normalfont\bfseries}
  {.}
  {.5em}
  {}
\theoremstyle{style2}
\newtheorem{lemma}[definition]{Lemma}
\newtheorem{theorem}[definition]{Theorem}
\newtheorem{proposition}[definition]{Proposition}
\newtheorem{corollary}[definition]{Corollary}
\newcommand{\myforall}{\quad \text{for all}\quad }
\newcommand{\overeqn}[2]{\stackrel{\text{\tiny\eqref{#1}}}{#2}}
\newcommand{\Sym}{\operatorname{Sym}}
\newcommand{\HSym}{\operatorname{HSym}}
\newcommand{\id}{\operatorname{id}}
\newcommand{\Hom}{\operatorname{Hom}}
\newcommand{\cat}[1]{\mathcal{#1}}
\newcommand{\catf}[1]{{\normalfont\textbf{#1}}}
\newcommand{\FinGrpd}{\catf{FinGrpd}}
\newcommand{\Vect}{\catf{Vect}}
\newcommand{\FinVect}{\catf{FinVect}}
\newcommand{\TRepGrpd}{\catf{2VecBunGrpd}}
\newcommand{\RepGrpdC}{\catf{VecBun}_{\mathbb{C}}\catf{Grpd}}
\newcommand{\Cob}{\catf{Cob}}
\newcommand{\Kan}{\catf{Kan}}
\newcommand{\Mod}{\catf{Mod}}
\newcommand{\TwoVect}{\catf{2Vect}}
\newcommand{\PBun}{\catf{PBun}}
\newcommand{\Par}{\operatorname{Par}}
\newcommand{\dir}[2]{to [out=#1, in=#2]}
\newcommand{\im}{\operatorname{i}}
\newcommand{\img}{\operatorname{im}}
\newcommand{\sphere}{\mathbb{S}}
\newcommand{\Com}{\operatorname{Com}}
\newcommand{\Aut}{\operatorname{Aut}}
\newcommand{\torus}{\mathbb{T}}
\let\Phi\undefined\DeclareMathSymbol{\Phi}{\mathalpha}{letters}{"08}
\let\Psi\undefined\DeclareMathSymbol{\Psi}{\mathalpha}{letters}{"09}
\let\Sigma\undefined\DeclareMathSymbol{\Sigma}{\mathalpha}{letters}{"06}
\let\Xi\undefined\DeclareMathSymbol{\Xi}{\mathalpha}{letters}{"04}
\let\Pi\undefined\DeclareMathSymbol{\Pi}{\mathalpha}{letters}{"05}
\let\Gamma\undefined\DeclareMathSymbol{\Gamma}{\mathalpha}{letters}{"00}
\let\Omega\undefined\DeclareMathSymbol{\Omega}{\mathalpha}{letters}{"0A}
\let\Lambda\undefined\DeclareMathSymbol{\Lambda}{\mathalpha}{letters}{"03}
\newcommand{\spaceplease}{\needspace{5\baselineskip}}
\let\to=\longrightarrow
\let\mapsto=\longmapsto
\newcommand{\Bimod}{\catf{Bimod}}
\newcommand{\Grpd}{\catf{Grpd}}
\begin{document}\thispagestyle{empty}
	\setlength{\parskip}{0pt}
	\setlength{\abovedisplayskip}{7pt}
	\setlength{\belowdisplayskip}{7pt}
	\setlength{\abovedisplayshortskip}{0pt}
	\setlength{\belowdisplayshortskip}{0pt}

	\begin{flushright}
		\textsf{ZMP-HH/18-3}\\
		\textsf{Hamburger Beiträge zur Mathematik Nr. 723}\\
		\textsf{February 2018}
	\end{flushright}
	
	\vspace*{1cm}
	
	\begin{center}
		\Large \textbf{Extended Homotopy Quantum Field Theories} \\[0.5ex] \textbf{and their Orbifoldization} \normalsize \\[4ex] Christoph Schweigert and Lukas Woike
	\end{center}

	\begin{center}
		\emph{Fachbereich Mathematik,   Universität Hamburg}\\
		\emph{Bereich Algebra und Zahlentheorie}\\
		\emph{Bundesstra{\ss}e 55,   D -- 20 146  Hamburg}
	\end{center}

	\begin{abstract}
		\noindent \textbf{Abstract.} 
		Based on a detailed definition of \emph{extended} homotopy quantum field theories
		we develop a field-theoretic orbifold construction for these theories when the target space is the classifying space of a finite group $G$, i.e.\ for $G$-equivariant topological field theories.
		More precisely, we use a recently developed bicategorical version of the parallel section
		functor to  associate to an extended equivariant topological field theory an ordinary
	extended topological field theory. 
		One main motivation is the 3-2-1-dimensional case where our orbifold construction allows us to describe the orbifoldization of equivariant modular categories by a geometric construction.
		As an important ingredient of this result, we prove that a 3-2-1-dimensional $G$-equivariant topological field theory yields a $G$-multimodular category by evaluation on the circle. 
		The orbifold construction is a special case of a pushforward operation along an arbitrary morphism of finite groups and provides a valuable tool for the construction of extended homotopy quantum field theories. 
	\end{abstract}\thispagestyle{empty}
		\thispagestyle{empty}
	\vspace*{1cm}
	\tableofcontents

\section{Introduction and summary}
Homotopy quantum field theories, as introduced in \cite{turaevhqftold} and further developed in the monograph \cite{turaevhqft}, are topological field theories defined on bordisms equipped with maps to a fixed target space. In the most investigated special case, this target is chosen to be aspherical, i.e.\ the classifying space of a (finite) group. Homotopy quantum field theories with such a choice of target space are called \emph{equivariant topological field theories}.

It is an interesting question whether equivariant topological field theories allow for an \emph{orbi\-foldization}, i.e.\ a construction which assigns to a given equivariant topological field theory a non-equivariant topological field theory, the \emph{orbifold theory}. Such an orbifold construction should be understood as a sum over twisted sectors combined with a computation of the invariants of the theory in the appropriate sense, see \cite{dvvvorbifold} for this perspective on orbifoldization, and e.g.\ \cite{fks, bantay98, bantay02, cuietal, evansgannon}
for the study of orbifold theories, in particular permutation orbifolds. 
There exists an alternative point of view which aims at associating
to a topological field theory with defects an orbifold theory
\cite{crsorbifold}.
The relation between these two types of constructions and a
description of generalized orbifolds \cite{ffrs} in terms of topological field theories
is beyond the scope
of this paper.

The field-theoretic orbifold construction presented in this paper provides insight into the relation of equivariant and non-equivariant field theories, but also has applications on a purely algebraic level: Topological field theories, both equivariant and non-equivariant ones, produce by evaluation on certain manifolds algebraic structures of independent interest for which sometimes orbifoldization procedures are known, e.g.\ for crossed Frobenius algebras  \cite{kaufmannorb,schweigertwoikeofk} or equivariant categories  \cite{kirrilovg04,centerofgradedfusioncategories}. An orbifold construction on the level of field theories provides a profitable and unifying geometric access to these concepts. In order to describe the orbifoldization of equivariant categories, it is necessary to consider \emph{extended} field theories.

Thus, in this paper we give an orbifold construction for extended equivariant topological field theories, where the specification \emph{extended} refers to the bicategorical nature of the theory. Also we focus on oriented theories although the construction does not depend on orientability. For a given finite group $G$, the construction takes as an input an extended $G$-equivariant topological field theory, i.e.\ a symmetric monoidal functor $Z : G\text{-}\Cob(n,n-1,n-2) \to \TwoVect$ from the symmetric monoidal bicategory $G\text{-}\Cob(n,n-1,n-2)$ of $n$-dimensional bordisms equipped with a map into $BG$ to the symmetric monoidal bicategory $\TwoVect$ of 2-vector spaces. The output of our construction is the orbifold theory $Z/G : \Cob(n,n-1,n-2) \to \TwoVect$, a non-equivariant topological field theory. Our orbifold construction $Z \mapsto Z/G$ lifts previous work \cite{schweigertwoikeofk} to the bicategorical setting. We proceed as follows: 

\begin{xenumerate}
	\item First we produce from the equivariant theory $Z$ a symmetric monoidal functor $\widehat{Z}:\Cob(n,n-1,n-2) \to \TRepGrpd$ from the cobordism category to the symmetric monoidal bicategory $\TRepGrpd$ built in \cite{swpar} from 2-vector bundles over essentially finite groupoids and (higher) spans of groupoids, see also \cite{haugseng} for related concepts. Hence, this step changes the coefficients of the theory from $\TwoVect$ to the more complicated coefficients $\TRepGrpd$ which, in exchange, now contain information about the equivariance. This step will be referred to as \emph{change to equivariant coefficients} and will be explained in Section~\ref{equivariantcoeffsec}. It produces examples for extended topological field theories with non-trivial coefficients (i.e.\ with a target category different from $\TwoVect$). 
	
	\item To produce topological field theories valued in $\TwoVect$, we need the symmetric monoidal parallel section functor
	\begin{align} \Par : \TRepGrpd \to \TwoVect\end{align} whose construction was the main result of \cite{swpar}. It takes (homotopy) invariants of 2-vector bundles and sends (higher) spans of groupoids to certain pull-push maps combined with (higher) intertwiners. To some extent, it makes the idea of the `Sum functor' in \cite{fhlt} precise. By restriction to the endomorphisms of the respective monoidal units one obtains the functor developed in \cite{trova}.

\end{xenumerate}
Now we can define the orbifold theory as the concatenation
\begin{align}
\frac{Z}{G}: \Cob(n) \stackrel{\widehat{Z}}{\to} \RepGrpdC \xrightarrow{\Par} \Vect_\mathbb{C}\ . 
\end{align}
The construction is functorial in $Z$, so the orbifoldization takes the form of a functor
\begin{align} \frac{?}{G} : \HSym(G\text{-}\Cob(n,n-1,n-2), \TwoVect) &\to \Sym(\Cob(n,n-1,n-2), \TwoVect)  \label{eqnofkfunctor} \\ Z &\mapsto \frac{Z}{G}\end{align}
from the 2-groupoid $\HSym(G\text{-}\Cob(n,n-1,n-2), \TwoVect)$ of extended $G$-equivariant topological field theories to the 2-groupoid  $\Sym(\Cob(n,n-1,n-2), \TwoVect)$ of extended topological field theories. An explicit description of the orbifold construction is given in Proposition~\ref{satzorbifoldconcrete}. In Section~\ref{secpushofk}, finally, we generalize the orbifold construction to a pushforward operation 
	\begin{align}
	\lambda_* : \HSym (G\text{-}\Cob(n,n-1,n-2),\TwoVect) \to \HSym ({H}\text{-}\Cob(n,n-1,n-2),\TwoVect)
	\end{align}
for equivariant topological field theories along a morphism $\lambda : G \to H$ of finite groups.

The main motivation for the generalization of our orbifold construction 
to \emph{extended} topological field theories
comes from the 3-2-1-dimensional case. Sections~\ref{equivmonstructure}-\ref{sectwist} concentrate on the category $\cat{C}^Z$ (more precisely: 2-vector space) obtained as the value of a 3-2-1-dimensional $G$-equivariant topological field theory on the circle. These sections can be read independently of the sections involving the orbifold construction. We prove that by evaluation of the field theory on the cylinder this category comes with the structure of a 2-vector bundle over the loop groupoid $G//G$ of $G$ and is, by evaluation on the pair of pants, endowed with an equivariant monoidal structure (Proposition~\ref{satzequivariantmonoidalstructure}). Moreover, $\cat{C}^Z$ has duals (Proposition~\ref{satzczduals}) and comes with a $G$-braiding (Proposition~\ref{satzequivbraiding}). The Dehn twist yields a $G$-twist (Proposition~\ref{satzgribbonkat}) which can be interpreted as a homotopical relaxation of the self-invariance of twisted sectors known from $G$-crossed Frobenius algebras (Remark~\ref{homrelaxbmk}). In Proposition~\ref{satzdescriptorbtensorstructure} we explicitly compute how the $G$-ribbon structure of $\cat{C}^Z$ behaves under the geometric orbifold construction, and in Theorem~\ref{thmorbifoldtheorymodular} we prove our main result on the relation of this orbifold structure  to the one obtained via the purely algebraic orbi\-foldization procedure in terms of orbifold categories \cite{kirrilovg04,centerofgradedfusioncategories}. \\[2ex]
\noindent {\textbf{Theorem~\ref{thmorbifoldtheorymodular}.} \itshape
	The square 	\begin{center}
	\begin{tikzpicture}[scale=2, implies/.style={double,double equal sign distance,-implies},
	dot/.style={shape=circle,fill=black,minimum size=2pt,
		inner sep=0pt,outer sep=2pt},]
	\node (A1) at (0,1) {$\substack{\text{3-2-1-dimensional } G\text{-equivariant} \\ \text{topological field theories} } $};
	\node (A2) at (4,1) {$\substack{\text{complex finitely semisimple} \\ G\text{-ribbon categories} } $};
	\node (B1) at (0,0) {$\substack{\text{3-2-1-dimensional} \\ \text{topological field theories} } $};
	\node (B2) at (4,0) {$\substack{\text{complex finitely semisimple} \\ \text{ribbon categories} } $};
	\path[->,font=\small]
	(A1) edge node[above]{$\substack{\text{evaluation} \\  \text{on the circle}}$} (A2)
	(A1) edge node[left]{$\substack{ \text{orbifoldization $?/G$  , see \eqref{eqnofkfunctor}} \\  \text{(geometric orbifoldization)}}$} (B1)
	(A2) edge node[right]{$\substack{ \text{orbifold category \cite{kirrilovg04}} \\  \text{(algebraic orbifoldization)}}$}  (B2)
	(B1) edge node[below]{$\substack{\text{evaluation} \\  \text{on the circle}}$} (B2);
	\end{tikzpicture}
	\end{center} commutes up to natural isomorphism.} \\[2ex] We make the following statements about the modularity of the categories appearing on the right hand side: We show that the category $\cat{C}^Z$ obtained from a 3-2-1-dimensional $G$-equivariant topological field theory via evaluation on the circle is $G$-modular if its monoidal unit is simple and thereby lift one of the main results of \cite{BDSPV153D} to the equivariant case (Theorem~\ref{thmgmodcatoncircle}, \ref{thmgmodcatoncirclea}). The proof makes explicit use of the interplay between the geometric and algebraic orbi\-foldization. In case the monoidal unit of $\cat{C}^Z$ is not simple, we prove that $\cat{C}^Z$ is $G$-multimodular (Theorem~\ref{thmgmodcatoncircle}, \ref{thmgmodcatoncircleb}), see Definition~\ref{defmultimodular} for notion of $G$-multimodularity. 
This provides a functor from 3-2-1-dimensional $G$-equivariant topological field theories to $G$-multimodular categories and hence a first step towards the classification of 3-2-1-dimensional $G$-equivariant topological field theories (Remark~\ref{remclass}). 

As a further application, our construction provides a uniform geometric formulation for the following two instances of orbifoldization:

\begin{itemize}
	\item In combination with the cover functor \cite{barmeierschweigert},
	 our orbifold construction yields permutation orbifolds \cite{fks,bantay98,bantay02,evansgannon}, see Example~\ref{expermutationorbifolds}.
	
	\item The orbifoldization of extended cohomological homotopy quantum field theories leads to the twisted Drinfeld doubles of a finite group from \cite{drp90}, as discussed in \cite{muellerwoike}.
	
\end{itemize}
Our construction ensures the existence of these orbifold theories as extended topological field theories and makes them explicitly computable. For example, we provide a formula for the number of simple objects of the orbifold theory (Theorem~\ref{thmnumberofsimpleobjectsorbifold}), which as a byproduct yields restrictions for manifold invariants coming from homotopy quantum field theories (Corollary~\ref{kororbifoldtheorymodular}).

\subparagraph*{Acknowledgments.}
We would like to thank Lukas Müller for his constant interest in this project and his numerous suggestions on a draft version of this article.
We are also grateful to Michael Müger and Alexis Virelizier for helpful discussions and to Marco Benini and Vincent Koppen for comments on the manuscript. 

CS is partially supported by the Collaborative Research Centre 676 ``Particles,
Strings and the Early Universe -- the Structure of Matter and Space-Time"
and by the RTG 1670 ``Mathematics inspired by String theory and Quantum
Field Theory''.
LW is supported by the RTG 1670 ``Mathematics inspired by String theory and Quantum
Field Theory''.

\subparagraph*{Conventions.}
All vector spaces or higher analogues thereof encountered in this article will be over the field of complex numbers. Therefore, we suppress the field in the notation and write $\Vect$ instead of $\Vect_\mathbb{C}$. Still, all constructions would also work over an algebraically closed field of characteristic zero.

We will refer to weak 2-functors between bicategories just as functors unless we want to stress the categorical level.

\section{The definition of extended equivariant topological field theories}
In this first section we develop a higher categorical version of the notion of a homotopy quantum field theory in \cite{turaevhqft}. By specializing to aspherical targets we obtain extended equivariant topological field theories. In the 3-2-1-dimensional case, equivariant topological field theories have also been defined in \cite{maiernikolausschweigerteq} using the language of principal fiber bundles. The present generalization to arbitrary dimension and target space seems to be new.

\subsection{Extended homotopy quantum field theories}
The definition of an extended homotopy quantum field theory requires a suitable symmetric monoidal bordism bicategory $T\text{-}\Cob(n,n-1,n-2)$ for an arbitrary target space $T$. It will generalize the symmetric monoidal bordism bicategory $\Cob(n,n-1,n-2)$ used as the domain of extended topological field theories, see e.g.\ \cite{schommerpries}, in the sense that all manifolds involved are additionally equipped with continuous maps to $T$.

For the definition of $T\text{-}\Cob(n,n-1,n-2)$ we need not only manifolds and manifolds with boundary, but also manifolds with corners whose definition we briefly recall, see also \cite[Section~3.1.1]{schommerpries}: An \emph{$n$-dimensional manifold with corners of codimension 2} is a second countable Hausdorff space $M$ together with a maximal atlas of charts of the form
\begin{align}
M\supseteq U  \stackrel{\varphi}{\to} V\subset \mathbb{R}^{n-2}\times (\mathbb{R}_{\geq 0})^2\ .
\end{align}
Given $x\in M$ we define the \emph{index} of $x$ to be the number of coordinates of $(\text{pr}_{(\mathbb{R}_{\geq 0})^2}\circ \varphi)(x)$ equal to 0 for some chart $\varphi$ (and hence for all charts).
The \emph{corners} are points of index 2.
A \emph{connected face} of $M$ is the closure of a maximal connected subset of points of index 1. A \emph{face} is the disjoint union of connected faces. A \emph{manifold with faces} is a manifold with corners such that every point of index 2 belongs to exactly two different connected faces.

Finally, an \emph{$n$-dimensional $\langle 2 \rangle $-manifold} is an $n$-dimensional manifold $M$ with faces together with a decomposition $\partial M = \partial_0 M \cup \partial_1 M$ of its topological boundary into faces such that $\partial_0 M \cap \partial_1 M$ is the set of corners of $M$. We call $\partial_0 M$ the 0-boundary of $M$ and $\partial_1 M$ the 1-boundary of $M$.

\begin{definition}[Bordism bicategory for arbitrary target space]\label{defmscbordcattarget}
	Let $n\ge 2$. For a non-empty topological space $T$, referred to as the \emph{target space}, the \emph{bicategory $T\text{-}\Cob(n,n-1,n-2)$ of bordisms with maps to $T$} is defined in the following way:
	\begin{xenumerate}
		\item[(0)] Objects, i.e.\ 0-cells, are pairs $(S,\xi)$, where $S$ is an $(n-2)$-dimensional oriented closed manifold and $\xi: S \to T$ is a map (by a map between topological spaces we always mean a continuous map).

		\item[(1)]
		A 1-morphism or 1-cell $(\Sigma,\varphi) : (S_0,\xi_0) \to (S_1,\xi_1)$ is an oriented compact collared bordism $(\Sigma,\chi_-,\chi_+) : S_0 \to S_1$, i.e.\ a compact oriented $(n-1)$-dimensional manifold $\Sigma$ with boundary together with orientation preserving diffeomorphisms $\chi_-: S_0\times [0,1) \to \Sigma_-$ and $\chi_+:  S_1\times (-1,0] \to \Sigma_+$, where $\Sigma_- \cup \Sigma_+$ is a collar of $\partial \Sigma$,
		and a continuous map $\varphi : \Sigma \to T$ such that the diagram 
		\begin{equation}
		\begin{tikzcd}
		\, & \Sigma \arrow{dd}{\varphi}  & \\
		S_0\times\{0\}  \arrow{ru}{\chi_-} \arrow[swap]{dr}{\xi_0} & & S_1\times \{0\}  \arrow[swap]{lu}{\chi_+} \arrow{dl}{\xi_1} \\
		& T  &
		\end{tikzcd}
		\end{equation}
		commutes. We do not assume any compatibility on the collars.  Composition of 1-morphisms is by gluing of bordisms along collars and maps, respectively. Note that the collars are necessary to define the composition. Identities are given by cylinders decorated with the homotopy which is constant along the cylinder axis.

		\item[(2)]
		A 2-morphism or 2-cell $(\Sigma,\varphi) \Longrightarrow (\Sigma',\varphi')$ between 1-morphisms $(S_0,\xi_0) \to (S_1,\xi_1)$ is an equivalence class of pairs $(M,\psi)$, where $M : \Sigma \to \Sigma'$ is an $n$-dimensional collared compact oriented bordism with corners and  $\psi : M \to T$ is a map.
		Here an $n$-dimensional collared compact oriented bordism with corners is a $\langle 2 \rangle$-manifold $M$ together with
		\begin{itemize}
			\item
			a decomposition of its 0-boundary $\partial_0 M = \partial_0 M_- \cup \partial_0 M_+$ and corresponding orientation preserving diffeomorphisms $\delta_- : \Sigma \times [0,1) \to M_-$ and $\delta_+ : \Sigma' \times (-1,0] \to M_+$ onto collars of this decomposition,
			
			\item
			a decomposition of its 1-boundary $\partial_1 M = \partial_1 M_- \cup \partial_1 M_+$ and corresponding orientation preserving diffeomorphisms $\alpha_- : S_0 \times [0,1)\times [0,1] \to M_-$ and $\alpha_+ : S_1 \times (-1,0]\times [0,1] \to M_+$ onto collars of this decomposition
			such that there is an $ \varepsilon >0$ and commutative triangles
			\begin{equation}
			\label{Condition Collars 1}
			\begin{tikzcd}
			S_0 \times [0,1)\times [0,\varepsilon) \ar{r}{\alpha_-}  \ar[swap]{rd}{\chi_-\times \id}& M & S_1 \times (-1,0]\times [0,\varepsilon) \ar[swap]{l}{\alpha_+} \ar{ld}{\chi_+\times \id} \\
			& \Sigma \times [0,\varepsilon) \ar[swap]{u}{\delta_-}&
			\end{tikzcd}
			\end{equation}
			and
			
			\begin{equation}
			\label{Condition Collars 2}
			\begin{tikzcd}
			\,          & \Sigma' \times (-\varepsilon,0] \ar{d}{\delta_+}   & \\
			S_0 \times [0,1)\times (1-\varepsilon,1] \ar{r}{\alpha_-} \ar{ru}{\chi'_-\times \id-1}& M & S_1 \times (-1,0]\times (1-\varepsilon,1] \ar[swap]{l}{\alpha_+} \ar[swap]{lu}{\chi'_+\times \id-1}
			\end{tikzcd}.
			\end{equation}
			Furthermore, we require the diagram
			\[
			\begin{tikzcd}
			\,        & M \ar{dd}{\psi} & \\
			S_0\times [0,1] \sqcup \Sigma  \ar{ru}{\alpha_- \sqcup \delta_-} \ar[swap]{dr}{\xi_0\circ \text{pr}_{S_0} \sqcup \varphi} & & S_1\times [0,1] \sqcup \Sigma' \ar[swap]{lu}{\alpha_+ \sqcup \delta_+} \ar{dl}{\xi_1\circ \text{pr}_{S_1} \sqcup \varphi'} \\
			& T &
			\end{tikzcd}
			\]
			to commute. Note again that we do not assume any compatibility on the collars.
		\end{itemize}

		Two such pairs $(M,\psi)$ and $(\widetilde M,\widetilde \psi)$ are defined to be equivalent if there is an orientation-preserving diffeomorphism $\Phi : M \to M$ making the diagram
		\[
		\begin{tikzcd}
		\,        & M \ar{dd}{\Phi} & \\
		\Sigma \times [0,1)  \ar{ru}{\delta_-} \ar[swap]{dr}{\widetilde \delta_-} & & \Sigma' \times (-1,0] \ar[swap]{lu}{\delta_+} \ar{dl}{\widetilde \delta_+} \\
		& \widetilde M &
		\end{tikzcd}
		\]
		and a similar diagram for the collars of the 1-boundary commute such that additionally $\psi = \widetilde \psi \circ \Phi$.

	\end{xenumerate}
	To define the vertical composition of 2-morphisms, we fix once and for all a diffeomorphism $[0,2]\to [0,1]$ which is the identity on a neighborhood of $0$ and near 2 given by $x\mapsto x-1$. Now the vertical composition is given by gluing using the collars of 0-boundaries. Furthermore, we can use the diffeomorphism fixed above to rescale the ingoing and outgoing 1-collars. As for 1-morphisms, there is no problem in gluing maps to $T$.
	
	Horizontal composition of 2-morphisms is defined by gluing manifolds and maps along 1-bounda\-ries. The new 0-collars can be constructed from the old ones by restricting them to $[0,\varepsilon)$ such that condition \eqref{Condition Collars 1} and \eqref{Condition Collars 2} ensure that we can glue them along the boundary and then rescaling the interval keeping a neighborhood of 0 fixed.

	Disjoint union endows the bicategory $T\text{-}\Cob(n,n-1,n-2)$ with the structure of a symmetric monoidal bicategory with duals.
\end{definition}

\begin{remarks} \label{bmkextcob}
	
	\item Following standard conventions, we will denote the composition of 1-mor\-phisms and 2-mor\-phisms from right to left by using the concatenation symbol $\circ$. Whenever we draw pictures of bordisms, however, composition has to be read from left to right. \label{bmkextcobcomp}
	
	\item To maintain readability, we will often suppress the collars in the notation.
	
	\item Consider a 1-morphism $(\Sigma,\varphi) : (S_0,\xi_0) \to (S_1,\xi_1)$, a compact collared bordism $\Sigma' : S_0 \to S_1$ and a diffeomorphism $\Phi : \Sigma \to \Sigma'$ preserving orientation and the collars. This data gives rise to an invertible 2-morphism $(M,\psi) : (\Sigma,\varphi) \to (\Sigma',\Phi_* \varphi := \varphi \circ \Phi^{-1})$ as follows: As the underlying compact collared bordism with corners $M$ we take the result of gluing $\Sigma \times [0,1]$ and $\Sigma' \times [0,1]$ via $\Phi$. Moreover, $\psi : M \to T$ is the map that $\varphi$ and $\Phi_* \varphi$ give rise to;
	for details on this mapping cylinder construction see \cite[Appendix~A.2]{muellerszabo}. \label{bmkextcobdiffeo}
	
\end{remarks}

 Having defined our bordism bicategory with target $T$ we are now ready to generalize the definition of a homotopy quantum field theory in \cite{turaevhqft}.

\begin{definition}[Extended homotopy quantum field theory]
	An \emph{$n$-dimensional extended homotopy quantum field theory with target space $T$ taking values in a symmetric monoidal bicategory $\cat{S}$} is a symmetric monoidal functor $Z: T\text{-}\Cob(n,n-1,n-2) \to \cat{S}$ satisfying the \emph{homotopy invariance property}: For two 2-morphisms $(M,\psi), (M,\psi') : (\Sigma_a,\varphi_a) \Longrightarrow (\Sigma_b,\varphi_b)$ between the 1-morphisms $(\Sigma_a,\varphi_a) , (\Sigma_b,\varphi_b) : (S_0,\xi_0) \to (S_1,\xi_1)$  with $\psi \simeq \psi'$ relative $\partial M$ we have the equality 
	\begin{align}\begin{array}{cc}
	\begin{tikzpicture}[scale=1.3, implies/.style={double,double equal sign distance,-implies},
	dot/.style={shape=circle,fill=black,minimum size=2pt,
		inner sep=0pt,outer sep=2pt},]
	\draw[line width=0.5pt,->]
	(0,2) node {\footnotesize $Z(S_0,\xi_0)$}
	(4,2) node {\footnotesize $Z(S_1,\xi_1)$}
	(5,2) node {$=$}
	(2,4.2) node {\footnotesize$Z(\Sigma_a,\varphi_a)$}
	(2,-0.2) node {\footnotesize $Z(\Sigma_b,\varphi_b)$}
	(0,2.6)   \dir{90}{90} (4,2.6) ;
	\draw[line width=0.5pt,->]
	(0,1.4)   \dir{270}{270} (4,1.5)
	;
	\draw (2,3.2) edge[implies] node[left] {\footnotesize $Z(M,\psi)$} (2,0.8);
	\end{tikzpicture}  & \begin{tikzpicture}[scale=1.3, implies/.style={double,double equal sign distance,-implies},
	dot/.style={shape=circle,fill=black,minimum size=2pt,
		inner sep=0pt,outer sep=2pt},]
	\draw[line width=0.5pt,->]
	(0,2) node {\footnotesize$Z(S_0,\xi_0)$}
	(4,2) node {\footnotesize$Z(S_1,\xi_1)$}
	(2,4.2) node {\footnotesize$Z(\Sigma_a,\varphi_a)$}
	(2,-0.2) node {\footnotesize$Z(\Sigma_b,\varphi_b)$}
	(0,2.6)   \dir{90}{90} (4,2.6) ;
	\draw[line width=0.5pt,->]
	(0,1.4)   \dir{270}{270} (4,1.5)
	;
	\draw (2,3.2) edge[implies] node[left] {\footnotesize$Z(M,\psi')$} (2,0.8);
	\end{tikzpicture}
	\end{array}\end{align}
	of 2-morphisms. We denote by $\HSym(T\text{-}\Cob(n,n-1,n-2),\cat{S})$ the bicategory of $n$-dimensional extended homotopy quantum field theories, i.e.\ the bicategory of homotopy invariant symmetric monoidal functors $T\text{-}\Cob(n,n-1,n-2) \to \cat{S}$. 
\end{definition}

\begin{remarks}\label{bmkexthqft}

\item 	This definition contains the appropriate bicategorical version of the homotopy invariance property in \cite{turaevhqft}. It is made in such a way that we recover the usual homotopy invariance property if we pass from extended homotopy quantum field theories to non-extended ones by restriction to the endomorphisms of the monoidal unit in both domain and codomain.

\item As for non-extended homotopy quantum field theories, the homotopy invariance can be built in by decorating the top-dimensional bordisms with relative homotopy classes of maps rather than actual maps. 
For technical reasons, however, we work with the above definition which requires homotopy invariance as an additional property just as in \cite{turaevhqft}.

\item The symmetric monoidal bicategory $\cat{S}$ which is the codomain of $Z$ will be referred to as the \emph{coefficients} or \emph{coefficient category of $Z$}.

\item The bicategory $\HSym(T\text{-}\Cob(n,n-1,n-2),\cat{S})$ is in fact a 2-groupoid. 

\item Let $Z$ be an $n$-dimensional extended homotopy quantum field theory, $\Sigma : S_0 \to S_1$ a 1-morphism in $\Cob(n,n-1,n-2)$ and $\varphi$ and $\varphi'$ two maps $\Sigma \to T$. Then for any homotopy $\varphi \stackrel{h}{\simeq} \varphi'$  relative $\partial \Sigma$ we obtain a 2-isomorphism $Z(h) : Z(\Sigma,\varphi) \Longrightarrow Z(\Sigma,\varphi')$ by evaluation of $Z$ on $\Sigma \times [0,1]$ equipped with $h$. Note that $Z(h)$ only depends on the equivalence class of the homotopy $h$.  \label{bmkexthqft4}

\end{remarks}

 For two topological spaces $X$ and $Y$ we denote by $Y^X$ the space of maps $X \to Y$ equipped with the compact-open topology. Depending on what is convenient, we can see $X$ and $Y$ and $Y^X$ also as Kan complexes. For any space or Kan complex $Z$ we denote by $\Pi(Z)=\Pi_1(Z)$ and $\Pi_2(Z)$ the fundamental groupoid and the fundamental 2-groupoid, respectively, and also set $\Pi_j(X,Y) := \Pi_j(Y^X)$ for $j=1,2$. We obtain as a straightforward generalization of \cite[Proposition~2.8]{schweigertwoikeofk}:

\begin{proposition}[]\label{satzexttftrep}
	For any extended homotopy quantum field theory $Z: T\text{-}\Cob(n,n-1,n-2) \to \cat{S}$ and any closed oriented $(n-2)$-dimensional manifold $S$ we naturally obtain a representation
	\begin{align} \widehat{Z} (S) := Z(S,?) : \Pi_2(S,T) = \Pi_2\left(T^S\right) &\to \cat{S}\ , \end{align} i.e.\ a 2-functor $\Pi_2\left(T^S\right) \to \cat{S}$ sending $\xi : S \to T$ to $Z(S,\xi)$. Definition on homotopies and homotopies of homotopies is by evaluation on the cylinder $S\times [0,1]$ over $S$ and the cylinder $S\times [0,1]^2$ over the cylinder over $S$.  
\end{proposition}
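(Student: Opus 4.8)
The plan is to define $\widehat{Z}(S)$ on objects, $1$-morphisms, and $2$-morphisms of the fundamental $2$-groupoid $\Pi_2(T^S)$ and then check the coherence data, reusing the machinery of the bordism bicategory from Definition~\ref{defmscbordcattarget}. On objects, a point $\xi \in T^S$ is a map $\xi : S \to T$, and we set $\widehat{Z}(S)(\xi) := Z(S,\xi)$, which makes sense because $(S,\xi)$ is a $0$-cell of $T\text{-}\Cob(n,n-1,n-2)$. On $1$-morphisms, a morphism $\xi \to \xi'$ in $\Pi_2(T^S) = \Pi_1(\Pi_2(T^S))$ --- more precisely an edge of the path space, i.e.\ a homotopy class rel nothing of a homotopy $h : \xi \simeq \xi'$, i.e.\ a map $S \times [0,1] \to T$ --- gives a $1$-morphism $(S \times [0,1], h) : (S,\xi) \to (S,\xi')$ in the bordism bicategory (the cylinder on $S$ decorated with $h$), and we set $\widehat{Z}(S)(h) := Z(S\times[0,1], h)$. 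On $2$-morphisms, a homotopy of homotopies $H : h \simeq h'$ rel $\partial([0,1])$ is a map $S \times [0,1]^2 \to T$, which decorates the cylinder $(S\times[0,1])\times[0,1]$ over the previous cylinder, yielding a $2$-morphism in $T\text{-}\Cob(n,n-1,n-2)$; applying $Z$ gives $\widehat{Z}(S)(H)$. The homotopy invariance property of $Z$ is exactly what guarantees that this depends only on the class of $H$ rel boundary, so that the assignment descends to $\Pi_2(T^S)$.

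Next I would verify functoriality. Composition of $1$-morphisms in $\Pi_2(T^S)$ is concatenation of homotopies; under the identification above this corresponds, up to reparametrization of the interval, to gluing cylinders along collars in $T\text{-}\Cob(n,n-1,n-2)$. Since $Z$ is a functor, $Z$ applied to the glued cylinder is (canonically $2$-isomorphic to) the composite of the $Z$-values of the pieces; the reparametrization discrepancy and the associativity/unit issues are absorbed into the compositor and unitor $2$-cells of the functor $\widehat{Z}(S)$, which one reads off from the compositor/unitor of $Z$ together with the chosen interval diffeomorphisms used in Definition~\ref{defmscbordcattarget} for vertical/horizontal composition. Horizontal and vertical composition of $2$-cells in $\Pi_2(T^S)$ similarly correspond to the two ways of gluing the iterated cylinders, and functoriality of $Z$ on $2$-morphisms transfers. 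Invertibility of all these cells (so that the target really is a $2$-functor out of a $2$-groupoid into $\cat{S}$, consistent with Remarks~\ref{bmkexthqft} that $\HSym(\dots)$ is a $2$-groupoid) follows because cylinders are invertible $1$-morphisms in $T\text{-}\Cob$ and $Z$ preserves invertibility.

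Finally I would address naturality of $S \mapsto \widehat{Z}(S)$: a morphism should be functorial in $S$ in the appropriate sense (e.g.\ pullback along diffeomorphisms of $S$, or more precisely it is a statement that the construction is natural with respect to the relevant structure), which reduces to the fact that a diffeomorphism $f : S \to S'$ induces, via Remark~\ref{bmkextcobdiffeo} applied levelwise to cylinders, compatible equivalences $Z(S,\xi) \simeq Z(S', f_*\xi)$ intertwining the cylinder-evaluations; chasing the mapping-cylinder $2$-cells gives the coherence. The whole proof is, as the statement says, a ``straightforward generalization of \cite[Proposition~2.8]{schweigertwoikeofk}'': the only genuinely new bookkeeping is one categorical level up, namely tracking the $2$-morphism layer (homotopies of homotopies $\leftrightarrow$ cylinders over cylinders) and checking that the interval-rescaling diffeomorphisms fixed in Definition~\ref{defmscbordcattarget} make the compositors and the interchange coherence hold on the nose after applying $Z$. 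I expect the main obstacle to be precisely this coherence bookkeeping --- verifying that the associator/unitor/interchange diagrams for $\widehat{Z}(S)$ commute, given that composition of cylinders only matches composition of homotopies after reparametrization --- rather than any conceptual difficulty; it is tedious but forced by the definitions, and the homotopy invariance of $Z$ is the key lemma that makes every reparametrization ambiguity immaterial.
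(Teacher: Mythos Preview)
Your proposal is correct and matches the paper's approach: the paper does not give an explicit proof of this proposition at all, merely stating it as ``a straightforward generalization of \cite[Proposition~2.8]{schweigertwoikeofk}'', and your sketch is precisely the natural way to carry out that generalization (evaluate on cylinders and iterated cylinders, use the functoriality of $Z$ for compositors, and invoke the homotopy invariance axiom to make the 2-cell assignment well-defined on classes). One small wording issue: a 1-morphism in $\Pi_2(T^S)$ is an actual path (i.e.\ an actual homotopy $h:\xi\simeq\xi'$), not a ``homotopy class rel nothing''; only at the 2-morphism level do you pass to classes, and your parenthetical ``$\Pi_2(T^S)=\Pi_1(\Pi_2(T^S))$'' should be dropped.
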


\subsection{Aspherical targets: Extended equivariant topological field theories}
Specifying for the target space the classifying space of a (finite) group leads to equivariant topological field theories, see also \cite{turaevhqft} for non-extended case. We can now provide the following analogue in the extended case:

\begin{definition}[Extended equivariant topological field theory]
	For a group $G$ let us set \begin{align}G\text{-}\Cob(n,n-1,n-2) := BG\text{-}\Cob(n,n-1,n-2)\end{align} for the classifying space $BG$ of $G$. An \emph{$n$-dimensional extended $G$-equivariant topological field theories with values in a symmetric bicategory $\cat{S}$} is a homotopy quantum field theory $Z : G\text{-}\Cob(n,n-1,n-2) \to \cat{S}$
	with target space $BG$ and values in $\cat{S}$.
\end{definition}

\begin{remarks}
	\item A $G$-equivariant topological field theory assigns data to manifolds decorated with maps to $BG$. Homotopy classes of such maps correspond to isomorphism classes of principal $G$-bundles, and in \cite[Remark~2.3 (d)]{schweigertwoikeofk} it is explained that this identification extends to groupoids, see also Lemma~\ref{lemmamappingspacebundles} below.
	\item A class of examples of extended $G$-equivariant topological field theories is constructed in \cite{maiernikolausschweigerteq}. 
\end{remarks}

In the sequel, it will be crucial to know the following basic fact about mapping spaces with aspherical target space, i.e.\ classifying space of a group (or more generally a groupoid):

\begin{lemma}[]\label{lemmamappingspacebundles}
	Let $\Gamma$ be a groupoid. For any space $X$ the mapping space $B\Gamma^X$ is equivalent to the nerve of the functor groupoid $[\Pi (X),\Gamma]$.
	In particular, for every (discrete) group $G$ and every manifold $M$ (with boundary or corners) the space $BG^M$ is equivalent to the nerve $B\PBun_G(M)$ of the groupoid $\PBun_G(M)$ of $G$-bundles over $M$.
\end{lemma}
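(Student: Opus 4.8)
The plan is to translate the statement into simplicial sets and then to exploit that $B\Gamma$ is a $1$-type. First I would observe that $B\Gamma^X$ has a weak homotopy type that is unchanged under weak equivalences in $X$ (because $B\Gamma$ is a Kan complex), and that the nerve of $[\Pi(X),\Gamma]$ depends only on the equivalence class of the groupoid $\Pi(X)$, hence only on the $1$-type of $X$; so one may replace $X$ by its singular complex and compute $B\Gamma^X$ as the simplicial internal hom $\underline{\Hom}_{\sSet}(X,B\Gamma)$, with $B\Gamma$ now the (Kan) nerve. Then I would record the two formal inputs: (i) the fundamental groupoid functor $\Pi\colon\sSet\to\Grpd$ is left adjoint to the nerve $B\colon\Grpd\to\sSet$, with unit $\eta_X\colon X\to B\Pi(X)$; and (ii) since the nerve (of categories, restricted to groupoids) is fully faithful and product-preserving and $\Delta^k$ is the nerve of the poset $[k]$, there is a natural isomorphism $\underline{\Hom}_{\sSet}(B\Delta,B\Gamma)\cong B[\Delta,\Gamma]$ for all groupoids $\Delta,\Gamma$, where $[\Delta,\Gamma]$ is the functor groupoid (automatically a groupoid, since $\Gamma$ is). Granting these, the lemma reduces to showing that $\eta_X^\ast\colon\underline{\Hom}_{\sSet}(B\Pi(X),B\Gamma)\to\underline{\Hom}_{\sSet}(X,B\Gamma)=B\Gamma^X$ is a weak equivalence.

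For this last point the decisive fact is that $B\Gamma$ is $1$-truncated while $\eta_X$ is an isomorphism on $\pi_0$ and on all fundamental groups (the counit of $\Pi\dashv B$ is invertible because the nerve is fully faithful); hence $\eta_X$ is, up to weak equivalence under $X$, the Postnikov $1$-truncation and can be realized by attaching to $X$ only cells of dimension $\ge 3$. Now for every $n\ge 3$ the restriction map $\underline{\Hom}(\Delta^n,B\Gamma)\to\underline{\Hom}(\partial\Delta^n,B\Gamma)$ is a trivial fibration: it is a fibration by compatibility of the internal hom with cofibrations and the fibrancy of $B\Gamma$, and a weak equivalence because both sides are weakly equivalent to $B\Gamma$ via evaluation, the homotopy fibre of $\underline{\Hom}(\partial\Delta^n,B\Gamma)\to B\Gamma$ being $\Omega^{n-1}B\Gamma$, which is weakly contractible as $\pi_{\ge 2}(B\Gamma)=0$ and $n-1\ge 2$. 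Consequently attaching such a cell does not change $\underline{\Hom}(-,B\Gamma)$ up to weak equivalence; passing to the (transfinite) colimit over the skeletal filtration gives that $\eta_X^\ast$ is a weak equivalence, and combining the steps yields $B\Gamma^X\simeq\underline{\Hom}_{\sSet}(B\Pi(X),B\Gamma)\cong B[\Pi(X),\Gamma]$. Alternatively, this whole step can be summarized by the classical fact that $\Grpd$ with categorical equivalences presents the $\infty$-category of $1$-truncated spaces via the reflective inclusion $B$ with left adjoint $\tau_{\le 1}$, and that mapping spaces into a reflective subcategory are computed after truncation; I would isolate this as a citable statement rather than reprove it.

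For the stated consequence, take $\Gamma=G$ as a one-object groupoid, so $B\Gamma=BG$ and $[\Pi(M),G]$ is the groupoid of functors $\Pi(M)\to G$ with their natural transformations, i.e.\ the groupoid of $G$-local systems on $M$. When $G$ is discrete and $M$ is a manifold (possibly with boundary or corners, hence locally path-connected and semi-locally simply connected), covering space theory provides an equivalence of groupoids $\PBun_G(M)\xrightarrow{\ \sim\ }[\Pi(M),G]$ by monodromy: a principal $G$-bundle with discrete fibre is a covering carrying a free fibrewise $G$-action, it has its canonical flat structure, and it is determined up to isomorphism by the monodromy functor. Hence $BG^M\simeq B\PBun_G(M)$.

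The main obstacle, I expect, is precisely the second paragraph: upgrading the adjunction $\Pi\dashv B$ from an equivalence of homotopy categories (groupoids versus $1$-types) to an equivalence of mapping \emph{spaces} when the target is a $1$-type. The cell-by-cell argument sketched above keeps the machinery light, but it requires some care in commuting $\underline{\Hom}(-,B\Gamma)$ past the skeletal colimit and in verifying that the resulting tower of trivial fibrations has a weakly equivalent limit; all of this is routine but should be pinned down with a precise reference.
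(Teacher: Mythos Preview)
Your argument is correct, but the paper takes a much shorter route. Instead of analyzing $\eta_X^*$ as a map of simplicial mapping spaces, the paper varies a test Kan complex $Y$ and runs a Yoneda argument: using the internal-hom adjunction, the adjunction $\Pi\dashv B$ twice, and the fact that $\Pi$ preserves finite products, one obtains a natural bijection
\[
\Hom(Y,B\Gamma^X)\cong\Hom(Y\times X,B\Gamma)\cong\Hom_{\Grpd}(\Pi(Y)\times\Pi(X),\Gamma)\cong\Hom(Y,B[\Pi(X),\Gamma]),
\]
and Yoneda then yields an \emph{isomorphism} $B\Gamma^X\cong B[\Pi(X),\Gamma]$ of simplicial sets, not merely a weak equivalence. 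So the very obstacle you flag in your final paragraph---upgrading $\Pi\dashv B$ from hom-sets to mapping \emph{spaces}---dissolves once you let the test object $Y$ do the work: testing against all $Y$ is precisely what promotes the adjunction to the enriched level. Your cell-attachment/Postnikov argument is more overtly homotopy-theoretic and explains \emph{why} the result holds (as an instance of $1$-truncation), and it would generalize, say, to $n$-groupoids and $n$-types; but for the lemma at hand the paper's three-line formal manipulation avoids all the model-categorical bookkeeping you were worried about. The treatment of the bundle statement via monodromy is essentially the same in both.
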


\begin{proof}
	We can see $X$ as a Kan complex. Since the fundamental groupoid functor $\Pi : \Kan \to \Grpd$ from the category of Kan complexes to the category of groupoids is left-adjoint to the nerve functor $B: \Grpd \to \Kan$, we find for any Kan complex $X$
	\begin{align}
	\Hom_\Kan (Y, B\Gamma^X) &\cong \Hom_\Kan (Y \times X,B\Gamma) \\&\cong \Hom_\Grpd (\Pi(Y\times X),\Gamma)\\& \cong \Hom_\Grpd(\Pi(Y) \times \Pi(X),\Gamma) \\&\cong \Hom_\Grpd(\Pi(Y) , [\Pi(X),\Gamma]) \\ &\cong \Hom_\Kan (Y, B [\Pi(X),\Gamma])\ .
	\end{align} The Yoneda Lemma implies that $B\Gamma^X$ is isomorphic to the nerve $B [\Pi(X),\Gamma]$ of the groupoid of functors from $\Pi(X)$ to $\Gamma$.    
	The additional statement involving the groupoid of bundles now follows from the holonomy description of bundles, i.e.\ the fact that for any manifold $M$ (with boundary or corners) the groupoid $\PBun_G(M)$ is equivalent to $[\Pi(M),\star //G]$.  
\end{proof}

\begin{remark}
This result says that for an extended $G$-equivariant topological field theory $Z : G\text{-}\Cob(n,n-1,n-2) \to \cat{S}$ the representation $Z(S,?) : \Pi_2\left(S,BG\right) \to \cat{S}$ from Proposition~\ref{satzexttftrep} can and will be treated as a representation of the groupoid $\Pi(S,BG)$
or rather as a \emph{2-vector bundle over} $\Pi(S,BG)$ in the sense of \cite[Definition~2.6]{swpar}. This will turn out to be a tremendous simplification.\label{bmksimpaspherical}
\end{remark}

\begin{example}[The cover functor]\label{extotalspacefunctor}
	For a finite group $G$ there is a canonical symmetric monoidal functor
	\begin{align}
	\operatorname{Cov} : G\text{-}\Cob(n,n-1,n-2) \to \Cob(n,n-1,n-2),
	\end{align} the so-called \emph{cover functor}, which is studied in \cite{barmeierschweigert} and defined as follows: For a closed oriented $(n-2)$-dimensional manifold $S$ with a map $\xi : S \to BG$ we take the pullback bundle $\xi ^* EG \to S$. This $G$-bundle is a covering map and by \cite[Proposition~4.40 and 15.35]{lee} the total space $\xi^* EG$ inherits the structure of a closed oriented manifold of dimension $n-2$. The assignment $\operatorname{Cov}(S,\xi) := \xi^* EG$ extends to a symmetric monoidal functor. 
	If we are given an extended topological field theory $Z: \Cob(n,n-1,n-2) \to \TwoVect$, its pullback $\operatorname{Cov}^* Z$ along the cover functor is a $G$-equivariant topological field theory. This provides an important class of examples of $G$-equivariant field theories. In Example~\ref{expermutationorbifolds} we will use the cover functor to formalize the idea of the permutation orbifolds appearing in \cite{fks, bantay98,bantay02}. 
	\end{example}

\section{The orbifold theory of an extended equivariant topological field theory}
In this section we set up the orbifold construction for extended equivariant topological field theories. We only consider equivariant topological field theories with coefficients $\TwoVect$, the symmetric monoidal bicategory of 2-vector spaces, see e.g.\ \cite{mortonvec} or \cite[Example 2.8]{swpar}.
Its objects are 2-vector spaces, i.e.\ additive $\mathbb{C}$-linear finitely semisimple categories, its 1-morphisms are linear functors and its 2-morphisms are natural transformations.

\subsection{Change to equivariant coefficients\label{equivariantcoeffsec}}
Given a $G$-equivariant topological field theory $Z: G\text{-}\Cob(n,n-1,n-2) \to \TwoVect$ we will produce an ordinary topological field theory $\widehat{Z} : \Cob(n,n-1,n-2) \to \TRepGrpd$ which has coefficients in the symmetric monoidal bicategory $\TRepGrpd$ which is defined in detail in \cite[Section~4.2]{swpar}, see also \cite{haugseng} for related concepts, and whose definition we now recall briefly:

\begin{itemize}

\item[(0)] Objects are pairs $(\Gamma,\varrho)$, where $\Gamma$ is an essentially finite groupoid and $\varrho$ a 2-vector bundle over $\Gamma$, i.e.\ a representation $\varrho : \Gamma \to \TwoVect$. 

 \item[(1)] A 1-morphism $(\Gamma_0,\varrho_0) \to (\Gamma_1,\varrho_1)$ is a span
 		\begin{align} \Gamma_0 \stackrel{r_0}{\longleftarrow} \Lambda \stackrel{r_1}{\to} \Gamma_1 \end{align} of essentially finite groupoids and an intertwiner
 		\begin{align} \lambda : r_0^* \varrho_0 \to r_1^* \varrho_1\ .\end{align} 

\item[(2)] A 2-morphism between the 1-morphisms $(\Gamma_0,\varrho_0) \stackrel{r_0}{\longleftarrow} (\Lambda,\lambda) \stackrel{r_1}{\to} (\Gamma_1,\varrho_1)$ and $(\Gamma_0,\varrho_0) \stackrel{r_0'}{\longleftarrow} (\Lambda',\lambda') \stackrel{r_1'}{\to} (\Gamma_1,\varrho_1)$ is an equivalence of class  of 

\begin{itemize}
			\item a span of spans, i.e.\ a diagram
			\begin{center}
				\begin{tikzpicture}[scale=2,     implies/.style={double,double equal sign distance,-implies},
				dot/.style={shape=circle,fill=black,minimum size=2pt,
					inner sep=0pt,outer sep=2pt},]
				\node (A1) at (0,0) {$\Gamma_0$};
				\node (A2) at (2,1) {$\Lambda$};
				\node (A3) at (4,0) {$\Gamma_1$};
				\node (B2) at (2,-1) {$\Lambda'$};
				\node (C) at (2,0) {$\Omega$};
				\node (B1) at (1,-0.5) {$$};
				\node (B3) at (2,-1) {$$};
				\path[->,font=\scriptsize]
				(A2) edge node[above]{$r_0$} (A1)
				(A2) edge node[above]{$r_1$} (A3)
				(B2) edge node[below]{$r_0'$} (A1)
				(B2) edge node[below]{$r_1'$} (A3)
				(C) edge node[right]{$t'$} (B2)
				(C) edge node[right]{$t$} (A2);
				\draw (1.5,0) edge[implies] node[above] {\scriptsize$\alpha_0$} (0.5,0);
				\draw (2.5,0) edge[implies] node[above] {\scriptsize$\alpha_1$} (3.5,0);
				\end{tikzpicture}
			\end{center} in essentially finite groupoids commutative up to the indicated natural isomorphisms

						\item together with a natural morphism
						\begin{center}
							\begin{tikzpicture}[scale=1.5,     implies/.style={double,double equal sign distance,-implies},
							dot/.style={shape=circle,fill=black,minimum size=2pt,
								inner sep=0pt,outer sep=2pt},]
							\node (A1) at (0,1) {$(r_0t)^* \varrho_0=t^* r_0^* \varrho_0$};
							\node (A2) at (4,1) {$t^* r_1^* \varrho_1 = (r_1t)^* \varrho_1$};
							\node (B1) at (0,0) {$(r_0't')^* \varrho_0 = {t'}^* {r_0'}^* \varrho_0$};
							\node (B2) at (4,0) {${t'}^* {r_1'}^* \varrho_1 = (r_1't')^* \varrho_1$};
							\path[->,font=\scriptsize]
							(A1) edge node[above]{$t^* \lambda$} (A2)
							(A1) edge node[left]{$\varrho_0({\alpha_0})$} (B1)
							(B1) edge node[above]{${t'}^* \lambda'$} (B2)
							(A2) edge node[right]{$\varrho_1({\alpha_1})$} (B2);
							\draw (A2) edge[implies] node[above] {\scriptsize$\omega$} (B1);
							\end{tikzpicture}
						\end{center}

\end{itemize} of intertwiners.
\end{itemize}
We will refer to these coefficients as \emph{equivariant coefficients}. 

The following result generalizes the constructions in \cite[Section~3.3]{schweigertwoikeofk}:

\begin{theorem}[]\label{thmhatconstruction}
	For any finite group $G$ the assignment $Z\mapsto \widehat{Z}$ from Proposition~\ref{satzexttftrep} naturally extends to a functor\small 
	\begin{align}\label{changetoequivcoeffeqn}
		\widehat{?} : \HSym (G\text{-}\Cob(n,n-1,n-2),\TwoVect) \to \Sym(\Cob(n,n-1,n-2), \TRepGrpd)
	\end{align} \normalsize
	assigning to an extended $G$-equivariant topological field theory
	an extended topological field theory 
	with values in $\TRepGrpd$. We call \eqref{changetoequivcoeffeqn} the change to equivariant coefficients. 
\end{theorem}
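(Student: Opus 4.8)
The plan is to read off the definition of $\widehat Z$ on cells from Proposition~\ref{satzexttftrep}, Lemma~\ref{lemmamappingspacebundles} and Remark~\ref{bmksimpaspherical}, and then to verify by hand that it is a symmetric monoidal $2$-functor, functorially in $Z$. On objects I would set $\widehat Z(S) := \bigl(\PBun_G(S),\,Z(S,?)\bigr)$: by Lemma~\ref{lemmamappingspacebundles} the mapping space $BG^S$ is the nerve of $\PBun_G(S)$, and since $BG$ is aspherical $\Pi_2(S,BG)\simeq\PBun_G(S)$, so the $2$-functor $Z(S,?)$ of Proposition~\ref{satzexttftrep} is a genuine $2$-vector bundle over the essentially finite groupoid $\PBun_G(S)$, i.e.\ an object of $\TRepGrpd$. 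For a bordism $\Sigma:S_0\to S_1$ I would take the span of restriction functors
\begin{align}
\PBun_G(S_0) \xleftarrow{\ r_0\ } \PBun_G(\Sigma) \xrightarrow{\ r_1\ } \PBun_G(S_1),
\end{align}
together with the intertwiner $\lambda_\Sigma : r_0^*Z(S_0,?)\to r_1^*Z(S_1,?)$ whose component at a bundle $P\in\PBun_G(\Sigma)$, represented by a map $\varphi_P:\Sigma\to BG$, is the linear functor $Z(\Sigma,\varphi_P)$; naturality of $\lambda_\Sigma$ in $P$ comes from evaluating $Z$ on cylinders over $\Sigma$. For a $2$-cell $M:\Sigma\to\Sigma'$ I would take the span of spans given by the restriction functors out of $\PBun_G(M)$, with the natural isomorphisms $\alpha_0,\alpha_1$ induced by homotopies between restrictions, and the natural $2$-morphism $\omega$ of intertwiners whose component at $P\in\PBun_G(M)$ is $Z(M,\psi_P)$.

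Next I would check well-definedness and $2$-functoriality. Independence of the representative $(M,\psi)$ up to diffeomorphism and of $\psi$ up to homotopy rel $\partial M$ is exactly the homotopy invariance property of $Z$ together with functoriality of $Z$ on diffeomorphisms (Remarks~\ref{bmkextcob}). The one geometric input needed for composition is a van Kampen statement: for bordisms glued along a collar of the shared object, $\PBun_G$ of the composite is a (weak) pullback of the $\PBun_G$'s of the pieces over $\PBun_G$ of the shared object, because the collar deformation-retracts onto it. This identifies the composite of the two spans --- formed in $\TRepGrpd$ via a chosen pullback --- with the span assigned to the glued bordism, up to a canonical equivalence. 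The compositor $2$-cell of $\widehat Z$ is then assembled from the compositor of the weak $2$-functor $Z$ and this canonical equivalence, and the pentagon and triangle identities for $\widehat Z$ reduce to those for $Z$ together with the coherence of pullbacks of groupoids. Identities are handled by observing that $\widehat Z$ sends a cylinder decorated with the constant homotopy to the identity span with the identity intertwiner, up to the unitor of $Z$. The same scheme --- pullback of bundle groupoids plus functoriality of $Z$ one level up --- governs vertical and horizontal composition of $2$-cells and the interchange law.

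For the symmetric monoidal structure I would use that $\PBun_G(-)$ turns disjoint unions into products, $\PBun_G(S\sqcup S')\simeq\PBun_G(S)\times\PBun_G(S')$ and likewise for bordisms and bordisms with corners, compatibly with the product monoidal structure on $\TRepGrpd$; the monoidal constraints, the symmetry and their coherences for $\widehat Z$ are then transported from those of $Z$. Finally, for naturality in $Z$: a symmetric monoidal transformation $\eta:Z\Rightarrow Z'$ yields for each $S$ an intertwiner $\eta_S:Z(S,?)\to Z'(S,?)$ over $\PBun_G(S)$, hence a $1$-morphism $\widehat Z(S)\to\widehat{Z'}(S)$ in $\TRepGrpd$ supported on the identity span, and naturality of $\eta$ on bordisms (resp.\ bordisms with corners) makes these compatible with the spans-with-intertwiners (resp.\ spans of spans) assigned by $\widehat Z$ and $\widehat{Z'}$; modifications are treated in the same way one categorical level higher. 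Assembling everything produces the functor \eqref{changetoequivcoeffeqn}.

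I expect the main obstacle to be purely organizational rather than conceptual: because $Z$ is only a \emph{weak} $2$-functor and composition in $\TRepGrpd$ is defined through non-strict pullbacks, threading together all the coherence data --- compositors and unitors, the $\alpha_i$ and $\omega$ cells, the monoidal constraints, the symmetry and the interchange law --- and checking the resulting pentagons, triangles and naturality squares is lengthy. Each such verification, however, factors through a corresponding coherence for $Z$ and the universal property of homotopy pullbacks of essentially finite groupoids, so no new idea beyond the non-extended construction of \cite[Section~3.3]{schweigertwoikeofk} is required. One additional point of care is that the paper does not assume compatible collars, so when identifying $\PBun_G$ of a glued bordism with a pullback one must absorb the resulting gluing ambiguity into the homotopy invariance of $Z$.
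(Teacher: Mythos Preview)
Your proposal is correct and follows essentially the same approach as the paper: define $\widehat Z$ on cells via restriction spans and evaluation of $Z$, then use the gluing (van Kampen/homotopy pullback) property of $\Pi(?,BG)\simeq\PBun_G(?)$ together with the symmetric monoidal structure of $Z$ to obtain functoriality and monoidality. The one place where the paper is more explicit than your outline is the construction of the coherence isomorphisms making $Z(\Sigma,?)$ an intertwiner: rather than simply ``evaluating $Z$ on cylinders over $\Sigma$'', the paper writes down a specific homotopy $\widehat h$ on $(S_1\times I)\circ\Sigma\circ(S_0\times I)$ interpolating between $h|_{S_1}\cup\varphi\cup\id$ and $\id\cup\varphi'\cup h|_{S_0}$ and obtains the square from $Z(\widehat h)$, and similarly verifies directly (via homotopy invariance) that $Z(M,?)$ is a map of intertwiners; you should be prepared to supply this level of detail, but no new idea beyond what you sketched is needed.
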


\begin{proof}
	In the first step, we specify all the data needed to define $\widehat{Z}$ for an extended $G$-equivariant topological field theory $Z: G\text{-}\Cob(n,n-1,n-2) \to \TwoVect$:
	\begin{xenumerate}
		
		\item[(0)] To an object $S$ in $\Cob(n,n-1,n-2)$ we assign the 2-vector bundle $\widehat{Z}(S) : \Pi( S,BG) \to \TwoVect$ from Proposition~\ref{satzexttftrep} taking into account Remark~\ref{bmksimpaspherical}.
		
		\item[(1)] To a 1-morphism $\Sigma : S_0 \to S_1$ in $\Cob(n,n-1,n-2)$ we assign the span
		\begin{align}
			\Pi(S_0,BG) \stackrel{r_0}{\longleftarrow} \Pi(\Sigma,BG) \stackrel{r_1}{\to} \Pi(S_1,BG)
		\end{align} ($r_0$ and $r_1$ are the obvious restriction functors) and the intertwiner
		\begin{align}
			Z(\Sigma,?) : r_0^* \widehat{Z}(S_0) \to r_1^* \widehat{Z}(S_1)\label{eqnintertwinerZ} 
		\end{align} consisting of the map $Z(\Sigma,\varphi) : Z(S_0,\varphi|_{S_0}) \to  Z(S_1,\varphi|_{S_1})$ for each map $\varphi : \Sigma \to BG$ and natural isomorphisms
		\begin{align}\label{eqnatisohat}
		\begin{array}{c}	\begin{tikzpicture}[scale=2, implies/.style={double,double equal sign distance,-implies},
			dot/.style={shape=circle,fill=black,minimum size=2pt,
				inner sep=0pt,outer sep=2pt},]
			\node (A1) at (0,1) {$Z(S_0,\varphi|_{S_0})$};
			\node (A2) at (3,1) {$Z(S_1,\varphi|_{S_1})$};
			\node (B1) at (0,0) {$Z(S_0,\varphi'|_{S_0})$};
			\node (B2) at (3,0) {$Z(S_1,\varphi'|_{S_1})$};
			\path[->,font=\scriptsize]
			(A1) edge node[above]{$Z(\Sigma,\varphi)$} (A2)
			(A1) edge node[left]{$Z(S_0 \times I,h|_{S_0})$} (B1)
			(A2) edge node[right]{$Z(S_1 \times I,h|_{S_1})$} (B2)
			(B1) edge node[below]{$Z(\Sigma,\varphi')$} (B2);
			\draw (A2) edge[implies] node[above] {\scriptsize$\cong$} (B1);
			\end{tikzpicture}\end{array}
		\end{align} for every equivalence class $\varphi \stackrel{h}{\simeq}\varphi'$ of homotopies between maps $\varphi,\varphi' : \Sigma \to BG$.  
		These isomorphisms \eqref{eqnatisohat} are obtained as follows:
		First we will define an invertible 2-morphism\small
				\begin{align} ((S_1 \times I) \circ \Sigma \circ (S_0 \times I),  h|_{S_1} \cup \varphi \cup \id_{\varphi_0|_{S_0}}  )   \stackrel{\widehat{h}}{\Longrightarrow}   ((S_1 \times I) \circ \Sigma \circ (S_0 \times I), \id_{\varphi'|_{S_1}} \cup \varphi' \cup h|_{S_0}  ) \label{eqnhhat}
				\end{align} 
				\normalsize
				in $G\text{-}\Cob(n,n-1,n-2)$ and use it together with the functoriality of $Z$ to obtain the isomorphisms \eqref{eqnatisohat} as 
	\begin{align} Z(S_1 \times I,h|_{S_1}) \circ Z(\Sigma,\varphi) &\stackrel{\phantom{Z(\widehat{h})}}{\cong} Z(S_1 \times I,h|_{S_1}) \circ Z(\Sigma,\varphi) \circ Z(S_0 \times I,\id_{\varphi_0|_{S_0}})\\
			&\stackrel{\phantom{Z(\widehat{h})}}{\cong} Z((S_1 \times I) \circ \Sigma \circ (S_0 \times I),  h|_{S_1}  \cup \varphi \cup \id_{\varphi_0|_{S_0}} )\\
			&\stackrel{Z(\widehat{h})}{\cong} Z((S_1 \times I) \circ \Sigma \circ (S_0 \times I), \id_{\varphi'|_{S_1}} \cup \varphi' \cup h|_{S_0}  )\\
				&\stackrel{\phantom{Z(\widehat{h})}}{\cong}  Z(S_1 \times I,\id_{\varphi'|_{S_1}} ) \circ Z(\Sigma,\varphi') \circ Z(S_0 \times I,h|_{S_0})\\
					&\stackrel{\phantom{Z(\widehat{h})}}{\cong}  Z(S_1 \times I,\id_{\varphi'|_{S_1}} ) \circ Z(\Sigma,\varphi')\ .
		\end{align} 
		The needed 2-isomorphism \eqref{eqnhhat} will be obtained as a homotopy 
		\begin{align}
	h|_{S_1} \cup \varphi \cup \id_{\varphi_0|_{S_0}} \stackrel{\widehat{h}}{\simeq}	\id_{\varphi'|_{S_1}} \cup \varphi' \cup h|_{S_0}  : (S_1 \times I) \circ \Sigma \circ (S_0 \times I) \to BG
		\end{align} relative boundary, see also Remark~\ref{bmkexthqft}, \ref{bmkexthqft4} for this strategy. 
		For the definition of this homotopy we note that $h$ gives rise to homotopies
		\begin{align}
			\varphi|_{S_0} &\stackrel{h}{\simeq} h_t|_{S_0}\ ,\\
			h_t|_{S_1} &\stackrel{h}{\simeq} \varphi'|_{S_1}
		\end{align} for all $t\in I$, which by abuse of notation we just denote by $h$ again. Now the map $\widehat{h}_t : (S_0 \times I) \circ \Sigma \circ (S_1 \times I) \to BG$ is defined by gluing together $h_t$ and these two auxiliary homotopies as indicated in the picture
		\begin{center}
			\vspace*{0.5cm}
			\includegraphics[width=0.7\textwidth]{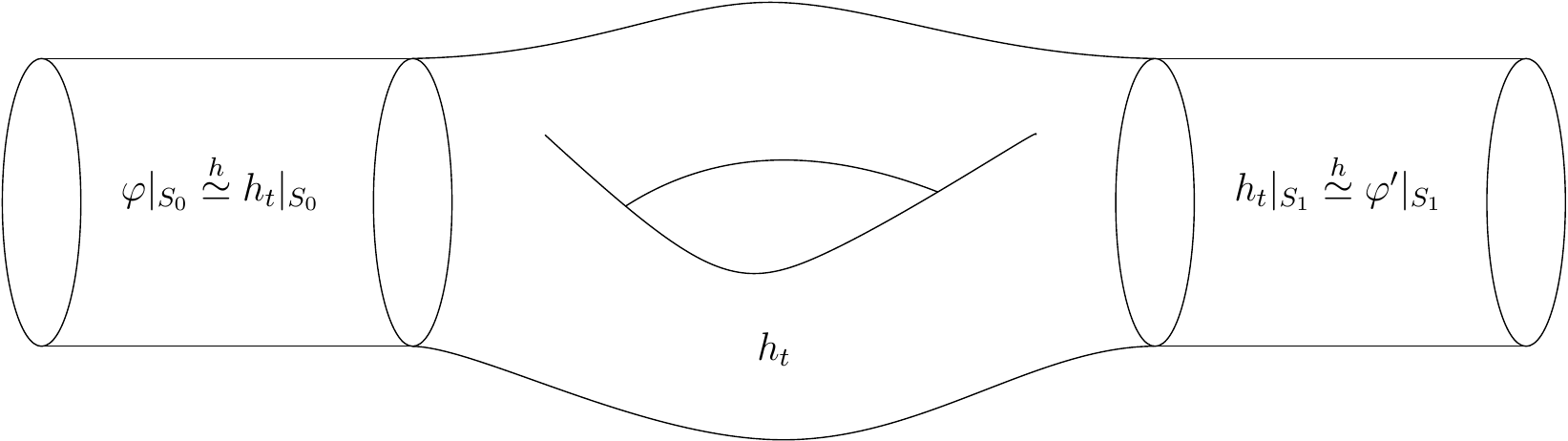},
			\vspace*{0.5cm}
		\end{center} in which we see $\Sigma$ with the cylinders over $S_0$ and $S_1$, respectively, glued to it. 
	A direct computation shows that the isomorphisms \eqref{eqnatisohat} are coherent.
		
		\item[(2)] To a 2-morphism $M: \Sigma_a \Longrightarrow \Sigma_b$ between 1-morphisms $\Sigma_a , \Sigma_b : S_0 \to S_1$ the functor $\widehat{Z}$ assigns the strict span of spans \begin{center}
			\begin{tikzpicture}[scale=2,     implies/.style={double,double equal sign distance,-implies},
			dot/.style={shape=circle,fill=black,minimum size=2pt,
				inner sep=0pt,outer sep=2pt},]
			\node (A1) at (0,0) {$\Pi(S_0,BG)$};
			\node (A2) at (2,1) {$\Pi(\Sigma,BG)$};
			\node (A3) at (4,0) {$\Pi(S_1,BG)$};
			\node (B2) at (2,-1) {$\Pi(\Sigma',BG)$};
			\node (C) at (2,0) {$\Pi(M,BG)$};
			\node (B1) at (1,-0.5) {$$};
			\node (B3) at (2,-1) {$$};
			\path[->,font=\scriptsize]
			(A2) edge node[above]{$r_0$} (A1)
			(A2) edge node[above]{$r_1$} (A3)
			(B2) edge node[below]{$r_0'$} (A1)
			(B2) edge node[below]{$r_1'$} (A3)
			(C) edge node[right]{$t'$} (B2)
			(C) edge node[right]{$t$} (A2);
			\end{tikzpicture}
		\end{center} coming from restriction of maps together with the map
		\begin{align} Z(M,?) : t^* Z(\Sigma,?) \to {t'}^* Z(\Sigma',?) \end{align} of intertwiners coming from evaluation of $Z$ on maps $M \to BG$. For this to be really a map of intertwiners we need to verify the condition given in \cite[Remark~2.7,~3.]{swpar}. Combining this with \cite[Remark~2.7,~2.]{swpar} we see that we need to prove that for every equivalence class $\psi \stackrel{h}{\simeq} \psi' :M \to BG$ of homotopies the 2-cell \small
		\begin{align}\begin{array}{c}
			\begin{tikzpicture}[scale=1.5, implies/.style={double,double equal sign distance,-implies},
			dot/.style={shape=circle,fill=black,minimum size=2pt,
				inner sep=0pt,outer sep=2pt},]
			\node (A1) at (0,2) {$Z(S_0,\psi'|_{S_0})$};
			\node (A2) at (3,2) {$Z(S_1,\psi'|_{S_1})$};
			\node (B1) at (0,0) {$Z(S_0,\psi|_{S_0})$};
			\node (B2) at (3,0) {$Z(S_1,\psi|_{S_1})$};
			\node (b1) at (0.2,-0.1) {};
			\node (b2) at (2.8,-0.1) {};
			\node (d1) at (0.2,0.1) {};
			\node (d2) at (2.8,0.1) {};
			\node (C1) at (0,-2) {$Z(S_0,\psi'|_{S_0})$};
			\node (C2) at (3,-2) {$Z(S_1,\psi'|_{S_1})$};
			\node (I1) at (1.5,1.7) {$\cong$};
			\node (I2) at (1.5,-1.7) {$\cong$};
			\node (I3) at (-1,0) {$\cong$};
			\node (I4) at (4,0) {$\cong$};
			\node (I5) at (-2.5,0) {$\id_{Z(S_0,\psi'|_{S_0})}$};
			\node (I6) at (5.5,0) {$\id_{Z(S_1,\psi'|_{S_1})}$};
			\node (M1) at (1.5,1.1) {\scriptsize$Z(\Sigma_a,\psi|_{\Sigma_a})$};
			\node (M2) at (1.5,-1.1) {\scriptsize$Z(\Sigma_b,\psi|_{\Sigma_b})$};
			\path[->,font=\scriptsize]
			(A1) edge node[above]{$Z(\Sigma_a,\psi'|_{\Sigma_a})$} (A2)
			(C1) edge node[below]{$Z(\Sigma_b,\psi'|_{\Sigma_b})$} (C2)
			(B1) edge node[left]{$Z(S_0 \times I,h|_{S_0})$} (A1)
			(B2) edge node[right]{$Z(S_1 \times I,h|_{S_1})$} (A2)
			(B1) edge node[left]{$Z(S_0 \times I,h|_{S_0})$} (C1)
			(B2) edge node[right]{$Z(S_1 \times I,h|_{S_1})$} (C2);
			\draw (1.5,0.8) edge[implies] node[right] {\scriptsize$Z(M,\psi)$} (1.5,-0.8);
			\draw[line width=0.5pt,->]
			(b1)   \dir{270}{270} (b2) 
			;
			\draw[line width=0.5pt,->]
			(d1)   \dir{90}{90} (d2) 
			;
			\draw[line width=0.5pt,->]
			(A1)   \dir{180}{180} (C1) 
			;
			\draw[line width=0.5pt,->]
			(A2)   \dir{0}{0} (C2) 
			;
			\end{tikzpicture}\end{array}\label{eqnhaton2}
		\end{align}\normalsize
		in $\TwoVect$, in which the 2-cells occupying the two squares in the middle block come from the definition of $\widehat{?}$ on 1-morphisms, is equal to $Z(M,\psi' ) : Z(\Sigma_a,\psi'|_{\Sigma_a}) \to  Z(\Sigma_b,\psi'|_{\Sigma_b})$. Indeed, this follows from homotopy invariance because \eqref{eqnhaton2} can be described by evaluation of $Z$ on a map on $M$ homotopic to $\psi'$ relative $\partial M$. 
		
	\end{xenumerate}
	In the next step, one needs to prove that $\widehat{Z}$ is a symmetric monoidal functor. The proof is a generalization of the proof of \cite[Theorem~3.9]{schweigertwoikeofk} and relies on the gluing property of the stack $\Pi(?,BG)$, see \cite[Section~3.3]{schweigertwoikeofk} for a review, and the fact that $Z$ is symmetric monoidal. Let us give a few more details: For two 1-morphisms $\Sigma : S_0 \to S_1$ and $\Sigma' : S_1 \to S_2$ in $\Cob(n,n-1,n-2)$ consider the diagram  
	\begin{center}
		\begin{tikzpicture}[scale=1.3,  implies/.style={double,double equal sign distance,-implies},
		dot/.style={shape=circle,fill=black,minimum size=2pt,
			inner sep=0pt,outer sep=2pt},]
		\node (O) at (4,4.5) {$\Pi(\Sigma'\circ \Sigma,BG)$};
		\node (A1) at (0,0) {$\Pi(S_0,BG)$};
		\node (A2) at (2,1) {$\Pi(\Sigma,BG)$};
		\node (A3) at (4,0) {$\Pi(S_1,BG)$};
		\node (C) at (4,0.2) {};
		\node (A4) at (6,1) {$\Pi(\Sigma',BG)$};
		\node (A5) at (8,0) {$\Pi(S_2,BG)$};
		\node (B2) at (4,2) {$\Pi(\Sigma,BG)\times_{\Pi(S_1,BG)} \Pi(\Sigma',BG)$};
		\path[->,font=\scriptsize]
		(O) edge node[left]{$R$} (B2)
		(O) edge node[left]{$s_0$} (A1)
		(O) edge node[right]{$s_2$} (A5)
		(A2) edge node[above]{$r_0$} (A1)
		(A2) edge node[above]{$r_1$} (A3)
		(A4) edge node[above]{$r_1'$} (A3)
		(A4) edge node[above]{$r_2'$} (A5)
		(B2) edge node[left]{$p\ $} (A2)
		(B2) edge node[right]{$\ \ \  p'$} (A4);
		\draw (A2) edge[implies] node[above] {\scriptsize$\eta$} (A4);
		\end{tikzpicture},
		\end{center}
	$r_0,r_1,r_1',r_2, s_0$ and $s_2$ are the restriction functors, the inner square is the homotopy pullback and $R$ also comes from restriction. The gluing property of $\Pi(?,BG)$ says that $R$ is an equivalence, which exhibits $\Pi(\Sigma'\circ \Sigma)$ as another model for the homotopy pullback (for this model the pullback square commutes strictly). 
	Now by \cite[Remark~4.3,~1.]{swpar} the composition $\widehat{Z}(\Sigma')\circ \widehat{Z}(\Sigma)$ is canonically 2-isomorphic to the 1-morphism in $\TRepGrpd$ with span part
	\begin{align}
	\Pi(S_0,BG) \xleftarrow{s_0} \Pi(\Sigma \circ \Sigma',BG) \xrightarrow{s_2} \Pi(S_2,BG)
	\end{align} and intertwiner $s_0^* \widehat{Z}(S_0) \to s_2^* \widehat{Z}(S_2)$ whose evaluation on $\varphi : \Sigma '\circ \Sigma \to BG$ is given by
	\begin{align}
	Z(\Sigma',\varphi|_{\Sigma'}) \circ Z(\Sigma,\varphi|_{\Sigma}) \cong Z(\Sigma'\circ \Sigma,\varphi)\ ,
	\end{align} where this last isomorphism is part of the data of $Z$. This gives us the needed isomorphism $\widehat{Z}(\Sigma')\circ \widehat{Z}(\Sigma) \cong \widehat{Z}(\Sigma'\circ \Sigma)$. 
	
	The proof of the strict preservation of vertical composition of 2-morphisms and the preservation of the horizontal composition of 2-morphisms up to the 2-isomorphisms for the composition of 1-morphisms just constructed proceeds in an analogous way using the gluing property of $\Pi(?,BG)$. 
	
	The symmetric monoidal structure comes from the additivity of $\Pi(?,BG)$ and the monoidal structure of $Z$. 
	
	Finally, we observe that $\widehat{Z}$ is functorial in $Z$.      
	
\end{proof}

\subsection{Definition and explicit description of the orbifold construction}
As outlined in the introduction, the orbifold construction for equivariant topological field theories
 is obtained by first changing to equivariant coefficients using Theorem~\ref{thmhatconstruction} and then applying the parallel section functor 
\begin{align} \Par : \TRepGrpd\to \TwoVect \end{align} from \cite[Theorem~4.9]{swpar} which extends taking parallel sections of 2-vector bundles \cite[Definition~2.10]{swpar} to a symmetric monoidal functor by means of pull-push constructions.  We are now ready to state the following central definition: 

\begin{definition}[Orbifold construction for extended $G$-equivariant topological field theories]\label{defofkext}
	Let $G$ be a finite group. Then the \emph{orbifold construction for extended $G$-equivariant topological field theories} is the functor
	\begin{align}
		\frac{?}{G} : \HSym (G\text{-}\Cob(n,n-1,n-2),\TwoVect) \to  \Sym(\Cob(n,n-1,n-2), \TwoVect)
	\end{align} from the 2-groupoid of extended $G$-equivariant topological field theories to the 2-groupoid of extended topological field theories defined as the concatenation
	\begin{center}	\begin{tikzpicture}[scale=2,     implies/.style={double,double equal sign distance,-implies},
		dot/.style={shape=circle,fill=black,minimum size=2pt,
			inner sep=0pt,outer sep=2pt},]
		\node (A1) at (0,1) {$\HSym (G\text{-}\Cob(n,n-1,n-2),\TwoVect)$};
		\node (A2) at (4,1) {$\Sym(\Cob(n,n-1,n-2), \TRepGrpd)$};
		\node (A3) at (4,0) {$\Sym(\Cob(n,n-1,n-2), \TwoVect)\ . $};
		\path[->,font=\scriptsize]
		(A1) edge node[above]{$\widehat{?}$} (A2)
		(A2) edge node[right]{$\Par _* =\Par \circ ? $} (A3)
		(A1) edge node[below]{$?/G$} (A3);
		\end{tikzpicture}
	\end{center}
	For an extended $G$-equivariant topological field theory $Z$, we call the extended topological field theory $Z/G$ the \emph{orbifold theory of $Z$}. 
\end{definition}

 From the prescriptions for the change of coefficients and the definition of the parallel section functor from \cite{swpar},
  it is straightforward to deduce the following explicit description of the orbifold construction which we are going to need in Section~\ref{3-2-1case}: 

\begin{proposition}[]\label{satzorbifoldconcrete}
	For any finite group $G$ and an extended $G$-equivariant topological field theory $Z: G\text{-}\Cob(n,n-1,n-2) \to \TwoVect$ the orbifold theory $Z/G: \Cob(n,n-1,n-2) \to \TwoVect$ admits the following description:
	\begin{myenumerate}
		\item To an object $S$ in $\Cob(n,n-1,n-2)$ the orbifold theory assigns the 2-vector space \begin{align} \frac{Z}{G}(S) = \Par \widehat{Z}(S)\end{align} of parallel sections of the 2-vector bundle $\widehat{Z}(S)=Z(S,?)$ over the groupoid $\Pi(S,BG)$, see Proposition~\ref{satzexttftrep}. \label{satzorbifoldconcretea} 
		
		\item To a 1-morphism $\Sigma : S _0 \to S_1$ in $\Cob(n,n-1,n-2)$ the orbifold theory assigns the 2-linear map (i.e.\ a linear functor)\label{satzorbifoldconcreteb} 
		\begin{align}
			\frac{Z}{G}(\Sigma) \ : \ \frac{Z}{G}(S_0) = \Par \widehat{Z}(S_0) \to \frac{Z}{G}(S_0) = \Par \widehat{Z}(S_0)
		\end{align} given by
		\begin{align}
			\left( \frac{Z}{G} (\Sigma) s \right) (\xi_1) = \lim_{(\varphi,h)\in r_1^{-1}[\xi_1]} Z(S_1 \times [0,1],h)Z(\Sigma,\varphi) s (\varphi|_{S_0}) \myforall &s \in \Par \widehat{Z}(S_0),\\ \quad &\xi_1 : S_1 \to BG, 
		\end{align} where $r_1 : \Pi(\Sigma,BG) \to \Pi(S_1,BG)$ is the restriction functor. 
		
		\item For a 2-morphism $M : \Sigma_a \Longrightarrow \Sigma_b$ between 1-morphisms $\Sigma_a,\Sigma_b : S_0 \to S_1$ in $\Cob(n,n-1,n-2)$ the value of the 2-morphism\label{satzorbifoldconcretec}
		\begin{align} \frac{Z}{G}(M)\ :\ \frac{Z}{G}(\Sigma_a) \to \frac{Z}{G}(\Sigma_b) \end{align} on $s \in \Par \widehat{Z}(S_0)$ and $\xi_1 : S_1 \to BG$ is given by the commutativity of the square\small
		\begin{center}
			\begin{tikzpicture}[scale=2, implies/.style={double,double equal sign distance,-implies},
			dot/.style={shape=circle,fill=black,minimum size=2pt,
				inner sep=0pt,outer sep=2pt},]
			\node (A1) at (0,1) {${\displaystyle \left(\frac{Z}{G}(\Sigma_a)s\right)(\xi_1)}$};
			\node (A2) at (4,1) {${\displaystyle \lim_{ \substack{ (\varphi_b, h_b, \psi, g) \\ \in (r_1^b)^{-1}[\xi_1] \times_{\Pi(\Sigma_b,BG)} \Pi(M,BG)}  }   Z(S_1 \times [0,1],h_b* g|_{S_1}) Z(\Sigma_a,\psi|_{\Sigma_a}) s(\psi|_{S_0})  }$};
			\node (B1) at (0,0) {${\displaystyle \left(\frac{Z}{G}(\Sigma_b)s\right)(\xi_1)}$};
			\node (B2) at (4,0) {${\displaystyle \lim_{ \substack{ (\varphi_b, h_b, \psi, g) \\ \in (r_1^b)^{-1}[\xi_1] \times_{\Pi(\Sigma_b,BG)} \Pi(M,BG)}  }   Z(S_1 \times [0,1],h_b) Z(\Sigma_b,\psi|_{\Sigma_b}) s(\psi|_{S_0})  }   \ ,$};
			\path[->,font=\scriptsize]
			(A1) edge node[above]{pull} (A2)
			(A1) edge node[left]{${\displaystyle \frac{Z}{G}(M)}$} (B1)
			(A2) edge node[right]{$Z(M,?)$} (B2)
			(B2) edge node[below]{push} (B1);
			\end{tikzpicture}
		\end{center}\normalsize
		where \begin{itemize}
			\item the pull map uses the pullback of limits along the functor $(r_1^b)^{-1}[\xi_1] \times_{\Pi(\Sigma_b,BG)} \Pi(M,BG) \to (r_1^a)^{-1}[\xi_1]$ defined using the universal property of the homotopy pullbacks involved,
			
			\item the map labelled with $Z(M,?)$ uses the vertex-wise transformation coming from the transformation $Z(\Sigma_a,\psi|_{\Sigma_a}) \xrightarrow{Z(M,\psi)} Z(\Sigma_b,\psi|_{\Sigma_b})$, the isomorphism \begin{align} Z(S_1 \times [0,1],g|_{S_1}) Z(\Sigma_b,\psi|_{\Sigma_b}) \cong Z(\Sigma_b,\psi|_{\Sigma_b}) Z(S_0 \times [0,1],g|_{S_0})\end{align} and the fact that $s$ is parallel,
			
			\item and the push map uses the pushforward of limits in 2-vector spaces, see \cite[Section~2.1]{swpar}, along the functor $(r_1^b)^{-1}[\xi_1] \times_{\Pi(\Sigma_b,BG)} \Pi(M,BG) \to (r_1^a)^{-1}[\xi_1]$ defined using the universal property of the homotopy pullbacks involved. 
			
		\end{itemize}
	\end{myenumerate}
\end{proposition}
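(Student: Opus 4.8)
The plan is to obtain all three descriptions by composing the two explicit prescriptions already at our disposal: by Definition~\ref{defofkext} the orbifold functor is $\Par\circ\widehat{Z}$, so it suffices to feed the explicit form of $\widehat{Z}$ from the proof of Theorem~\ref{thmhatconstruction} into the pull-push description of the parallel section functor $\Par:\TRepGrpd\to\TwoVect$ recalled from \cite[Definition~2.10, Section~2.1, Theorem~4.9]{swpar} and then unravel level by level. No new construction is required; the content is purely a bookkeeping of homotopy fibers and of the pull-push maps attached to spans.

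For \ref{satzorbifoldconcretea} I would simply note that $\widehat{Z}(S)$ is, by the object clause of Theorem~\ref{thmhatconstruction} together with Proposition~\ref{satzexttftrep} and Remark~\ref{bmksimpaspherical}, the 2-vector bundle $Z(S,?)$ over $\Pi(S,BG)$, and that $\Par$ evaluated on a 2-vector bundle returns by definition its 2-vector space of parallel sections; this gives $\frac{Z}{G}(S)=\Par\widehat{Z}(S)$ on the nose. For \ref{satzorbifoldconcreteb}, recall that $\widehat{Z}(\Sigma)$ is the span $\Pi(S_0,BG)\xleftarrow{r_0}\Pi(\Sigma,BG)\xrightarrow{r_1}\Pi(S_1,BG)$ of restriction functors equipped with the intertwiner $Z(\Sigma,?)$. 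The functor $\Par$ sends such a $1$-cell to the composite ``restrict along $r_0$, apply the intertwiner pointwise, push forward along $r_1$'', where the pushforward is the pointwise limit over the homotopy fibers of $r_1$. Reading off components, a point of $r_1^{-1}[\xi_1]$ is a pair $(\varphi,h)$ with $\varphi:\Sigma\to BG$ and $h$ a homotopy $\varphi|_{S_1}\simeq\xi_1$; the intertwiner contributes $Z(\Sigma,\varphi)$ applied to $s(\varphi|_{S_0})$ and the transition along $h$ contributes the isomorphism $Z(S_1\times[0,1],h)$, so the limit over $(\varphi,h)$ is exactly the displayed formula.

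For \ref{satzorbifoldconcretec} I would use that $\widehat{Z}(M)$ is, by the last step of the proof of Theorem~\ref{thmhatconstruction}, the strict span of spans with apex $\Pi(M,BG)$ carrying the morphism of intertwiners $Z(M,?)$, and that $\Par$ sends a $2$-cell of $\TRepGrpd$ to the $2$-cell assembled from the pull map along the comparison functor between homotopy fibers supplied by the relevant universal properties, the pointwise transformation induced by the morphism of intertwiners, and the pushforward of limits in $2$-vector spaces. Instantiating the homotopy fiber as $(r_1^b)^{-1}[\xi_1]\times_{\Pi(\Sigma_b,BG)}\Pi(M,BG)$ and using the parallel-section condition on $s$ together with the isomorphism $Z(S_1\times[0,1],g|_{S_1})Z(\Sigma_b,\psi|_{\Sigma_b})\cong Z(\Sigma_b,\psi|_{\Sigma_b})Z(S_0\times[0,1],g|_{S_0})$ turns these three maps into the three edges of the commuting square in the statement, with coherence inherited from functoriality of $\Par$.

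The step I expect to be the main obstacle is \ref{satzorbifoldconcretec}: one has to match carefully the indexing category $(r_1^b)^{-1}[\xi_1]\times_{\Pi(\Sigma_b,BG)}\Pi(M,BG)$ over which the limit is taken, verify that the comparison functor to $(r_1^a)^{-1}[\xi_1]$ is precisely the one produced by the universal property of the homotopy pullbacks appearing in the definition of $\Par$ on $2$-cells in \cite{swpar}, and check that the pointwise transformation is compatible with the limits on both sides so that the pull and push maps are well defined and the square genuinely commutes. The object and $1$-morphism parts, by contrast, are a direct substitution.
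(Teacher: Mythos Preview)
Your proposal is correct and is exactly the approach the paper takes: the paper does not give a separate proof but states just before the proposition that the description is ``straightforward to deduce'' from the change-to-equivariant-coefficients construction in Theorem~\ref{thmhatconstruction} and the definition of $\Par$ in \cite{swpar}, which is precisely the unwinding you outline. Your identification of part~\ref{satzorbifoldconcretec} as the only place requiring care (matching the indexing category and the comparison functor for the homotopy fibers) is also accurate.
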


 The orbifold construction for extended equivariant topological field theories generalizes previous work in \cite{schweigertwoikeofk}. Indeed, it can be compared with the orbifoldization in the non-extended case if we take into account that an extended field theory can be restricted to the endomorphisms of the empty set to obtain a non-extended field theory. Recalling how we generalized the change of coefficients in Theorem~\ref{thmhatconstruction} and the parallel section functor, see \cite{swpar} and in particular Proposition~4.8 therein, we obtain the following statement: 

\spaceplease 
\begin{proposition}[]
For any finite group $G$ the square
	\begin{center}
\begin{tikzpicture}[scale=2, implies/.style={double,double equal sign distance,-implies},
dot/.style={shape=circle,fill=black,minimum size=2pt,
	inner sep=0pt,outer sep=2pt},]
\node (A1) at (0,1) {$\HSym(G\text{-}\Cob(n,n-1,n-2), \TwoVect) $};
\node (A2) at (4,1) {$\Sym(\Cob(n,n-1,n-2), \TwoVect) $};
\node (B1) at (0,0) {$\HSym(G\text{-}\Cob(n,n-1), \Vect)$};
\node (B2) at (4,0) {$\Sym(\Cob(n,n-1), \Vect)$};
\path[->,font=\scriptsize]
(A1) edge node[above]{$?/G$} (A2)
(A1) edge node[left]{restriction} (B1)
(A2) edge node[right]{restriction} (B2)
(B1) edge node[below]{$?/G$} (B2);
\end{tikzpicture}
\end{center} commutes up to natural isomorphism of functors. The upper horizontal functor is the bicategorical orbifold construction in the extended case from Definition~\ref{defofkext}, the lower horizontal functor is the categorical orbifold construction in the non-extended case from \cite[Definition~4.3]{schweigertwoikeofk}. 
\end{proposition}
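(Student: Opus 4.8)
The plan is to factor the square into two smaller squares, one for each of the two steps making up the orbifold construction, and then to reduce everything to the compatibility of those steps with restriction. First I would record the standard identification of symmetric monoidal categories $\End_{\Cob(n,n-1,n-2)}(\emptyset)\simeq\Cob(n,n-1)$, and likewise $\End_{G\text{-}\Cob(n,n-1,n-2)}(\emptyset)\simeq G\text{-}\Cob(n,n-1)$, using that maps from $\emptyset$ to $BG$ carry no information; under these identifications the left and right vertical functors of the square are precisely ``restrict a symmetric monoidal $2$-functor to the endomorphisms of the monoidal unit'', and this does land in the respective $2$-groupoid of non-extended theories because a symmetric monoidal $2$-functor sends the unit to the unit and hence restricts to a symmetric monoidal functor $\End(\mathbf{1})\to\End(F(\mathbf{1}))\simeq\End(\mathbf{1})$. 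Since $?/G=\Par_*\circ\widehat{?}$ by Definition~\ref{defofkext} and the non-extended orbifold construction of \cite[Definition~4.3]{schweigertwoikeofk} factors in the same way as $\Par_*\circ\widehat{?}$ with intermediate target $\Sym(\Cob(n,n-1),\RepGrpdC)$, it suffices to fill a ``change of coefficients square'' and a ``parallel section square'' whose common middle column is restriction to the endomorphisms of the monoidal unit under the identifications $\End_{\TRepGrpd}(\mathbf{1})\simeq\RepGrpdC$ and $\End_{\TwoVect}(\mathbf{1})\simeq\Vect$ recorded in \cite{swpar}; pasting the two natural isomorphisms then yields the claim.

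For the change of coefficients square one unwinds Theorem~\ref{thmhatconstruction}. Because $\Pi(\emptyset,BG)$ is the terminal groupoid and $Z(\emptyset)$ is the monoidal unit $\Vect$ of $\TwoVect$, the object $\widehat{Z}(\emptyset)$ is the monoidal unit of $\TRepGrpd$, so $\widehat{Z}$ does restrict to $\End(\mathbf{1})$. On a $1$-endomorphism of $\emptyset$, that is a closed oriented $(n-1)$-manifold $\Sigma$, the functor $\widehat{Z}$ produces the span $\mathbf{1}\leftarrow\Pi(\Sigma,BG)\to\mathbf{1}$, which by Lemma~\ref{lemmamappingspacebundles} is the span over $\PBun_G(\Sigma)$, together with the intertwiner $Z(\Sigma,?)$ whose value at a bundle $\varphi$ is the endofunctor $Z(\Sigma,\varphi)$ of $\Vect$, equivalently the vector space $Z(\Sigma,\varphi)(\mathbb{C})$; together with the coherence isomorphisms \eqref{eqnatisohat} this is exactly the $G$-vector bundle over $\PBun_G(\Sigma)$ assigned by the non-extended $\widehat{?}$ of \cite[Section~3.3]{schweigertwoikeofk}. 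The same comparison on $2$-endomorphisms of $\emptyset$ (i.e.\ $n$-dimensional bordisms with corners whose $1$-boundary is empty) matches the span-of-spans and the intertwiner data, and the structure $2$-isomorphisms witnessing preservation of composition of $1$-morphisms and of the symmetric monoidal structure agree because in both the extended and the non-extended case they are produced from the gluing property of $\Pi(?,BG)$ and from the (symmetric monoidal) structure maps of $Z$. This provides the natural isomorphism filling the change of coefficients square.

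For the parallel section square I would use that $\Par_*=\Par\circ(-)$ with $\Par:\TRepGrpd\to\TwoVect$ symmetric monoidal, so restricting $\Par\circ F$ to the endomorphisms of the units equals $\bigl(\Par|_{\End(\mathbf{1})}\bigr)\circ\bigl(F|_{\End(\mathbf{1})}\bigr)$, and $\Par|_{\End(\mathbf{1})}:\RepGrpdC\to\Vect$ is, up to natural isomorphism, the non-extended parallel section functor by \cite[Proposition~4.8]{swpar}; this fills the parallel section square, and pasting it with the previous one completes the argument. The one place needing genuine care is the change of coefficients square: one must check that the homotopy-pullback and gluing arguments used in the proof of Theorem~\ref{thmhatconstruction} to construct functoriality and the monoidal constraints survive the passage to the sub-bicategory $\End(\emptyset)$, but since the extended and non-extended constructions are instances of the same recipe this is a routine, if somewhat lengthy, verification rather than a real obstacle.
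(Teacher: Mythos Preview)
Your proposal is correct and follows essentially the same approach as the paper: the paper does not give a detailed proof of this proposition but simply remarks that it follows from the way the change to equivariant coefficients in Theorem~\ref{thmhatconstruction} was set up as a generalization of the non-extended case, together with the restriction statement for the parallel section functor in \cite[Proposition~4.8]{swpar}. Your factorization into a ``change of coefficients square'' and a ``parallel section square'' is exactly the content of that remark, just spelled out in more detail than the paper chose to.
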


\subsection{Generalization of the orbifold construction to a pushforward along a group morphism\label{secpushofk}}
In \cite{schweigertwoikeofk} we generalized the non-extended orbifold construction to a push operation along an arbitrary morphism $\lambda : G \to H$ of finite groups in the sense that the orbifold construction corresponds to the pushforward along the morphism $G \to 1$ to the trivial group. This is also possible for the extended orbifold construction as we will sketch now: First denote by
\begin{align} \lambda_* : \Pi(?,BG) \to \Pi(?,BH) \end{align} the stack morphism induced by $\lambda$.
For an extended $G$-equivariant topological field theory $Z : G\text{-}\Cob(n,n-1,n-2) \to \TwoVect$ we would like to define a symmetric monoidal functor $\widehat{Z}^\lambda : H\text{-}\Cob(n,n-1,n-2) \to \TRepGrpd$.
To an object $(S,\xi)$ in $H\text{-}\Cob(n,n-1,n-2)$, i.e.\ an $(n-2)$-dimensional closed oriented manifold together with a map $\xi : S \to BH$ it assigns the pullback $q^* Z(S,?)$ of the 2-vector bundle $Z(S,? ) : \Pi(S,BG) \to \TwoVect$ along the functor $q: \lambda_*^{-1}[\xi] \to \Pi(S,BG)$ featuring in the defining square of the homotopy fiber
\begin{center}
		\begin{tikzpicture}[scale=1.5, implies/.style={double,double equal sign distance,-implies},
		dot/.style={shape=circle,fill=black,minimum size=2pt,
			inner sep=0pt,outer sep=2pt},]
		\node (A1) at (0,1) {$\lambda_*^{-1}[\xi]$};
		\node (A2) at (2,1) {$\Pi(S,BG)$};
		\node (B1) at (0,0) {$\star$};
		\node (B2) at (2,0) {$\Pi (S,BH)$};
		\path[->,font=\scriptsize]
		(A1) edge node[above]{$q$} (A2)
		(A1) edge node[left]{} (B1)
		(A2) edge node[right]{$\lambda_*$} (B2)
		(B1) edge node[below]{$\xi$} (B2);
		\end{tikzpicture}
	\end{center} of $\lambda_* : \Pi(S,BG) \to \Pi(S,BH)$ over $\xi$. On 1-morphisms and 2-morphisms one straightforwardly generalizes the assignments made in \cite[Section~4]{schweigertwoikeofk} to obtain $\widehat{Z}^\lambda$. The construction is obviously functorial in $Z$, so we obtain the following result:
	
	\begin{proposition}[]
	For any morphism $\lambda : G \to H$ of finite groups the assignment $Z\mapsto \widehat{Z}^\lambda$ extends to a functor
	\begin{align}
	\widehat{?}^\lambda : \HSym ({G}\text{-}\Cob(n,n-1,n-2),\TwoVect) \to \HSym ({H}\text{-}\Cob(n,n-1,n-2),\TRepGrpd).
	\end{align}
	\end{proposition}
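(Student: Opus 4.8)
The plan is to follow the proof of Theorem~\ref{thmhatconstruction} almost verbatim, replacing throughout the mapping groupoid $\Pi(S,BG)$ by the homotopy fiber $\lambda_*^{-1}[\xi]$ of $\lambda_*:\Pi(S,BG)\to\Pi(S,BH)$ over the $BH$-decoration $\xi$, and adjusting the spans of groupoids accordingly. First I would pin down the data of $\widehat{Z}^\lambda$ on 1- and 2-cells of $H\text{-}\Cob(n,n-1,n-2)$: to a 1-morphism $(\Sigma,\varphi):(S_0,\xi_0)\to(S_1,\xi_1)$ one assigns the span
\[
\lambda_*^{-1}[\xi_0]\;\longleftarrow\;\lambda_*^{-1}[\varphi]\;\longrightarrow\;\lambda_*^{-1}[\xi_1]
\]
of homotopy fibers (the arrows come from restriction of maps, which is compatible with $\lambda_*$ and which sends $\varphi$ to $\xi_0$, resp.\ $\xi_1$), together with the intertwiner obtained by pulling back $Z(\Sigma,?)$ and its structure isomorphisms \eqref{eqnatisohat} along the canonical functor $\lambda_*^{-1}[\varphi]\to\Pi(\Sigma,BG)$; to a 2-morphism $(M,\psi)$ one assigns, exactly as in step~(2) of the proof of Theorem~\ref{thmhatconstruction}, the span of spans of the corresponding homotopy fibers of $\lambda_*:\Pi(M,BG)\to\Pi(M,BH)$ together with the pulled-back natural morphism $Z(M,?)$.

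Next I would verify the two properties needed for $\widehat{Z}^\lambda$ to lie in $\HSym(H\text{-}\Cob(n,n-1,n-2),\TRepGrpd)$. Homotopy invariance of $\widehat{Z}^\lambda$ is inherited from that of $Z$: on top-dimensional bordisms $\widehat{Z}^\lambda$ only evaluates $Z$ on maps to $BG$ lying over a fixed map to $BH$, and a relative homotopy of the $BH$-decoration, pulled back to the homotopy fiber, still induces via $Z$ the required invertible 2-morphism, cf.\ Remark~\ref{bmkexthqft},~\ref{bmkexthqft4}; the compatibility 2-cell analogous to \eqref{eqnhaton2} is again described by evaluating $Z$ on a map homotopic relative its boundary to the given one.

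The main obstacle is the symmetric monoidal structure, i.e.\ producing the coherent compositors $\widehat{Z}^\lambda(\Sigma')\circ\widehat{Z}^\lambda(\Sigma)\cong\widehat{Z}^\lambda(\Sigma'\circ\Sigma)$ and the multiplicativity isomorphisms $\widehat{Z}^\lambda(\Sigma\sqcup\Sigma')\cong\widehat{Z}^\lambda(\Sigma)\sqcup\widehat{Z}^\lambda(\Sigma')$. Here one must commute ``take the homotopy fiber over a fixed $BH$-decoration'' past ``take the homotopy pullback of $BG$-mapping groupoids along restriction functors''; both are homotopy limits and therefore commute. Concretely, since $\Pi(?,BG)$ and $\Pi(?,BH)$ are stacks and $\lambda_*$ is a morphism of stacks, the fiber $\lambda_*^{-1}[\varphi]$ enjoys the same gluing property along collars that was used for $\Pi(?,BG)$ in the proof of Theorem~\ref{thmhatconstruction}: the restriction functor $\lambda_*^{-1}[\varphi'\circ\varphi]\to\lambda_*^{-1}[\varphi]\times_{\lambda_*^{-1}[\xi_1]}\lambda_*^{-1}[\varphi']$ is an equivalence, exhibiting $\lambda_*^{-1}[\varphi'\circ\varphi]$ as a strict model for the homotopy pullback. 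Combining this with \cite[Remark~4.3,~1.]{swpar} and the composition isomorphisms that are part of the data of $Z$ yields the compositor; strict preservation of vertical composition of 2-morphisms and preservation of horizontal composition up to the compositors follow by the same gluing argument, and the monoidal and symmetry data come from the additivity of $\lambda_*^{-1}[?]$ together with the symmetric monoidal structure of $Z$. Finally, functoriality of $Z\mapsto\widehat{Z}^\lambda$ in $Z$ is checked directly, as in the last line of the proof of Theorem~\ref{thmhatconstruction}, completing the construction of the functor $\widehat{?}^\lambda$.
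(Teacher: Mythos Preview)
Your proposal is correct and follows the same approach the paper takes: the paper does not give a separate proof of this proposition but merely sketches in the preceding paragraph that on objects one uses the pullback of $Z(S,?)$ along $q:\lambda_*^{-1}[\xi]\to\Pi(S,BG)$ and that ``on 1-morphisms and 2-morphisms one straightforwardly generalizes the assignments made in \cite[Section~4]{schweigertwoikeofk}'', which is precisely what you spell out by rerunning the proof of Theorem~\ref{thmhatconstruction} with homotopy fibers in place of the full mapping groupoids. Your identification of the key technical point---that the gluing/compositor step requires commuting the homotopy fiber over the $BH$-decoration with the homotopy pullback of $BG$-mapping groupoids, which holds because both are homotopy limits---is exactly the content hidden behind the paper's ``straightforwardly''.
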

	
	\begin{definition}[]\label{defpushforward}
	For a morphism $\lambda : G \to H$ of finite groups we define the \emph{pushforward of $G$-equivariant topological field theories}
	\begin{align}
	\lambda_* : \HSym (G\text{-}\Cob(n,n-1,n-2),\TwoVect) \to \HSym ({H}\text{-}\Cob(n,n-1,n-2),\TwoVect)\label{eqnpushfunctorgrp}
	\end{align}
	 along $\lambda$ as the concatenation
	\begin{center}	\begin{tikzpicture}[scale=2,     implies/.style={double,double equal sign distance,-implies},
			dot/.style={shape=circle,fill=black,minimum size=2pt,
				inner sep=0pt,outer sep=2pt},]
			\node (A1) at (0,1) {$\HSym (G\text{-}\Cob(n,n-1,n-2),\TwoVect)$};
			\node (A2) at (4,1) {$\HSym ({H}\text{-}\Cob(n,n-1,n-2),\TRepGrpd)$};
			\node (A3) at (4,0) {$\HSym ({H}\text{-}\Cob(n,n-1,n-2),\TwoVect)$};
			\path[->,font=\scriptsize]
			(A1) edge node[above]{$\widehat{?}^\lambda$} (A2)
			(A2) edge node[right]{$\Par _*$} (A3)
			(A1) edge node[below]{$\lambda_*$} (A3);
			\end{tikzpicture}.
		\end{center}
		\end{definition}
		This generalizes the pushforward construction given in \cite{schweigertwoikeofk}. 
		Again, the orbifold construction can be identified with the pushforward along the group morphism $G \to 1$ to the trivial group.
		
	The results in \cite[Section~4.3]{schweigertwoikeofk} generalize to the present context of extended field theories although the details are involved and will not be pursued further in this article:	For composable morphisms $\lambda : G \to H$ and $\mu : H \to J$ of finite groups we  obtain the composition law $(\mu \circ \lambda)_* \cong \mu_* \circ \lambda_*$, where $\cong$ denotes a canonical coherent equivalence of functors between 2-groupoids. Then by sending a finite group $G$ to the 2-groupoid of extended $G$-equivariant topological field theories and a morphism of finite groups to the corresponding push functor \eqref{eqnpushfunctorgrp} we  obtain a 3-functor
		\begin{align}
		\FinGrpd \to 2\text{-}\Grpd \label{pushgrpdext}
		\end{align} from the category of finite groups (seen as a tricategory with only identity 2-cells and 3-cells) to the tricategory of 2-groupoids. 
		
		As in \cite{schweigertwoikeofk} we can deduce by means of \eqref{pushgrpdext} that the orbifold construction is essentially surjective, i.e.\ any extended topological field theory arises as an orbifold theory of a $G$-equivariant theory for any given finite group $G$. 

In \cite{muellerwoike} we use the pushforward for the construction of examples of extended homotopy quantum field theories.

\section{The 3-2-1-dimensional case and the orbifold construction for modular categories\label{3-2-1case}}
The main focus of this section lies on equivariant topological field theories and their orbifoldization in the 3-2-1-dimensional case because there the situation allows for an interesting algebraic description in terms of equivariant modular categories and their orbifoldization via taking homotopy fixed points, i.e.\ via orbifold categories, see e.g.\ \cite{kirrilovg04}. Before addressing this orbifoldization and the relation to our geometrically motivated construction we have to establish a few facts on 3-2-1-dimensional equivariant topological field theories which are interesting in their own right. 

Let $Z : G\text{-}\Cob(3,2,1) \to \TwoVect$ be an extended $G$-equivariant topological field theory for a finite group $G$. Then 
by the construction from Proposition~\ref{satzexttftrep} and Remark~\ref{bmksimpaspherical} we obtain a 2-vector bundle over the groupoid of $G$-bundles over $\sphere^1$ by
 evaluation of $Z$ on the circle $\sphere^1$ equipped with $G$-bundles over the circle. 
 
 The groupoid $\PBun_G(\sphere^1)$ of $G$-bundles over $\sphere^1$ is non-canonically equivalent to the action groupoid $G//G$. More precisely, the equivalence chooses a basepoint and orientation of $\sphere^1$ and assigns to a given bundle the holonomy of the based loop surrounding $\sphere^1$ once in the positive direction. So whenever a bundle is characterized by a group element, we actually mean the holonomy with respect the loop determined by the basepoint and the orientation. To illustrate this issue, consider the bent cylinder (as a bordism $\sphere^1 \coprod \sphere^1 \to \emptyset$)
 \begin{center}
 	\vspace*{0.5cm}
 	\includegraphics[width=0.13\textwidth]{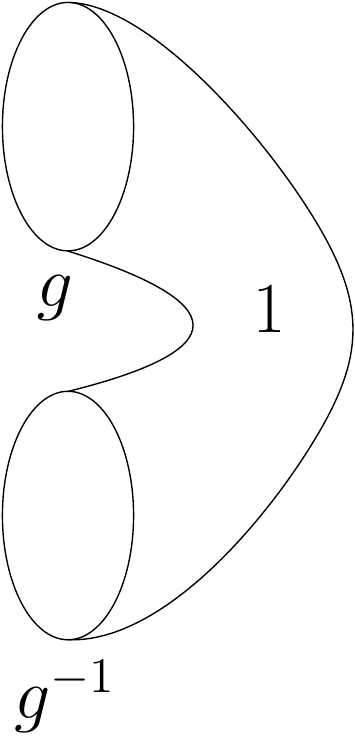}\label{bentcylinderfig}
 	\vspace*{0.5cm}
 \end{center} with the identity homotopy on it. On all circle-shaped slices of the cylinder we have the same bundle. But since the upper and the lower copy of the circle carry different orientations we obtain holonomy values which are inverse to each other. 
 
 By means of a fixed equivalence $\PBun_G(\sphere^1)\cong G//G$, we obtain from $Z$ a 2-vector bundle
\begin{align} G//G \to \TwoVect \end{align} sending an object $g\in G$ in $G//G$ to a 2-vector space $\cat{C}_g^Z=Z(\sphere^1,g)$ and a morphism $h : g \to hgh^{-1}$ in $G//G$ to a 2-linear equivalence
\begin{align} \phi_h : \cat{C}_g^Z=Z(\sphere^1,g) \to \cat{C}_{hgh^{-1}}^Z=Z(\sphere^1,hgh^{-1}).\label{eqnactionofGbyequiv}\end{align} 
We use the notation $h.X := \phi_h X$. 
By construction, this 2-linear equivalence arises by evaluation of $Z$ on the cylinder with ingoing holonomy $g$ and outgoing holonomy $hgh^{-1}$. The two bundles characterized by these holonomies are isomorphic by a gauge transformation $h$. Technically, we have to understand $h$ as a homotopy of the classifying maps for the bundles characterized by the holonomies $g$ and $hgh^{-1}$. This homotopy is put on the cylinder such that we can evaluate $Z$ on it. Depending on what is convenient we will switch between the pictorial representations
	\begin{center}
\vspace*{0.5cm}
\includegraphics[width=0.45\textwidth]{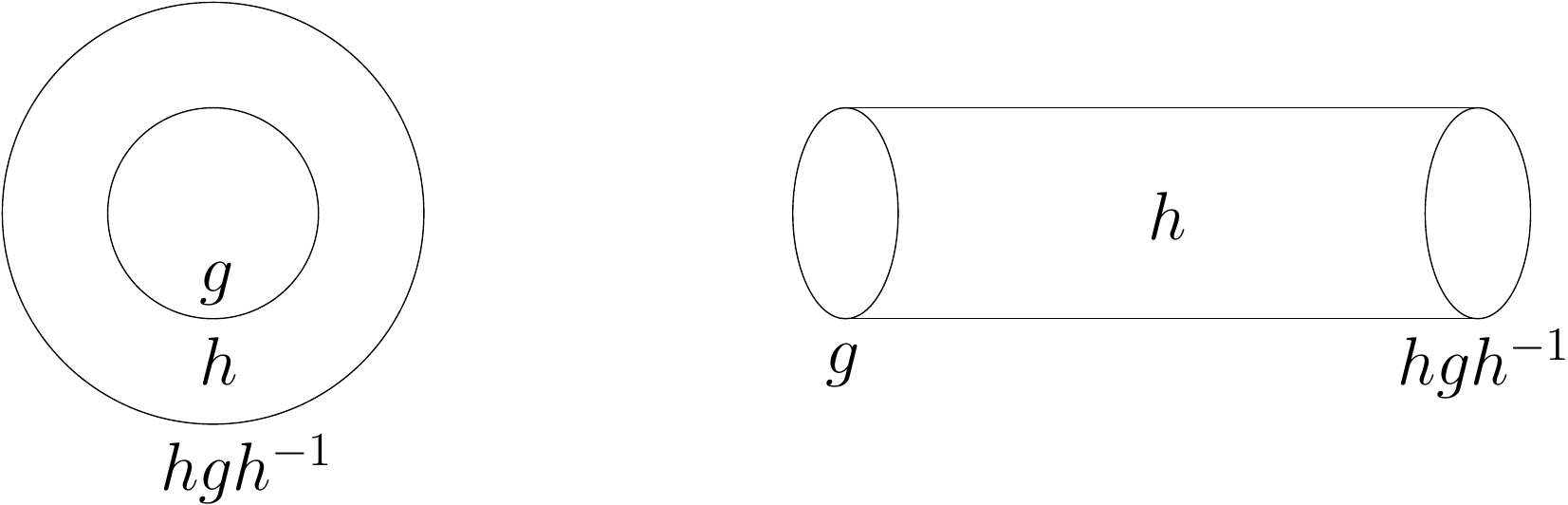}
\vspace*{0.5cm}
\end{center} for the corresponding 1-morphism in $G\text{-}\Cob(3,2,1)$.

Now 
\begin{itemize}
\item the category
\begin{align} \cat{C}^Z := \bigoplus_{g\in G} \cat{C}_g^Z \end{align} 
\item together with the equivalences $\phi_h : \cat{C}^Z \to \cat{C}^Z$ obtained from the equivalences \eqref{eqnactionofGbyequiv} 
\item and the coherence data of our 2-vector bundle consisting of natural isomorphisms $\alpha_{g,h} : \phi_g \circ \phi_h \cong \phi_{gh}$ and $\phi_1 \cong \id_{\cat{C}^Z}$
\end{itemize} form a \emph{$G$-equivariant category} in the terminology of \cite{kirrilovg04}. Using a physics inspired terminology we call $\cat{C}_g$ the \emph{twisted sector} for the group element $g\in G$. The sector $\cat{C}_1^Z$ of the neutral element $1 \in G$ is called the \emph{neutral sector}.

Just like in the non-equivariant case treated in \cite{BDSPV153D}, the category obtained by evaluation on the circle carries a lot of interesting structure, which arises from the geometric framework provided by the topological field theory. For an equivariant version of Dijkgraaf-Witten theory, this analysis was carried out in \cite{maiernikolausschweigerteq}.
The structure present in the general equivariant case will be investigated in the next sections and can be summarized as follows:

\begin{theorem}\label{summarythm}
The evaluation of a 3-2-1-dimensional $G$-equivariant topological field theory on the circle is naturally a $G$-ribbon category.
\end{theorem}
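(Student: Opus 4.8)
The plan is to unpack Theorem~\ref{summarythm} into the individual structural statements already advertised in the introduction (Propositions~\ref{satzequivariantmonoidalstructure}, \ref{satzczduals}, \ref{satzequivbraiding}, \ref{satzgribbonkat}) and to verify in each case that evaluation of $Z$ on an appropriate bordism, decorated with an appropriate map to $BG$, produces the required piece of algebraic data, with the coherence axioms following from the relations among bordisms together with the homotopy invariance property of $Z$. Thus the proof is really a dictionary between generators-and-relations presentations of $G\text{-}\Cob(3,2,1)$ and the defining data of a $G$-ribbon category in the sense of \cite{kirrilovg04,turaevhqft}.

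First I would fix, once and for all, an equivalence $\PBun_G(\sphere^1)\cong G//G$ as in the discussion preceding the theorem, so that $\widehat{Z}(\sphere^1)=Z(\sphere^1,?)$ becomes a 2-vector bundle $G//G\to\TwoVect$ with fibres $\cat{C}_g^Z$ and transport equivalences $\phi_h$; the underlying category is $\cat{C}^Z=\bigoplus_{g\in G}\cat{C}_g^Z$ together with the $G$-action $\phi_h$ and coherence isomorphisms $\alpha_{g,h}$, which is precisely a $G$-equivariant category. Next I would produce the $G$-crossed monoidal structure by evaluating $Z$ on the pair of pants $\sphere^1\coprod\sphere^1\to\sphere^1$ decorated with the map to $BG$ whose boundary holonomies are $g$, $h$ and $gh$ (using that $\pi_1$ of a pair of pants is free on two generators, so such a map exists and is unique up to homotopy rel boundary); homotopy invariance forces the tensor product $\cat{C}_g^Z\otimes\cat{C}_h^Z\to\cat{C}_{gh}^Z$ to be well-defined, and the pentagon, the compatibility of $\otimes$ with the $G$-action (the "crossing" axiom $\phi_k(X\otimes Y)\cong\phi_k X\otimes\phi_k Y$ over $\cat{C}_{kgk^{-1}}^Z$), and the unit constraints (from the disk decorated trivially, giving $\cat{C}_1^Z\ni\mathbf{1}$) all reduce to standard diffeomorphisms of surfaces relating iterated pairs of pants and disks. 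The same surface, read with the two inputs braided past each other, together with the cylinder carrying a gauge transformation, gives the $G$-braiding $c_{X,Y}\colon X\otimes Y\to\phi_g Y\otimes X$ for $X\in\cat{C}_g^Z$; the two hexagon axioms follow from the two ways of pulling three strands apart, and naturality with respect to the $G$-action follows from homotopy invariance applied to a cylinder-times-interval. Finally, the $G$-twist $\theta_g\colon\id_{\cat{C}_g^Z}\to\phi_g$ is obtained by evaluating $Z$ on the cylinder over $\sphere^1$ carrying the Dehn twist, regarded as a self-homotopy of the classifying map; that this is a $G$-twist (compatibility with tensor product and braiding, i.e.\ the ribbon/balancing equation) follows from the standard identity in the mapping class group of the pair of pants expressing the boundary Dehn twist in terms of the three boundary twists and the braiding, now decorated with maps to $BG$.

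I would organise these as separate lemmas or propositions (indeed, the paper already does so), and in the actual write-up of Theorem~\ref{summarythm} simply assemble them: "By Propositions~\ref{satzequivariantmonoidalstructure}, \ref{satzczduals}, \ref{satzequivbraiding} and \ref{satzgribbonkat}, the category $\cat{C}^Z$ carries a $G$-equivariant monoidal structure, duals, a $G$-braiding and a compatible $G$-twist; by definition this is exactly the data of a $G$-ribbon category, and naturality in $Z$ is clear since every piece of structure is obtained by evaluating $Z$ on a fixed decorated bordism." Duality (needed for the "ribbon" as opposed to merely "balanced braided" statement) I would get from the standard cup/cap bordisms $\emptyset\to\sphere^1\coprod\sphere^1$ and $\sphere^1\coprod\sphere^1\to\emptyset$ decorated so that the holonomies on the two circles are inverse to each other — which is forced by the orientation reversal illustrated by the bent cylinder in the excerpt — so that the dual of an object of $\cat{C}_g^Z$ lands in $\cat{C}_{g^{-1}}^Z$, and the snake identities become the usual zig-zag diffeomorphisms.

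The main obstacle, and the part that is genuinely more than bookkeeping, is the bicategorical/extended nature of everything: the pair of pants is a $1$-morphism, its various gluings are only associative up to the coherence $2$-isomorphisms of the bicategory $G\text{-}\Cob(3,2,1)$, and $Z$ is only a weak $2$-functor, so "the tensor product is associative" really means "there is a specified $2$-isomorphism, evaluated by $Z$ on an explicit $3$-manifold with corners realising the associator, and the pentagon holds because two such $3$-manifolds are diffeomorphic rel boundary". Keeping track of these coherence $2$-cells — equivalently, working with a presentation of $G\text{-}\Cob(3,2,1)$ as a symmetric monoidal bicategory with generators and relations up to one categorical level higher than in the non-extended case — and checking that the decorations by maps to $BG$ are consistent under all the relevant $3$-dimensional moves is where the real care is required; here I would lean on the homotopy invariance property of $Z$ and on Lemma~\ref{lemmamappingspacebundles} (so that decorations are just $G$-bundles and the relevant $2$-groupoid of decorations on each surface is understood), reducing each coherence check to the statement that two maps to $BG$ on a fixed $3$-manifold with corners agree up to homotopy rel boundary.
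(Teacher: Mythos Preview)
Your proposal is correct and follows essentially the same architecture as the paper: Theorem~\ref{summarythm} is not given a standalone proof but is explicitly assembled from Propositions~\ref{satzequivariantmonoidalstructure}, \ref{satzczduals}, \ref{satzequivbraiding} and \ref{satzgribbonkat}, each of which extracts one layer of structure (equivariant monoidal product from the pair of pants, duals, $G$-braiding from the rotation of the pair of pants, $G$-twist from the Dehn twist) exactly as you describe.

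One point where your sketch is too quick compared to the paper is duality. You write that duals come from the cup/cap bordisms and that ``the snake identities become the usual zig-zag diffeomorphisms''. But the bent cylinders, evaluated as 1-morphisms, only exhibit $\cat{C}_g^Z$ and $\cat{C}_{g^{-1}}^Z$ as dual \emph{2-vector spaces}; this does not by itself produce, for each object $X\in\cat{C}_g^Z$, a dual object $X^*\in\cat{C}_{g^{-1}}^Z$ together with (co)evaluation \emph{morphisms} satisfying both snake identities. The paper instead follows \cite{BDSPV153D}: evaluate $Z$ on a specific decorated bordism $N_g$ diffeomorphic to the cylinder, read off from the resulting bimodule description an object $X'$ with maps $\alpha:I\to X'\otimes X$ and $\beta:X\otimes X'\to I$ satisfying \emph{one} zig-zag, observe that the other zig-zag yields only an idempotent on $X'$, and split that idempotent in the finitely semisimple category $\cat{C}_{g^{-1}}^Z$ to obtain the genuine dual $X^*$. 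The compatibility of twist with duality (part of Proposition~\ref{satzgribbonkat}) is likewise handled via the bimodule calculus of Corollary~\ref{korbimodG} rather than a direct mapping-class-group identity. Your instinct about where these structures come from is right, but the execution at the object level needs this extra step.
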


Below we will establish this structure step by step and in particular recall the necessary equivariant analogues of the monoidal product (Section~\ref{equivmonstructure}), the duality  (Section~\ref{secduality}), the braiding (Section~\ref{secbraiding}) and the twist (Section~\ref{sectwist}). 
The structure given in Theorem~\ref{summarythm} will be complemented by a non-degeneracy property leading to equivariant modularity that is strongly linked to non-equivariant modularity via the orbifold construction (Section~\ref{secmod}). 

\spaceplease
\subsection{Equivariant monoidal structure\label{equivmonstructure}}
The pair of pants, appropriately decorated with bundles, will give an equivariant monoidal structure on the category obtained by evaluation of an equivariant 3-2-1-dimensional topological field theory on the circle.

\begin{definition}[Equivariant monoidal category, after $\text{\cite[VI.2.1]{turaevhqft}}$]
Let $G$ be a finite group. A \emph{$G$-equivariant monoidal category} is a $G$-equivariant category $\cat{C} = \bigoplus_{g\in G}\cat{C}_g$, i.e.\ a 2-vector bundle over $G//G$ taking values in the 2-category of categories, together with monoidal structure on $\cat{C}$ and the structure of a monoidal functor on each of the equivalences $\phi_g : \cat{C} \to \cat{C}$ such that
\begin{xenumerate}
  \item for $X \in \cat{C}_g$ and $Y \in \cat{C}_h$ we have $X\otimes Y \in \cat{C}_{gh}$,
  \item the coherence isomorphisms of $\cat{C}$ are compatible with the structure isomorphisms of the group elements acting as monoidal functors, see \cite[Definition~4.2]{maiernikolausschweigerteq}.
\end{xenumerate}
We call a {$G$-equivariant monoidal category} \emph{complex finitely semisimple} if $\cat{C}$ is a $\mathbb{C}$-linear, Abelian, finitely semisimple category such that the monoidal product is $\mathbb{C}$-bilinear (or equivalently a 2-linear map $\cat{C}\boxtimes \cat{C} \to \cat{C}$ defined on the Deligne product of $\cat{C}$ with itself). 
\end{definition}

The pair of pants with bundles $g,h \in G$ on the ingoing circles has the bundle $gh$ on the outgoing circle; pictorially:
\begin{center}
\vspace*{0.5cm}
\includegraphics[width=0.25\textwidth]{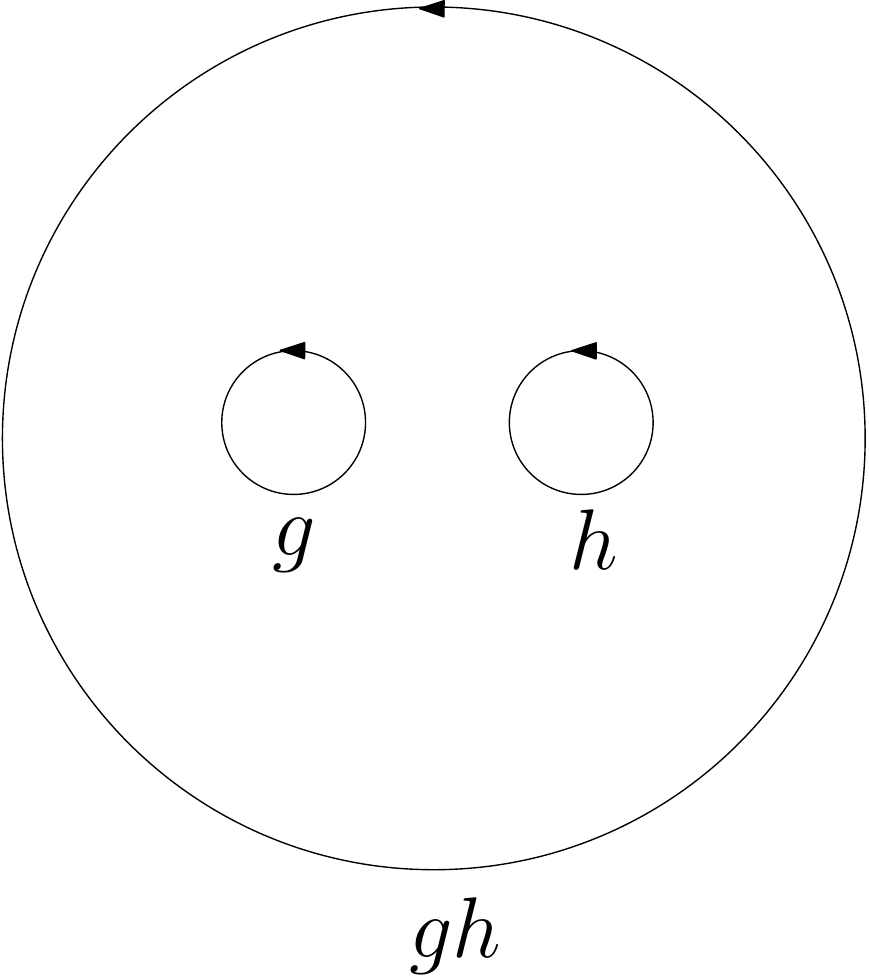}
\vspace*{0.5cm}
\end{center}\label{figtrinion}

Or, in other words, the evaluation of the stack $\Pi(?,BG)$ of $G$-bundles on the pair of pants yields the span
\begin{align}\label{eqnpairofpantsspan}
G//G \times G//G \stackrel{B}{\longleftarrow} (G\times G)//G \stackrel{M}{\to} G//G \ ,
\end{align} where $B$ is the obvious functor and $M$ the multiplication.
Hence, evaluation of $Z$ on the pair of pants decorated with ingoing bundles $g$ and $h$ yields a 2-linear functor $\otimes_{g,h} : \cat{C}_g^Z \boxtimes \cat{C}_h^Z \to \cat{C}_{gh}^Z$. These functors assemble to give the monoidal product $\otimes : \cat{C}^Z\boxtimes \cat{C}^Z \to \cat{C}^Z$. As required for an equivariant monoidal product, it carries $X\in \cat{C}_g^Z$ and $Y \in \cat{C}_h^Z$ to $X\otimes Y \in \cat{C}_{gh}^Z$. Evaluation on the disk seen as bordism $\emptyset \to \sphere^1$ decorated with the trivial $G$-bundle yields a 2-linear functor $\eta: \FinVect \to \cat{C}_1^Z$, which is determined by the object $I:= \eta(\mathbb{C})$ in $\cat{C}_1^Z$. This object can easily be seen to be the monoidal unit. Let us formally state these findings:

\begin{proposition}[]\label{satzequivariantmonoidalstructure}
	For any extended $G$-equivariant topological field theory $Z : G\text{-}\Cob(3,2,1) \to \TwoVect$ the evaluation on the pair of pants endows
	$\cat{C}^Z $ with the structure of a complex finitely semisimple $G$-equivariant monoidal category.
\end{proposition}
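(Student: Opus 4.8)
The plan is to verify that the data extracted from $Z$ by evaluation on the elementary 2-dimensional bordisms (pair of pants, disk, and their reversals) satisfies the axioms of a complex finitely semisimple $G$-equivariant monoidal category, where all coherence identities are forced by the functoriality of $Z$ together with diffeomorphisms of surfaces that are compatible with the decorating bundles. First I would recall that the underlying $2$-vector bundle $\cat{C}^Z = Z(\sphere^1,?) : \Pi(\sphere^1,BG) \simeq G//G \to \TwoVect$ has already been produced (Proposition~\ref{satzexttftrep}, Remark~\ref{bmksimpaspherical}); complex finite semisimplicity of each sector $\cat{C}_g^Z$ is immediate since $Z$ takes values in $\TwoVect$, and $\cat{C}^Z = \bigoplus_{g\in G} \cat{C}_g^Z$ is then finitely semisimple because $G$ is finite. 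So the real content is the monoidal structure and the compatibility with the $G$-action.

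Next I would define the monoidal product as the family of $2$-linear functors $\otimes_{g,h} := Z(P_{g,h}) : \cat{C}_g^Z \boxtimes \cat{C}_h^Z \to \cat{C}_{gh}^Z$, where $P_{g,h}$ is the pair of pants decorated by the bundle over $\sphere^1 \sqcup \sphere^1$ with holonomies $g,h$ (the existence of such a decoration, and that it restricts to holonomy $gh$ on the outgoing circle, is exactly the span \eqref{eqnpairofpantsspan} together with Lemma~\ref{lemmamappingspacebundles}); the fact that $Z$ is \emph{symmetric monoidal} identifies its value on $\sphere^1 \sqcup \sphere^1$ with the Deligne product $\cat{C}_g^Z \boxtimes \cat{C}_h^Z$. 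The associator is then obtained from the two decompositions of the $4$-holed sphere (equivalently: the $2$-morphism in $G\text{-}\Cob(3,2,1)$ given by the cylinder-with-corners realizing the diffeomorphism $P_{gh,k}\circ(P_{g,h}\sqcup \id) \cong P_{g,hk}\circ(\id \sqcup P_{h,k})$, decorated by the evident homotopy of classifying maps), and the left/right unitors from the diffeomorphisms gluing the decorated disk $\eta$ into one leg of $P_{1,g}$ resp. $P_{g,1}$. The pentagon and triangle axioms then follow from the fact that the relevant $2$-morphisms in the bordism bicategory compose to the identity up to a homotopy of bundle data relative to the boundary, at which point the homotopy invariance property of $Z$ (built into the definition of extended HQFT) makes $Z$ send them to equalities; this is the standard cobordism-hypothesis-style argument that a symmetric monoidal functor on $\Cob(n,n-1,n-2)$ produces a monoidal structure, now with all manifolds carrying bundles, so I would organize it as a bundle-decorated refinement of the corresponding step in \cite{BDSPV153D}. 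The $G$-equivariant structure, i.e.\ the monoidal-functor structure on each $\phi_h : \cat{C}^Z \to \cat{C}^Z$ from \eqref{eqnactionofGbyequiv} and its compatibility with $\otimes$ in the sense of \cite[Definition~4.2]{maiernikolausschweigerteq}, comes from evaluating $Z$ on the pair of pants decorated with a homotopy (gauge transformation) $h$ interpolating the in/out holonomies, using that this decorated bordism is diffeomorphic-over-$BG$ to the one built by conjugating $P_{g,h}$ on all three legs; the compatibility isomorphisms $\alpha_{g,h}$ of the $2$-vector bundle supply exactly the coherence data, and their axioms again reduce to homotopy invariance of $Z$.

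The main obstacle, and where I would spend the most care, is bookkeeping of the bundle decorations under gluing: every diffeomorphism of surfaces I use to build associators, unitors, and the equivariance constraints must be promoted to a $2$-morphism \emph{in $G\text{-}\Cob(3,2,1)$}, i.e.\ equipped with a map to $BG$ on the mapping-cylinder-type bordism that restricts correctly on both ends, and I must check that distinct such choices differ by a homotopy relative boundary so that homotopy invariance of $Z$ collapses the ambiguity --- this is precisely the role of the gluing (descent) property of the stack $\Pi(?,BG)$, exactly as used in the proof of Theorem~\ref{thmhatconstruction}, so I would invoke that property repeatedly rather than reprove it. Everything else --- $\mathbb{C}$-bilinearity of $\otimes$ (equivalently: that it is a $2$-linear map $\cat{C}^Z \boxtimes \cat{C}^Z \to \cat{C}^Z$, which is automatic since $Z$ lands in $\TwoVect$ and is symmetric monoidal, so $Z$ of a disjoint union is a Deligne product), the identification $I = \eta(\mathbb{C})$ as the unit, and the grading condition $X\otimes Y \in \cat{C}_{gh}^Z$ --- is then routine and I would state it without detailed computation.
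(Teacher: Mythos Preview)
Your proposal is correct and follows essentially the same approach as the paper: the monoidal product from the decorated pair of pants, the unit from the disk, associators and unitors from diffeomorphisms of surfaces promoted to 2-morphisms in $G\text{-}\Cob(3,2,1)$, and the monoidal-functor structure on each $\phi_h$ from attaching cylinders carrying the gauge homotopy $h$ to the legs of the pair of pants, with all coherences forced by homotopy invariance. The paper's own proof is considerably terser --- it defers the associators, unitors, and their coherence entirely to the non-equivariant argument in \cite{BDSPV153D} and spells out only the equivariance constraint $\kappa_g : \phi_g \circ \otimes \cong \otimes \circ (\phi_g \boxtimes \phi_g)$ via the picture of a pair of pants with cancellable homotopies glued in --- so your outline is in fact more detailed on the monoidal-coherence side, but there is no substantive difference in strategy.
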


\begin{proof} We have already given the monoidal product. The necessary associators and unitors can be found just as in the non-equivariant case treated in \cite{BDSPV153D}. Hence, the only thing left to prove is the fact that the action of $G$ on $\cat{C}^Z$ by \eqref{eqnactionofGbyequiv} is by monoidal functors: Since the disk is contractible, for $g\in G$ there is a natural isomorphism
	\begin{center}
	\begin{tikzpicture}[scale=2, implies/.style={double,double equal sign distance,-implies},
	dot/.style={shape=circle,fill=black,minimum size=2pt,
		inner sep=0pt,outer sep=2pt},]
	\node (A1) at (0,1) {$\FinVect$};
	\node (A2) at (2,1) {$\cat{C}_{1}^Z$};
	\node (B1) at (1,0.5) {};
	\node (B2) at (2,0) {$\cat{C}_{1}^Z$};
	\path[->,font=\scriptsize]
	(A1) edge node[above]{$\eta$} (A2)
	(A2) edge node[right]{$\phi_g$} (B2)
	(A1) edge node[below]{$\eta$} (B2);
	\draw (A2) edge[implies] node[above] {\scriptsize$\cong$} (B1);
	\end{tikzpicture}.
	\end{center}
	Next we have to exhibit natural isomorphisms
	\begin{center}
	\begin{tikzpicture}[scale=2, implies/.style={double,double equal sign distance,-implies},
	dot/.style={shape=circle,fill=black,minimum size=2pt,
		inner sep=0pt,outer sep=2pt},]
	\node (A1) at (0,1) {$\cat{C}_a^Z \boxtimes \cat{C}_b^Z$};
	\node (A2) at (2,1) {$\cat{C}_{ab}^Z$};
	\node (B1) at (0,0) {$\cat{C}_{gag^{-1}}^Z \boxtimes \cat{C}_{gbg^{-1}}^Z$};
	\node (B2) at (2,0) {$\cat{C}_{gabg^{-1}}^Z$};
	\path[->,font=\scriptsize]
	(A1) edge node[above]{$\otimes$} (A2)
	(A1) edge node[left]{$\phi_g\boxtimes \phi_g $} (B1)
	(A2) edge node[right]{$\phi_g$} (B2)
	(B1) edge node[below]{$\otimes$} (B2);
	\draw (A2) edge[implies] node[above] {\scriptsize$\kappa_g\ $} (B1);
	\end{tikzpicture}
	\end{center} for $a,b,g\in G$.
To obtain the isomorphism $\kappa_g$ note that the clockwise composition is naturally isomorphic to the evaluation of $Z$ on
	\begin{center}
\vspace*{0.5cm}
\includegraphics[width=0.8\textwidth]{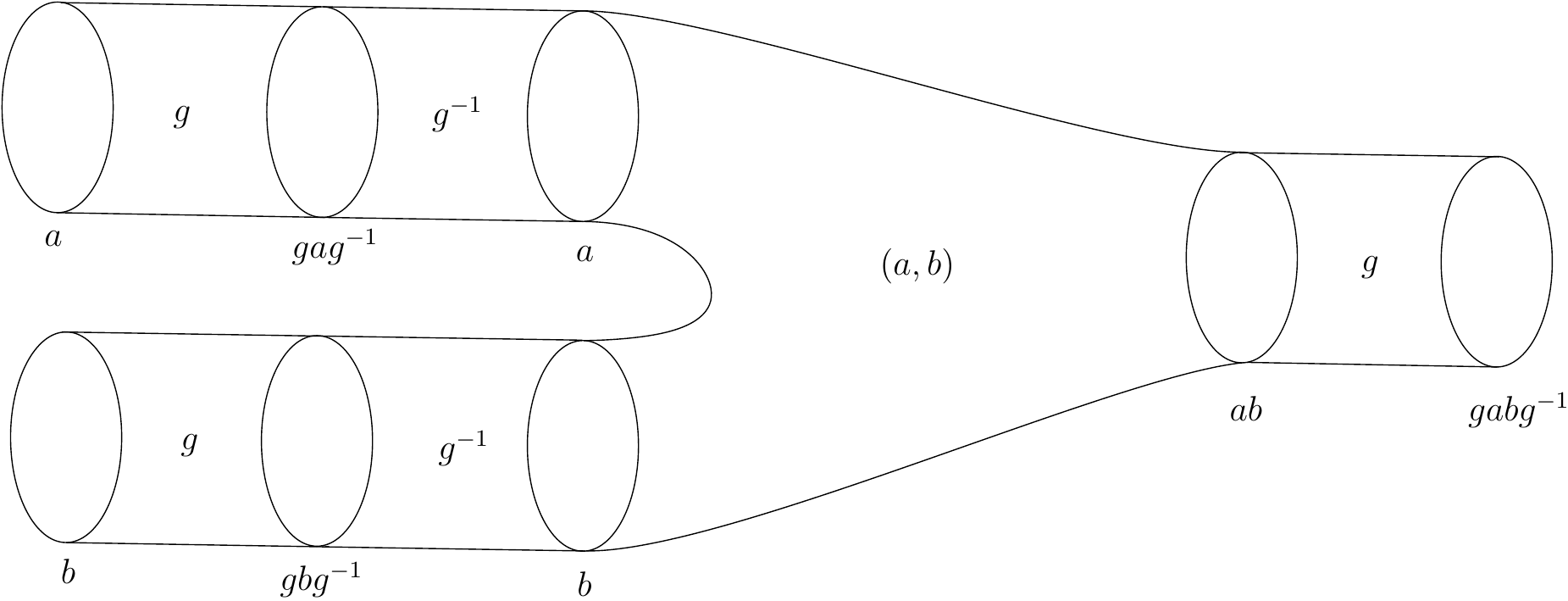}.
\vspace*{0.5cm}
\end{center} 
We have just added on the left cancellable homotopies representing $g$. But this is naturally isomorphic to the counterclockwise composition because this bordism can be seen as a cylinder with the homotopy $g$ on it followed by a pair of pants with ingoing bundles $gag^{-1}$ and $gbg^{-1}$ (the bundle decoration of pair of pants is determined by the ingoing bundles). From this way of constructing $\kappa_g$ it follows that it satisfies the necessary coherence conditions.       
\end{proof}

The following observations allow us to compute a 3-2-1-dimensional $G$-equivariant topological field theory $Z$ on surfaces decorated with $G$-bundles
just by means of the monoidal structure on $\cat{C}^Z$: The monoidal product in $\cat{C}^Z$ is built from the 2-linear maps
\begin{align}
\otimes_{g,h} \ :\ \cat{C}_g^Z \boxtimes \cat{C}_h^Z \to \cat{C}_{gh}^Z \end{align} obtained by evaluation on the pair of pants decorated with bundles as explained above. Evaluation $Z$ on the same manifold read backwards yields 2-linear maps
\begin{align} \Delta_{g,h}\ :\ \cat{C}_{gh}^Z \to \cat{C}_g^Z \boxtimes \cat{C}_h^Z \ .\end{align} The direct generalization of the adjunction relation in \cite{BDSPV153D} gives us the adjunction
\begin{align} \otimes_{g,h} \dashv \Delta_{g,h} \label{eqnrelprodcoprod}\end{align} in $\TwoVect$. The same arguments apply to the monoidal unit
\begin{align} \eta : \FinVect \to \cat{C}_1^Z \end{align}
and the evaluation of $Z$ on the manifold read backwards, namely
\begin{align}
\varepsilon : \cat{C}_1^Z \to \FinVect\ ,\end{align} i.e.\ we obtain the adjunction
\begin{align} \eta \dashv \varepsilon\label{eqnunitcounitadj2vect}\end{align} in $\TwoVect$

In order to use these adjunctions, we recall from \cite[Section~2.2]{BDSPV153D}\label{bimodsppage}  some needed facts on the symmetric monoidal bicategory $\Bimod$ of 2-vector spaces, bimodules (here a bimodule from $\cat{V}$ to $\cat{W}$ between $\mathbb{C}$-linear categories $\cat{V}$ and $\cat{W}$ is a functor $P: \cat{V}^\text{\normalfont opp} \boxtimes \cat{W} \to \FinVect$) and natural transformations. 
The composition of bimodules $P: \cat{U}^\text{\normalfont opp} \boxtimes \cat{V} \to \FinVect$ and $Q: \cat{V}^\text{\normalfont opp} \boxtimes \cat{W}\to\FinVect$ is the bimodule $Q\circ P : \cat{U}^\text{\normalfont opp} \boxtimes \cat{W} \to \FinVect$ given by the coend
\begin{align}
(Q\circ P)(U, W) := \int^{V \in\cat{V}} Q(V,W)\otimes P(U,V) \myforall U \in \cat{U}, \quad W \in \cat{W},
\end{align} see \cite{maclanecat} for an introduction to coends.
Any
2-linear map $F: \cat{V} \to \cat{W}$ gives rise to a bimodule $F_* : \cat{W}^\text{\normalfont opp} \boxtimes \cat{V} \to \FinVect$ by
\begin{align} F_*(W,V):= \Hom_\cat{W}(W,FV) \myforall V\in \cat{V}, \quad W \in \cat{W}.\end{align}This assignment extends to a 2-functor
\begin{align} ?_* : \TwoVect \to \Bimod.\end{align} The functoriality of $?_*$ entails that for 2-linear maps $F: \cat{U} \to \cat{V}$ and $G: \cat{V} \to \cat{W}$
\begin{align}
\Hom_\cat{W} (W,GFU) \cong \int^{V \in \cat{V}} \Hom_\cat{W} (W,GV) \otimes \Hom_\cat{V}(V,FU) \myforall U \in \cat{U},\quad W \in \cat{W}\label{gluinglawbimodules}
\end{align} by a canonical isomorphism of vector spaces. 
Note that $F:\cat{V} \to \cat{W}$ also gives rise to a bimodule $F^* : \cat{V}^\text{\normalfont opp} \boxtimes \cat{W} \to \Vect$ by
\begin{align} F^*(V,W) := \Hom_\cat{W}(FV,W) \myforall V\in \cat{V}, \quad W \in \cat{W},\end{align} which is related to $F_*$ by the adjunction
\begin{align}
F_* \dashv F^*\label{eqnadjunctionrelbimod}
\end{align} in $\Bimod$.

Now from \eqref{eqnrelprodcoprod} we first deduce
\begin{align} (\otimes_{g,h})_* \dashv (\Delta_{g,h})_*, \end{align} but by \eqref{eqnadjunctionrelbimod} also
\begin{align}
(\otimes_{g,h})_* \dashv \otimes_{g,h}^*.
\end{align} Uniqueness of adjoints yields a canonical isomorphism
\begin{align}
(\Delta_{g,h})_* \cong \otimes_{g,h}^*.\end{align} If we apply this also to \eqref{eqnunitcounitadj2vect}, we have proven the following:

\begin{proposition}[]
Let $G$ be a finite group and $g,h \in G$.
For a 3-2-1-dimensional $G$-equivariant topological field theory $Z$ we obtain the following adjunction relations for the structure functors obtained from surfaces with boundary:
\begin{myenumerate}
\item $(\Delta_{g,h})_* \cong \otimes_{g,h}^*$

\item and $\varepsilon_* \cong \eta^*$

\end{myenumerate} 
\end{proposition}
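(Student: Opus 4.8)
The plan is to obtain both isomorphisms purely formally, by combining the fact that $?_* : \TwoVect \to \Bimod$ preserves adjunctions with the uniqueness of adjoints in a bicategory, and then to sketch a hands-on cross-check. No genuinely new geometry is needed: everything follows from the adjunctions \eqref{eqnrelprodcoprod} and \eqref{eqnunitcounitadj2vect} already established from the bordism pictures, together with the general adjunction \eqref{eqnadjunctionrelbimod}.

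For part (a), I would first note that $?_* : \TwoVect \to \Bimod$ is a (weak) $2$-functor, so it carries the adjoint pair $\otimes_{g,h} \dashv \Delta_{g,h}$ of \eqref{eqnrelprodcoprod} to an adjoint pair $(\otimes_{g,h})_* \dashv (\Delta_{g,h})_*$ in $\Bimod$. On the other hand, \eqref{eqnadjunctionrelbimod} applied to the $2$-linear map $\otimes_{g,h}$ gives $(\otimes_{g,h})_* \dashv \otimes_{g,h}^*$. Thus $(\Delta_{g,h})_*$ and $\otimes_{g,h}^*$ are both right adjoints of the single $1$-morphism $(\otimes_{g,h})_*$ in $\Bimod$; since right adjoints in a bicategory are unique up to a canonical isomorphism (the mate of the identity), we get $(\Delta_{g,h})_* \cong \otimes_{g,h}^*$. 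Part (b) is the identical argument with $\eta \dashv \varepsilon$ from \eqref{eqnunitcounitadj2vect} in place of $\otimes_{g,h} \dashv \Delta_{g,h}$: $?_*$ sends it to $\eta_* \dashv \varepsilon_*$, \eqref{eqnadjunctionrelbimod} gives $\eta_* \dashv \eta^*$, and uniqueness of right adjoints yields $\varepsilon_* \cong \eta^*$.

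The only points that deserve care, rather than a real obstacle, are twofold. First, one should make sure that the construction $?_*$ recalled around \eqref{gluinglawbimodules} is a pseudofunctor in the strong enough sense that it preserves adjoint pairs — this is the standard fact that pseudofunctors preserve adjunctions, with the unit and counit of $(\otimes_{g,h})_* \dashv (\Delta_{g,h})_*$ obtained by applying $?_*$ to those of $\otimes_{g,h} \dashv \Delta_{g,h}$ and inserting the coherence $2$-cells of $?_*$. Second, one should keep the resulting isomorphism canonical (defined as a mate), so that it is natural in $g$ and $h$ and compatible with the further structure used in later sections. As a sanity check one can also verify (a) directly: unpacking \eqref{eqnrelprodcoprod} at the level of hom-spaces gives, for all objects $W \in \cat{C}_g^Z \boxtimes \cat{C}_h^Z$ and $V \in \cat{C}_{gh}^Z$, a natural isomorphism $\Hom_{\cat{C}_g^Z \boxtimes \cat{C}_h^Z}(W, \Delta_{g,h} V) \cong \Hom_{\cat{C}_{gh}^Z}(\otimes_{g,h} W, V)$, which is precisely an isomorphism $(\Delta_{g,h})_* \cong \otimes_{g,h}^*$ of the two bimodules in question; the case of $\eta$ and $\varepsilon$ is verified the same way.
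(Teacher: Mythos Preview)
Your proposal is correct and follows exactly the same route as the paper: apply the 2-functor $?_*$ to the adjunctions $\otimes_{g,h}\dashv\Delta_{g,h}$ and $\eta\dashv\varepsilon$ from \eqref{eqnrelprodcoprod} and \eqref{eqnunitcounitadj2vect}, compare with the generic adjunction $F_*\dashv F^*$ of \eqref{eqnadjunctionrelbimod}, and invoke uniqueness of right adjoints. The additional remarks you make about canonicity and the hom-space sanity check are fine but not needed for the argument.
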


 If we use the notation $\eta_*(X):= \eta_*(X,\mathbb{C})$ and the dual convention for $\varepsilon$, we arrive at:

\begin{corollary}[]\label{korbimodG}
For $g,h \in G$ we have
\begin{myenumerate}
\item ${\otimes_{g,h}}_*(W,X\boxtimes Y) = \Hom_{\cat{C}^Z_{gh}}(W,X\otimes Y)$ for all $X \in \cat{C}_g^Z, Y\in \cat{C}_h^Z$ and $W\in \cat{C}_{gh}^Z$,

\item ${\Delta_{g,h}}_*(Y\boxtimes W,X) = \Hom_{\cat{C}^Z_{gh}}(Y\otimes W,X)$ for all $X\in \cat{C}_{gh}^Z$, $Y\in \cat{C}_g^Z$ and $W\in \cat{C}_{h}^Z$, 

\item $\eta_*(X) = \Hom_{\cat{C}_1^Z}(X,I)$ for all $X\in \cat{C}_1^Z$
 
\item and $\varepsilon_*(X) = \Hom_{\cat{C}_1^Z}(I,X)$ for all $X\in \cat{C}_1^Z$.

\end{myenumerate}
\end{corollary}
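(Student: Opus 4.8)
The plan is to deduce all four identities by combining the definitions of the bimodules $F_*$ and $F^*$ attached to a $2$-linear map $F$, as recalled on page~\pageref{bimodsppage}, with the adjunction isomorphisms established in the preceding Proposition. Nothing beyond careful bookkeeping of the $\Hom$-spaces and their variances is needed.

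First I would dispose of parts (a) and (c), which do not even use the Proposition: they are immediate from the definition of the $2$-functor $?_* : \TwoVect \to \Bimod$, which sends a $2$-linear map $F : \cat{V}\to\cat{W}$ to the bimodule $F_*(W,V) = \Hom_{\cat{W}}(W,FV)$. Applying this to $F = \otimes_{g,h} : \cat{C}_g^Z\boxtimes\cat{C}_h^Z \to \cat{C}_{gh}^Z$ and to the object $X\boxtimes Y$ gives ${\otimes_{g,h}}_*(W,X\boxtimes Y) = \Hom_{\cat{C}^Z_{gh}}(W,\otimes_{g,h}(X\boxtimes Y)) = \Hom_{\cat{C}^Z_{gh}}(W,X\otimes Y)$, since $\otimes_{g,h}$ implements the monoidal product on the relevant sectors; this is (a). For $F = \eta : \FinVect \to \cat{C}_1^Z$, the convention $\eta_*(X) := \eta_*(X,\mathbb{C})$ together with the fact that $I = \eta(\mathbb{C})$ by construction yields $\eta_*(X) = \Hom_{\cat{C}_1^Z}(X,\eta(\mathbb{C})) = \Hom_{\cat{C}_1^Z}(X,I)$, which is (c).

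For parts (b) and (d) I would invoke the preceding Proposition, providing the canonical isomorphisms of bimodules $(\Delta_{g,h})_* \cong \otimes_{g,h}^*$ and $\varepsilon_* \cong \eta^*$, and then unwind the definition $F^*(V,W) = \Hom_{\cat{W}}(FV,W)$. Evaluating $\otimes_{g,h}^*$ on $(Y\boxtimes W, X)$ gives $\Hom_{\cat{C}^Z_{gh}}(\otimes_{g,h}(Y\boxtimes W),X) = \Hom_{\cat{C}^Z_{gh}}(Y\otimes W,X)$, which together with $(\Delta_{g,h})_* \cong \otimes_{g,h}^*$ proves (b); and evaluating $\eta^*$ on $(\mathbb{C},X)$ gives $\Hom_{\cat{C}_1^Z}(\eta(\mathbb{C}),X) = \Hom_{\cat{C}_1^Z}(I,X)$, which with $\varepsilon_* \cong \eta^*$ and the dual convention for $\varepsilon$ gives (d). The only subtlety — and essentially the only content of the statement — is to keep track of which factor of a Deligne product is taken with the opposite orientation, so that in, say, (b) the object $Y\boxtimes W$ really is the contravariant argument of $(\Delta_{g,h})_*$ and $X$ the covariant one, matching the roles on the right-hand side. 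Since the bicategory $\Bimod$ and the $2$-functor $?_*$ have already been set up consistently, there is no genuine obstacle here: the corollary is simply a transcription of the preceding Proposition into $\Hom$-space notation.
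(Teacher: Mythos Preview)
Your proposal is correct and matches the paper's approach: the corollary is stated immediately after the preceding Proposition with only the remark about the convention $\eta_*(X):=\eta_*(X,\mathbb{C})$, so the intended proof is exactly the unwinding of the definitions of $F_*$ and $F^*$ together with the isomorphisms $(\Delta_{g,h})_* \cong \otimes_{g,h}^*$ and $\varepsilon_* \cong \eta^*$ that you carry out.
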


 Corollary~\ref{korbimodG} allows us to compute the evaluation of an extended $G$-equivariant topological field theory on any surface decorated with bundles in terms of the monoidal structure.

\begin{example}\label{exbentcylinder}
As an illustration,
 let us compute the evaluation 
\begin{align} Z(B_g) : \cat{C}_g^Z \boxtimes \cat{C}_{g^{-1}}^Z \to \FinVect \end{align} of
a 3-2-1-dimensional extended $G$-equivariant topological field theory $Z$ on the bent cylinder $B_g$ decorated with bundles as on page~\pageref{bentcylinderfig}. By cutting $B_g$ into a pair of pants and a cup we find via functoriality of $Z$, \eqref{gluinglawbimodules} and Corollary~\ref{korbimodG}
\begin{align}
Z(B_g)(X,Y) \cong \int^{W\in \cat{C}_1^Z} \Hom_{\cat{C}_1^Z} (I,W) \otimes \Hom_{\cat{C}_1^Z}(W,X \otimes Y) \myforall X \in \cat{C}_g^Z, \quad Y \in \cat{C}_{g^{-1}}^Z.
\end{align} 
By the co-Yoneda Lemma, see e.g. \cite[Example~1.4.6]{riehlhomotopy}, this implies
\begin{align}
Z(B_g)(X,Y) \cong \Hom_{\cat{C}_1^Z}(I,X\otimes Y), 
\end{align} i.e.\ $Z(B_g)(X,Y)$ is given by the invariants in the monoidal product $X\otimes Y$. 
\end{example}

\subsection{Duality\label{secduality}}
In the next step we prove that $\cat{C}^Z$ is also rigid:

\begin{proposition}[]\label{satzczduals}
For any extended $G$-equivariant topological field theory $Z : G\text{-}\Cob(3,2,1) \to \TwoVect$ the monoidal category $\cat{C}^Z$ has duals.  
\end{proposition}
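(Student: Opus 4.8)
The plan is to follow the proof of rigidity in the non-equivariant 3-2-1-dimensional case \cite{BDSPV153D}, the only genuinely new bookkeeping being that the dual of an object lying in the sector $\cat{C}_g^Z$ has to be produced in the sector $\cat{C}_{g^{-1}}^Z$, so that $X^\vee\otimes X$ and $X\otimes X^\vee$ land in the neutral sector $\cat{C}_1^Z$, where the monoidal unit $I$ from Proposition~\ref{satzequivariantmonoidalstructure} lives. A first, cheap observation is that $\cat{C}_{g^{-1}}^Z \simeq (\cat{C}_g^Z)^{\mathrm{op}}$: the circle with holonomy $g$ and the orientation-reversed circle (carrying holonomy $g^{-1}$ in the fixed reference orientation) are dual objects of the symmetric monoidal bicategory $G\text{-}\Cob(3,2,1)$, which has duals by Definition~\ref{defmscbordcattarget}, and $Z$ preserves duals of objects; hence $\cat{C}_{g^{-1}}^Z$ is a dual of $\cat{C}_g^Z$ in $\TwoVect$. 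This already provides the equivalence $\cat{C}_g^Z \to (\cat{C}_{g^{-1}}^Z)^{\mathrm{op}}$ that will underlie the functor $(-)^\vee$; the work is to upgrade it to genuine monoidal duality data.

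Concretely, I would work with the bent cylinder $B_g : \sphere^1 \sqcup \sphere^1 \to \emptyset$ decorated with the holonomies $g$ and $g^{-1}$ as on page~\pageref{bentcylinderfig}, together with the same manifold read in the opposite direction, $\overline{B}_g : \emptyset \to \sphere^1 \sqcup \sphere^1$, carrying the decoration that is unique up to homotopy relative boundary once the boundary holonomies are fixed. Evaluating $Z$ yields a pairing $Z(B_g) : \cat{C}_g^Z \boxtimes \cat{C}_{g^{-1}}^Z \to \FinVect$ and a copairing, i.e.\ an object $\omega_g := Z(\overline{B}_g)(\mathbb{C}) \in \cat{C}_g^Z \boxtimes \cat{C}_{g^{-1}}^Z$ (up to a swap of tensor factors, depending on conventions). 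The key steps are then the following. First, \emph{identify pairing and copairing}: cutting $B_g$ into a pair of pants and a cup exactly as in Example~\ref{exbentcylinder}, together with functoriality of $Z$, the gluing law \eqref{gluinglawbimodules}, Corollary~\ref{korbimodG} and the co-Yoneda lemma, gives $Z(B_g)(X,Y) \cong \Hom_{\cat{C}_1^Z}(I, X\otimes Y)$ naturally, and the same decomposition read backwards identifies $\omega_g \cong \Delta_{g,g^{-1}}(I)$, which by nondegeneracy of the pairing splits as $\bigoplus_k X_k \boxtimes X_k^\vee$ over a representative set $\{X_k\}$ of simple objects of $\cat{C}_g^Z$, with $X_k^\vee \in \cat{C}_{g^{-1}}^Z$; this \emph{defines} the dual on simples, and one extends additively — no simplicity of $I$ is needed, since $\cat{C}^Z$ is finitely semisimple. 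Second, \emph{extract the duality morphisms}: the evaluation $d_X : X^\vee \otimes X \to I$ and coevaluation $b_X : I \to X \otimes X^\vee$ in $\cat{C}_1^Z$ are obtained as the appropriate (co)units of the adjunctions $\otimes_{g,g^{-1}} \dashv \Delta_{g,g^{-1}}$ from \eqref{eqnrelprodcoprod} and $\eta \dashv \varepsilon$ from \eqref{eqnunitcounitadj2vect}, evaluated at $I$ respectively $\mathbb{C}$ and read off the simple-object decomposition of $\omega_g$; equivalently, $d_X$ and $b_X$ are $Z$ applied to the evident 3-dimensional bordisms with corners (pillows) exhibiting the two snakes. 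Third, \emph{verify the zigzag identities}: gluing $\overline{B}_g$ to $B_g$ along one of the two circles produces a 2-dimensional bordism diffeomorphic rel boundary to the cylinder over $\sphere^1$, and the two diffeomorphisms realizing the two snakes are exactly those of the non-equivariant argument; by homotopy invariance of $Z$ they become the required equalities of 2-morphisms, hence of morphisms in $\cat{C}_1^Z$, provided the decorating maps to $BG$ agree.

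The main obstacle — in fact the only point where the equivariance enters at all — is that last compatibility of the maps to $BG$ on the 3-manifolds implementing the zigzags, and it is mild: those manifolds deformation retract onto their cylindrical cores, so by Lemma~\ref{lemmamappingspacebundles} the relevant mapping spaces into $BG$ are nerves of groupoids of bundles over cylinders over $\sphere^1$, and the decorations rel boundary are therefore determined up to exactly the equivalence that $Z$ cannot detect; homotopy invariance then closes the gap. The remaining care is purely in the orientation/holonomy bookkeeping — a slice of $B_g$ at the ``$g$'' end carries holonomy $g$, the oppositely oriented end carries $g^{-1}$, cf.\ the discussion preceding Example~\ref{exbentcylinder} — which is precisely what places $X^\vee$ in $\cat{C}_{g^{-1}}^Z$ and $X^\vee\otimes X$ in the neutral sector. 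Thus I expect the proof to be essentially a transcription of \cite{BDSPV153D} with the holonomy labels carried along, the bulk of the (routine but lengthy) effort going into keeping the decompositions and coherences of the first two steps straight.
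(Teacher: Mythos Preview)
Your outline tracks the paper's proof in spirit---both transcribe \cite{BDSPV153D} with holonomy labels carried along---but you skip the one step where the argument has real content. The paper, following \cite[Propositions~4.2 and 4.8]{BDSPV153D}, does \emph{not} read off both zigzag identities from diffeomorphisms. Rather, it slices a decorated 1-morphism $N_g$ (itself diffeomorphic rel boundary to the identity cylinder on $(\sphere^1,g)$), passes to bimodules, and obtains for each $X\in\cat{C}_g^Z$ an object $X'\in\cat{C}_{g^{-1}}^Z$ together with morphisms $\alpha:I\to X'\otimes X$ and $\beta:X\otimes X'\to I$ such that \emph{one} snake composite is $\id_X$. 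The other composite through $X'\otimes X\otimes X'$ is then only an idempotent on $X'$; splitting it (by finite semisimplicity of $\cat{C}_{g^{-1}}^Z$) produces the honest dual $X^*$ as a retract of $X'$, and a short graphical check gives both zigzags for $X^*$.

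Your third step asserts that both snakes follow from ``the two diffeomorphisms realizing the two snakes\ldots\ exactly those of the non-equivariant argument'', but that is not how \cite{BDSPV153D} proceeds: the idempotent-splitting trick is precisely their device for obtaining the second snake. Even with your cleaner choice of $X_k^\vee$ as the simple partner of $X_k$ in the decomposition of $\omega_g$, and of $d_X,b_X$ via adjunction (co)units, nothing in your outline pins down the second snake composite as $\id_{X^\vee}$ rather than some other endomorphism of the simple $X^\vee$. One might hope to close the gap by also invoking the other functor-level snake $N_{g^{-1}}\cong\text{cylinder}$ and carefully matching the duality morphisms extracted from the two sides, but that matching is exactly the work you have not done; the idempotent-splitting step is short and sidesteps it entirely.
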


The duals considered here are \emph{left} duals in the terminology of \cite{egno}.
An argument analogous to the one given in the proof below shows the existence of right duals.

\begin{proof}
The proof uses the appropriate equivariant versions of the arguments given in \cite{BDSPV153D} in the non-equivariant case: For a group element $g\in G$ we denote the 1-morphism
\begin{center}
\vspace*{0.5cm}
\includegraphics[width=0.65\textwidth]{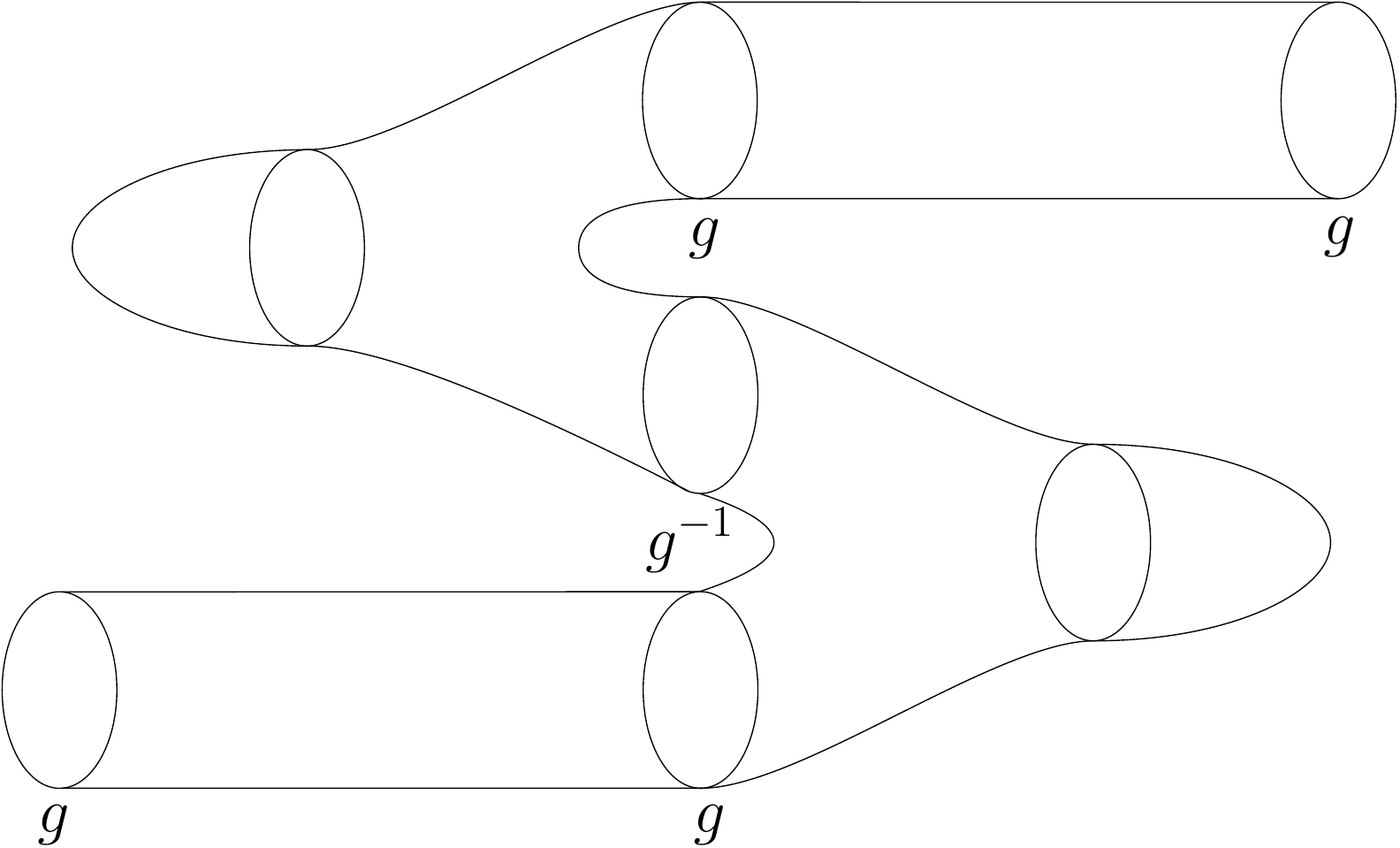},
\vspace*{0.5cm}
\end{center} in $G\text{-}\Cob(3,2,1)$ by $N_g$. This is the 1-morphism appearing in the proof of \cite[Proposition~4.2]{BDSPV153D} appropriately decorated with bundles. It is diffeomorphic to the cylinder with $g$ on the ingoing and outgoing circle and the identity homotopy on it. This gives us a natural isomorphism $\id_{\cat{C}_g^Z} \cong Z(N_g)$ of 2-linear maps $\cat{C}_g^Z \to \cat{C}_g^Z$. By slicing up $N_g$ as indicated in the above picture and using the functoriality and monoidality of $Z$ we find yet another 2-linear map $\cat{C}_g^Z \to \cat{C}_g^Z$, which is also naturally isomorphic to the identity functor. By looking at the resulting isomorphism for the corresponding bimodules $(\cat{C}_g^Z)^\text{\normalfont opp} \boxtimes \cat{C}_g^Z \to \Vect$ one deduces as in \cite[Propositions~4.2 and 4.8]{BDSPV153D} that for any $X\in \cat{C}_g^Z$ there is an object $X' \in \cat{C}_{g^{-1}}^Z$ together with morphisms $\alpha : I \to X' \otimes X$ and $\beta : X \otimes X' \to I$ such that
\begin{center}
\vspace*{0.5cm}
\includegraphics[width=0.3\textwidth]{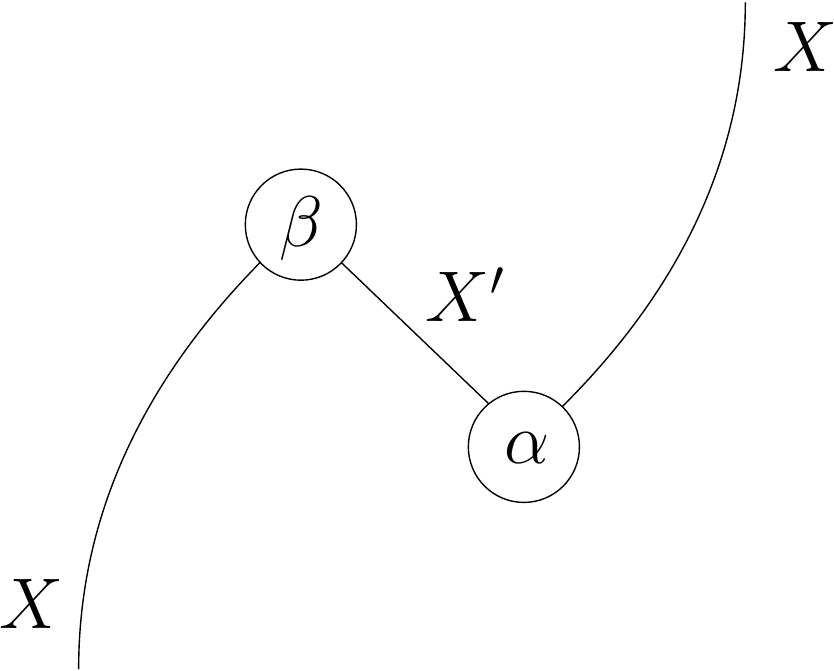}
\vspace*{0.5cm}
\end{center} is the identity of $X$. Again, as in the proof of \cite[Propositions~4.8]{BDSPV153D}, this implies that the endomorphism
\begin{center}
\vspace*{0.5cm}
\includegraphics[width=0.3\textwidth]{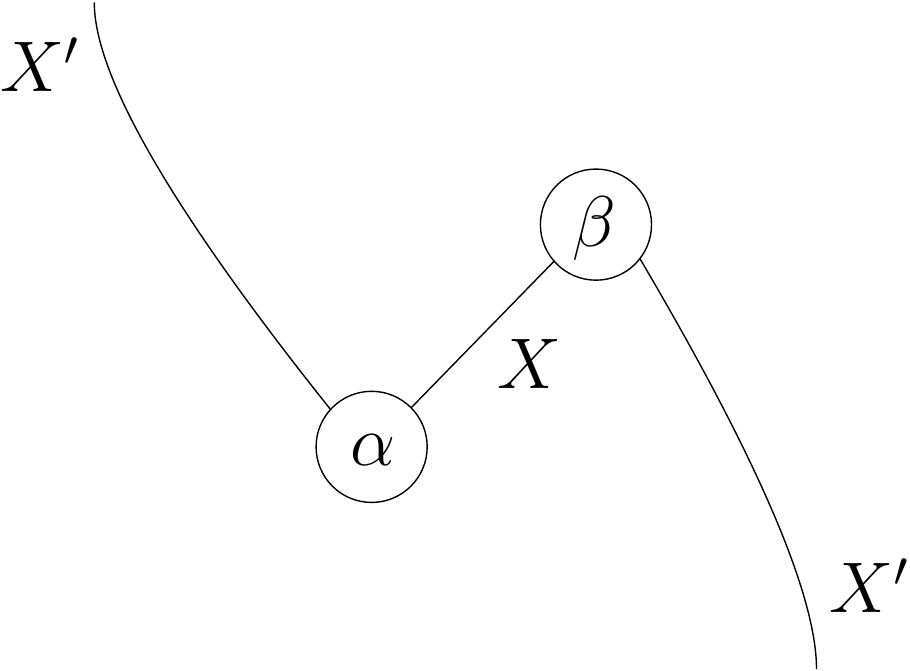}
\vspace*{0.5cm}
\end{center} of $X'$ is an idempotent, and by finite (co)completeness of $\cat{C}_{g^{-1}}^Z$ it splits into morphisms $\gamma : X' \to X^*$ and $\delta : X^* \to X'$ in $\cat{C}_{g^{-1}}^Z$ such that $\delta \circ \gamma = \id_{X^*}$. A direct computation in the graphical calculus shows that $X^* \in \cat{C}_{g^{-1}} ^Z$ is dual to $X$ with evaluation $\operatorname{eva}_X := \beta \circ (\delta \otimes \id_X)$ and coevaluation $\operatorname{coeva}_X := (\id_X \otimes \gamma )\circ \alpha$.       
\end{proof}

\begin{corollary}[]\label{korczduality}
	For any extended $G$-equivariant topological field theory $Z : G\text{-}\Cob(3,2,1) \to \TwoVect$ the duality in the category $\cat{C}^Z$ has the following properties:
	\begin{myenumerate}
		
		\item The dual $X^*$ of an object $X\in \cat{C}_g^Z$ lives in the sector $\cat{C}_{g^{-1}}^Z$.\label{korczdualitya}
		
		\item For $g,h \in G$ and $X \in \cat{C}_g^Z$ the object $h.X^*$ is dual to $h.X$, i.e.\ $(h.X)^* \cong h.X^*$. \label{korczdualityb}
		
		\end{myenumerate}
	\end{corollary}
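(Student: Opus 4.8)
The plan is to deduce both statements directly from the construction of duals carried out in the proof of Proposition~\ref{satzczduals}, together with the fact that the equivalences $\phi_h$ are monoidal. For part~\ref{korczdualitya}, I would simply observe that the proof of Proposition~\ref{satzczduals} produces the dual $X^*$ of $X \in \cat{C}_g^Z$ as a retract (via the splitting $\gamma,\delta$) of the auxiliary object $X' \in \cat{C}_{g^{-1}}^Z$; since each homogeneous component $\cat{C}_{g^{-1}}^Z$ is closed under retracts (it is a $\mathbb{C}$-linear finitely semisimple, hence idempotent-complete, subcategory), it follows that $X^* \in \cat{C}_{g^{-1}}^Z$. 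No further work is needed here.

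For part~\ref{korczdualityb}, the key input is Proposition~\ref{satzequivariantmonoidalstructure}: each $\phi_h : \cat{C}^Z \to \cat{C}^Z$ carries the structure of a monoidal functor, with coherence isomorphism $\kappa_h : \phi_h(X \otimes Y) \xrightarrow{\cong} \phi_h X \otimes \phi_h Y$ (restricted appropriately to sectors) and $\phi_h(I) \cong I$. A monoidal functor sends a duality datum to a duality datum: applying $\phi_h$ to the evaluation $\operatorname{eva}_X : X \otimes X^* \to I$ and coevaluation $\operatorname{coeva}_X : I \to X^* \otimes X$ and composing with the coherence isomorphisms $\kappa_h$ and $\phi_h(I)\cong I$ yields morphisms
\begin{align}
h.X \otimes h.X^* \longrightarrow I, \qquad I \longrightarrow h.X^* \otimes h.X
\end{align}
which satisfy the zig-zag (snake) identities because $\operatorname{eva}_X$ and $\operatorname{coeva}_X$ do and because $\kappa_h$ satisfies the monoidal coherence conditions (established at the end of the proof of Proposition~\ref{satzequivariantmonoidalstructure}). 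Hence $h.X^*$ is a left dual of $h.X$, and by uniqueness of duals there is a canonical isomorphism $(h.X)^* \cong h.X^*$. The sector assignment is consistent: $h.X \in \cat{C}_{hgh^{-1}}^Z$ by \eqref{eqnactionofGbyequiv}, while $h.X^* \in \cat{C}_{h g^{-1} h^{-1}}^Z = \cat{C}_{(hgh^{-1})^{-1}}^Z$, which is exactly where the dual of $h.X$ must live by part~\ref{korczdualitya}.

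I do not anticipate a genuine obstacle. The only point requiring a little care is verifying that the image of a duality datum under a monoidal functor again satisfies the snake identities; this is a standard diagram chase using the hexagon/coherence axioms for $\kappa_h$ and the naturality of the associators, and it is precisely the kind of routine calculation in the graphical calculus already invoked repeatedly in the proofs of Propositions~\ref{satzczduals} and~\ref{satzequivariantmonoidalstructure}. If one wishes to be economical, one can cite the general fact that any (strong) monoidal functor preserves dual objects, applied to $\phi_h$, and then just track the grading.
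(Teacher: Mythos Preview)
Your proposal is correct and follows essentially the same approach as the paper: the paper's proof simply states that \ref{korczdualitya} is clear from the construction in Proposition~\ref{satzczduals} (and notes the necessity since $X\otimes X^*$ must lie in the neutral sector), and that \ref{korczdualityb} follows directly from the fact that $G$ acts by monoidal functors (Proposition~\ref{satzequivariantmonoidalstructure}). You have spelled out the retract argument for \ref{korczdualitya} and the standard ``monoidal functors preserve duals'' argument for \ref{korczdualityb} more explicitly than the paper does, but the substance is identical.
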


\begin{proof}
	Assertion~\ref{korczdualitya} is clear from the proof of Proposition~\ref{satzczduals} and also a necessity since $X\otimes X^*$ needs to be in the neutral sector. Assertion~\ref{korczdualityb} follows directly from the fact that $G$ acts by monoidal functors (Proposition~\ref{satzequivariantmonoidalstructure}).     
	\end{proof}

\subsection{Equivariant braiding\label{secbraiding}}
 The next piece of structure on the category $\cat{C}^Z$ obtained from a 3-2-1-dimensional $G$-equivariant topological field theory $Z$ is a $G$-braiding. First let us recall the relevant notion:

\begin{definition}[Equivariant braided category, after $\text{\cite[VI.2.2]{turaevhqft}}$]
	Let $G$ be a finite group $G$. A $G$-\emph{braiding} on a $G$-equivariant monoidal category $\cat{C}$ is a family of isomorphisms
	\begin{align}
		c_{X,Y} : X \otimes Y \to g. Y \otimes X \myforall X \in \cat{C}_g, \quad Y \in \cat{C}_h, \quad g,h\in G
		\end{align} which are natural in $X$ and $Y$ and satisfy an analogue of the hexagon axiom, see e.g.\ \cite[Definition~4.5]{maiernikolausschweigerteq}.
	\end{definition} 

To construct the $G$-braiding, recall  the 1-morphism in $G\text{-}\Cob(3,2,1)$ giving us the monoidal product 
can be written as the pair of pants
\begin{center}
	\vspace*{0.5cm}
	\includegraphics[width=0.25\textwidth]{trinion}
	\vspace*{0.5cm}.
\end{center}The holonomies around the ingoing circles are $g$ and $h$, respectively, and consequently the outgoing circle carries holonomy $gh$. Rotating the inner circles counterclockwise around each other while keeping the outgoing circle fixed yields a diffeomorphism of the pair of pants relative boundary. In the sense of Remark~\ref{bmkextcob}, \ref{bmkextcobdiffeo} this diffeomorphism gives rise to an invertible 2-morphism $G\text{-}\Cob(3,2,1)$, also described in detail in \cite[Lemma~3.25]{maiernikolausschweigerteq}, on which we can evaluate $Z$. As a result, we get natural isomorphisms
\begin{align} c_{X,Y} : X \otimes Y \to g.Y \otimes X \myforall X \in \cat{C}_g^Z, \quad Y \in \cat{C}_h^Z\ .\label{eqnbraidingisomorphisms}\end{align}

\begin{proposition}[]\label{satzequivbraiding}
	For any extended $G$-equivariant topological field theory $Z : G\text{-}\Cob(3,2,1) \to \TwoVect$ the $G$-equivariant monoidal category $\cat{C}^Z$ is $G$-braided by \eqref{eqnbraidingisomorphisms}.
\end{proposition}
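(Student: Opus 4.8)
The plan is to verify the $G$-braiding axioms by exhibiting, for each required coherence condition, a diffeomorphism or higher bordism in $G\text{-}\Cob(3,2,1)$ on which $Z$ can be evaluated to produce the desired equation, exactly as in the non-equivariant case treated in \cite{BDSPV153D} but now keeping careful track of the bundle decorations. First I would record that the isomorphisms \eqref{eqnbraidingisomorphisms} are natural in $X$ and $Y$: this is automatic, since they arise by evaluating $Z$ on a fixed invertible 2-morphism (coming from the braiding diffeomorphism of the pair of pants, see Remark~\ref{bmkextcob}, \ref{bmkextcobdiffeo}) and $Z$ is a 2-functor, so precomposition and postcomposition with the 2-linear maps induced by morphisms of objects of $\cat{C}^Z$ commute with $Z$ evaluated on this 2-morphism. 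The sector bookkeeping is consistent because the pair of pants with ingoing holonomies $g,h$ has outgoing holonomy $gh$, and after the counterclockwise rotation the roles of the two legs are exchanged with the first leg carrying the gauge-transformed bundle $g$ acting on $h$; this matches the target $g.Y \otimes X \in \cat{C}_{ghg^{-1}\cdot g}^Z = \cat{C}_{gh}^Z$.

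Next I would establish the two hexagon identities in the form required by \cite[Definition~4.5]{maiernikolausschweigerteq}. Each hexagon is an equality of two composites of braiding and associativity isomorphisms; geometrically, both sides are obtained by evaluating $Z$ on a three-holed sphere (the composite of two pairs of pants) equipped with a suitable braid diffeomorphism relative boundary. The key point is that the two paths of rotations of three boundary circles around each other are isotopic relative boundary as diffeomorphisms of the three-holed sphere — this is the standard fact underlying the hexagon in \cite{BDSPV153D} — and that the bundle decorations transform compatibly along this isotopy, so that the homotopies of classifying maps induced by the two rotation paths are themselves homotopic relative boundary. By the homotopy invariance property of $Z$, evaluating $Z$ on these two homotopic maps yields the same 2-morphism, which is precisely the hexagon equation. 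One also has to check compatibility of the braiding with the $G$-action: the isomorphisms $\kappa_g$ from the proof of Proposition~\ref{satzequivariantmonoidalstructure} intertwine $c_{X,Y}$ and $c_{g.X,g.Y}$ up to the structure isomorphisms, which again follows because conjugating the braid diffeomorphism of the pair of pants by the cylinder carrying the homotopy $g$ produces the braid diffeomorphism of the pair of pants with gauge-transformed decorations, and the resulting maps to $BG$ are homotopic relative boundary.

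Finally I would note that invertibility of $c_{X,Y}$ is immediate since it is $Z$ applied to an invertible 2-morphism, and that all coherence conditions connecting the $G$-braiding with the unitors reduce, as in the non-equivariant case, to the observation that the disk decorated with the trivial bundle is contractible, so that inserting or removing a cap is invisible up to canonical isomorphism. The main obstacle I anticipate is purely organizational rather than conceptual: one must make precise, for each of the finitely many coherence diagrams, the claim that ``the two bordisms-with-maps differ by a homotopy relative boundary'', which requires a careful choice of the homotopies $h$ used to realize the gauge transformations on the cylinders and a verification that gluing these homotopies along the pair-of-pants decompositions produces honestly homotopic maps into $BG$; this is exactly the kind of bookkeeping already carried out in \cite{maiernikolausschweigerteq} for equivariant Dijkgraaf-Witten theory and in the proof of Theorem~\ref{thmhatconstruction} above, so I would cite those arguments and only indicate the modifications. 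All the genuinely topological input — the isotopy classes of braid diffeomorphisms of holed spheres — is identical to \cite{BDSPV153D}.
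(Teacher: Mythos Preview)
Your proposal is correct and follows essentially the same approach as the paper: verify the hexagon axioms via the (bundle-decorated) rotation/isotopy description of the braiding 2-morphism, and deduce compatibility with the $G$-action from the fact that the rotation commutes with gluing in cylinders carrying the homotopies that define $\kappa_g$. The paper's proof is much terser than yours --- it simply asserts that the rotation description yields the hexagon and then gives the one-line observation about commuting with cylinder gluing --- but the underlying argument is the same.
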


\begin{proof}
	The above description of the relevant 2-morphism as coming from a rotation allows us to verify directly the hexagon axiom. 
	 It remains to check that the braiding is compatible with the $G$-action in the sense of \cite[Definition~4.5]{maiernikolausschweigerteq}. But this follows from the fact that the rotation giving rise to the braiding commutes with gluing in cylinders with cancellable homotopies on them, which gave rise to the structure maps for the elements of $G$ as monoidal functors
	(see the proof of Proposition~\ref{satzequivariantmonoidalstructure}).      
\end{proof}

\subsection{Twist\label{sectwist}}
 Finally, we also get an equivariant twist.

\begin{definition}[Equivariant ribbon category, after $\text{\cite[VI.2.3]{turaevhqft}}$]
	A \emph{$G$-twist} on a $G$-braided monoidal category $\cat{C}$ with dualities is a family of natural isomorphisms $\theta_X : X \to g.X$ for all $g\in G$ and $X\in\cat{C}_g$ compatible with duality and the action of $G$, see \cite[Definition~4.8]{maiernikolausschweigerteq}.
	A \emph{$G$-equivariant ribbon category} is a $G$-braided monoidal category with dualities and a $G$-twist. 
	\end{definition}

For the construction of the $G$-twist we compare the identity of $\cat{C}_g^Z$ to the equivalence $\phi_g : \cat{C}_g^Z \to \cat{C}_g^Z$. Both are obtained by evaluation of $Z$ on a cylinder with ingoing and outgoing circle labeled by $g$. But the 1-morphism which yields the identity carries the constant homotopy while the 1-morphism giving us $\phi_g : \cat{C}_g^Z \to \cat{C}_g^Z$ carries $g$ seen as a homotopy. More precisely, if $g$ is represented by the loop $\gamma : \sphere^1 \to BG$, then $\phi_g : \cat{C}_g^Z \to \cat{C}_g^Z$ is the evaluation of $Z$ on the cylinder together with the map $\widetilde \gamma : \sphere^1 \times [0,1] \to BG$ with $\widetilde \gamma(z,t) = \gamma(z\operatorname{e}^{2\pi \im t})$ for all $(z,t) \in \sphere^1 \times I$.

Consider now the Dehn twist of the cylinder, i.e.\ the diffeomorphism
\begin{align}
D : \sphere^1 \times I \to \sphere^1 \times I, \quad (z,t) \mapsto \left(z\operatorname{e}^{2\pi \im t}, t\right)
\end{align} keeping the boundary circles fixed, and observe that the pullback of the constant homotopy from $\gamma$ to $\gamma$ along $D$ is $\widetilde \gamma$. Now by Remark~\ref{bmkextcob}, \ref{bmkextcobdiffeo} we obtain a natural isomorphism from the identity of $\cat{C}_g^Z$ to $g : \cat{C}_g^Z \to \cat{C}_g^Z$, i.e.\ natural isomorphisms \begin{align}\label{eqntheeqtwists} \theta_X : X \to g.X \myforall X\in \cat{C}_g^Z\ .\end{align} 

Let us make two important observations:

\begin{xenumerate}
	
	\item The twist in the sector of some $g\in G$ represented by the loop $\gamma$ (as above) can also be obtained by evaluation of $Z$ on the 2-cell coming from the homotopy
	\begin{align}
	\sphere^1 \times I \times I \ni (z,t,s) \mapsto \gamma(z\operatorname{e}^{2\pi\im st})
	\end{align} of maps on the cylinder $\sphere^1 \times I$ going from the constant homotopy of $\gamma$ to $\widetilde \gamma$. \label{twistandactionobservation1}
	
	\item In geometric terms, the 2-cell
	\begin{center}
		\vspace*{0.5cm}
		\includegraphics[width=0.3\textwidth]{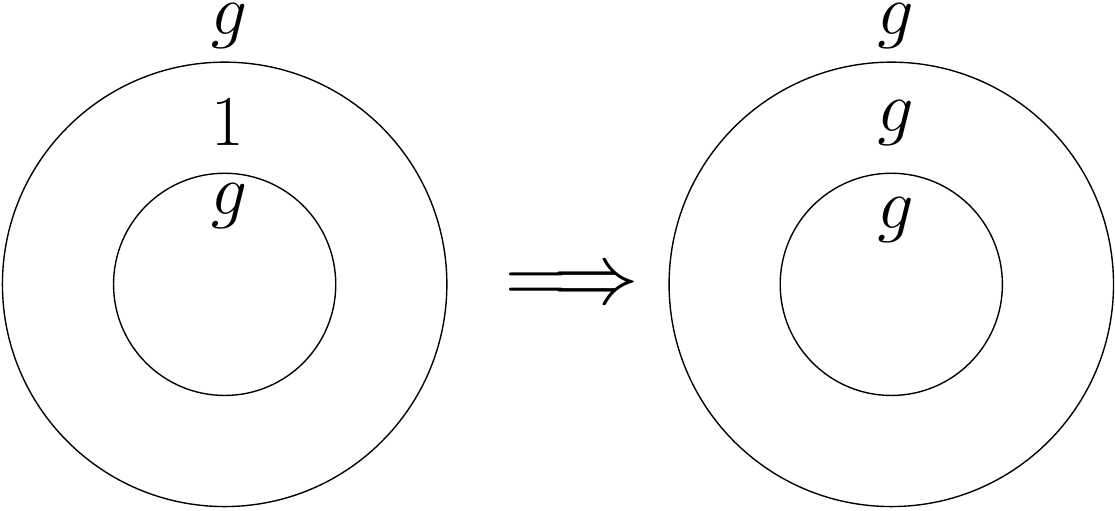}
		\vspace*{0.5cm}
	\end{center} underlying the twist can be seen as coming from a counterclockwise rotation of the ingoing circle against the outgoing one. \label{twistandactionobservation2}
	
\end{xenumerate}

\begin{proposition}[]\label{satzgribbonkat}
	For any extended $G$-equivariant topological field theory $Z : G\text{-}\Cob(3,2,1) \to \TwoVect$ the braided $G$-equivariant monoidal category $\cat{C}^Z$ is equipped with a $G$-twist by \eqref{eqntheeqtwists}, i.e.\ $\cat{C}^Z$ is a complex finitely semisimple $G$-ribbon category.
\end{proposition}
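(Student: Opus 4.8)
The plan is to verify that the family of natural isomorphisms $\theta_X : X \to g.X$ for $X \in \cat{C}_g^Z$ defined in \eqref{eqntheeqtwists} satisfies the two conditions required of a $G$-twist: compatibility with the $G$-action and compatibility with duality. Since we have already established in Propositions~\ref{satzequivariantmonoidalstructure}, \ref{satzczduals} and \ref{satzequivbraiding} that $\cat{C}^Z$ is a $G$-braided monoidal category with dualities, it only remains to check these two coherence conditions, after which the final sentence follows immediately by definition of a $G$-equivariant ribbon category. As always in this section, the strategy is to translate each algebraic identity into a statement about bordisms decorated with homotopies and then invoke the homotopy invariance of $Z$: two 2-morphisms in $G\text{-}\Cob(3,2,1)$ built from diffeomorphisms and homotopies that agree up to a homotopy relative boundary are sent by $Z$ to the same 2-morphism.

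First I would treat the compatibility of $\theta$ with the $G$-action. The required identity says that $\phi_h(\theta_X) $ and $\theta_{h.X}$ agree after suitable identification of sources and targets, i.e.\ that the twist transports correctly under the equivalences $\phi_h : \cat{C}_g^Z \to \cat{C}_{hgh^{-1}}^Z$. Using observation \eqref{twistandactionobservation1}, the twist $\theta_X$ is the evaluation of $Z$ on the 2-cell coming from the homotopy $(z,t,s) \mapsto \gamma(z\operatorname{e}^{2\pi\im st})$ on the cylinder, where $\gamma$ represents $g$. The functor $\phi_h$ is evaluation on a cylinder carrying a cancellable homotopy representing $h$. Gluing these together, one sees that whiskering the twist 2-cell with the $\phi_h$-cylinders produces a bordism with homotopy that is homotopic relative boundary to the one defining $\theta_{h.X}$ — concretely because the rotation of the ingoing circle against the outgoing one (observation \eqref{twistandactionobservation2}) commutes with gluing in cylinders carrying cancellable homotopies, exactly as in the proofs of Propositions~\ref{satzequivariantmonoidalstructure} and \ref{satzequivbraiding}. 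Hence $Z$ assigns the same 2-morphism to both sides, which is the claimed compatibility.

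Next I would address compatibility with duality, which is the more delicate point and the one I expect to be the main obstacle. Here one must show that the twist on the dual $X^* \in \cat{C}_{g^{-1}}^Z$ is the appropriate transpose of $\theta_X$, i.e.\ the condition relating $\theta_{X^*}$, $(\theta_X)^*$ and the action of $G$ on duals from Corollary~\ref{korczduality}, \ref{korczdualityb}. The natural approach is graphical: using observation \eqref{twistandactionobservation2}, the 2-cell underlying $\theta_X$ is a full counterclockwise rotation of one boundary circle of the cylinder against the other; bending the cylinder into the evaluation/coevaluation morphisms constructed in the proof of Proposition~\ref{satzczduals} and sliding this rotation around — using that the Dehn twist on the cylinder is compatible with the cup and cap bordisms up to diffeomorphism relative boundary — yields the rotation 2-cell computing $\theta_{X^*}$, decorated with the bundle $g^{-1}$ rather than $g$, together with the correction by $\phi_{g}$ coming from the holonomy bookkeeping around the bent tube. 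This is precisely the diffeomorphism-of-bordisms manipulation carried out in the non-equivariant case in \cite{BDSPV153D}; the extra work here is tracking the bundle decorations and the cancellable homotopies through the slicing, but no new idea is needed beyond what already appears in the proofs above. Assembling the two compatibility checks, $(\cat{C}^Z,\otimes,c,\theta)$ is a $G$-equivariant ribbon category, and since all ingredient categories are complex and finitely semisimple by construction it is a complex finitely semisimple $G$-ribbon category, as claimed.
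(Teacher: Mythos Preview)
There is a genuine gap: you check only compatibility with the $G$-action and with duality, but the definition of a $G$-twist (the paper refers to \cite[Definition~4.8]{maiernikolausschweigerteq}) also requires the braiding compatibility
\[
\theta_{X\otimes Y} \;=\; c_{g.Y,X}\circ (\theta_{g.Y}\otimes \id_X)\circ c_{X,Y}\circ(\id_X\otimes\theta_Y)
\]
(up to coherence), and the paper's own proof has this as a separate item~(ii). The argument there is not subsumed by either of the two checks you give: one decomposes the full counterclockwise rotation of the middle circle in the pair of pants against the outer boundary into a rotation of the two inner circles (giving $\theta_X\otimes\theta_Y$) followed by a rotation of the middle circle against the outer one (giving a double braiding). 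Without this step the twist is not shown to be compatible with the monoidal structure, so $\cat{C}^Z$ is not yet established as a $G$-ribbon category.

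Your treatment of compatibility with duality also diverges from the paper. You propose to slide the Dehn-twist 2-cell around the bent cylinder via a diffeomorphism-of-bordisms argument, appealing to \cite{BDSPV153D}. The paper instead evaluates the relevant commutative triangle of 1-morphisms on the level of bimodules via Corollary~\ref{korbimodG}, which turns it into a concrete identity in $\Hom_{\cat{C}_1^Z}(X\otimes X^*,I)$; plugging in $\operatorname{eva}_X$ and using that $G$ acts by monoidal functors together with the already-established compatibility with the $G$-action then yields the equality $\theta_X^* = \theta_{g.X^*}$ by a short graphical-calculus computation. Your sketch may be completable, but as written it does not address how the duality morphisms from Proposition~\ref{satzczduals} (which arise from an idempotent splitting, not directly from a bordism) interact with the rotation 2-cell, whereas the paper's bimodule route sidesteps this by working entirely with hom-spaces.
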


\begin{proof}

For the proof that $\theta$ is actually a $G$-twist we need to show that it is compatible with the already existing structure:

\begin{pnum}
	
	\item Compatibility with the $G$-action: For $X\in \cat{C}_g^Z$ and $h\in G$ we need to prove that the square
	\begin{align}\begin{array}{c}
		\begin{tikzpicture}[scale=2, implies/.style={double,double equal sign distance,-implies},
		dot/.style={shape=circle,fill=black,minimum size=2pt,
			inner sep=0pt,outer sep=2pt},]
		\node (A1) at (0,1) {$h.X$};
		\node (A2) at (2,1) {$(hgh^{-1}).h.X$};
		\node (B1) at (0,0) {$h.g.X$};
		\node (B2) at (2,0) {$(hg).X$};
		\path[->,font=\scriptsize]
		(A1) edge node[above]{$\theta_{h.X}$} (A2)
			(A1) edge node[left]{$h.\theta_X$} (B1)
		(A2) edge node[right]{$\cong$} (B2)
		(B1) edge node[below]{$\cong$} (B2);
		\end{tikzpicture}\end{array}\label{eqntwistandaction}
		\end{align}
	is commutative, where $\cong$ stands for the coherence isomorphisms of $\cat{C}^Z$.
	But this follows from the commutativity of the square 
		\begin{center}
		\vspace*{0.5cm}
		\includegraphics[width=0.7\textwidth]{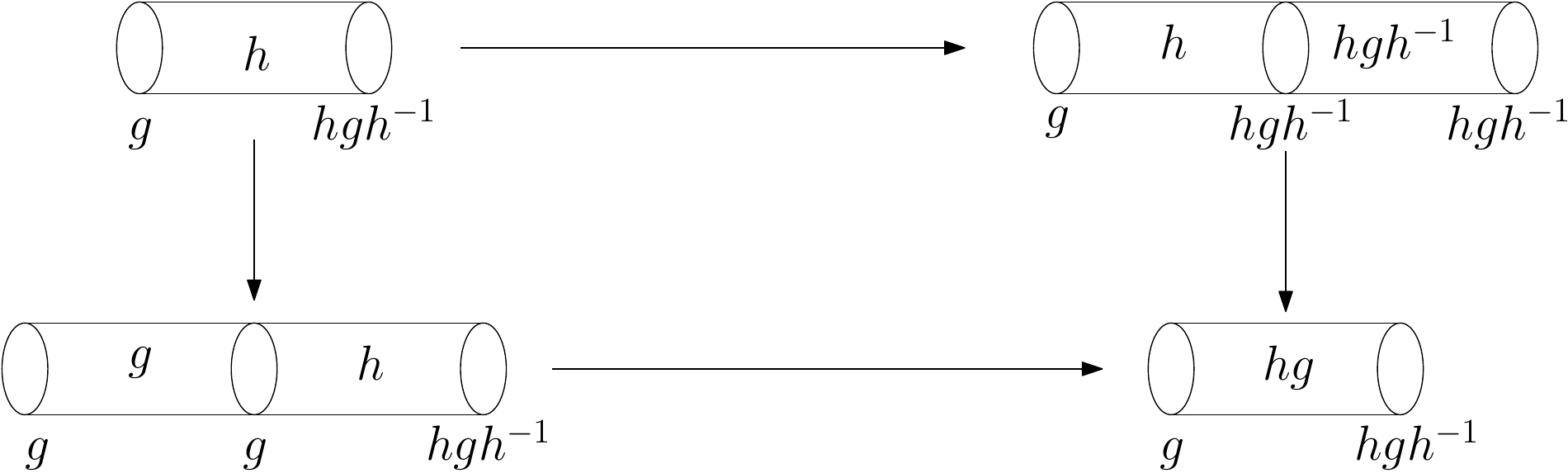}
		\vspace*{0.5cm},
	\end{center} in which the vertices are 1-cells and the edges are 2-cells in $G\text{-}\Cob(3,2,1)$, respectively. In the clockwise path, we first use the twist to add the homotopy $hgh^{-1}:hgh^{-1} \to hgh^{-1}$ and then compose and cancel homotopies. In the counterclockwise path, we use the twist to add the homotopy $g: g \to g$ and compose again. Both paths are represented by homotopies of maps on the cylinder relative boundary (see observation~\ref{twistandactionobservation1} above) and are equivalent up to higher homotopy, which proves commutativity of \eqref{eqntwistandaction}. \label{prooftwistdualityaction}
	 
	\item Compatibility with the braiding: For $X\in \cat{C}_g^Z$ and $Y\in \cat{C}_h^Z$ the isomorphism
	\begin{align} \theta_{X\otimes Y} : X \otimes Y \to (gh).(X\otimes Y) \end{align} is obtained by evaluation of $Z$ on the 2-cell
	\begin{center}
		\vspace*{0.5cm}
		\includegraphics[width=0.7\textwidth]{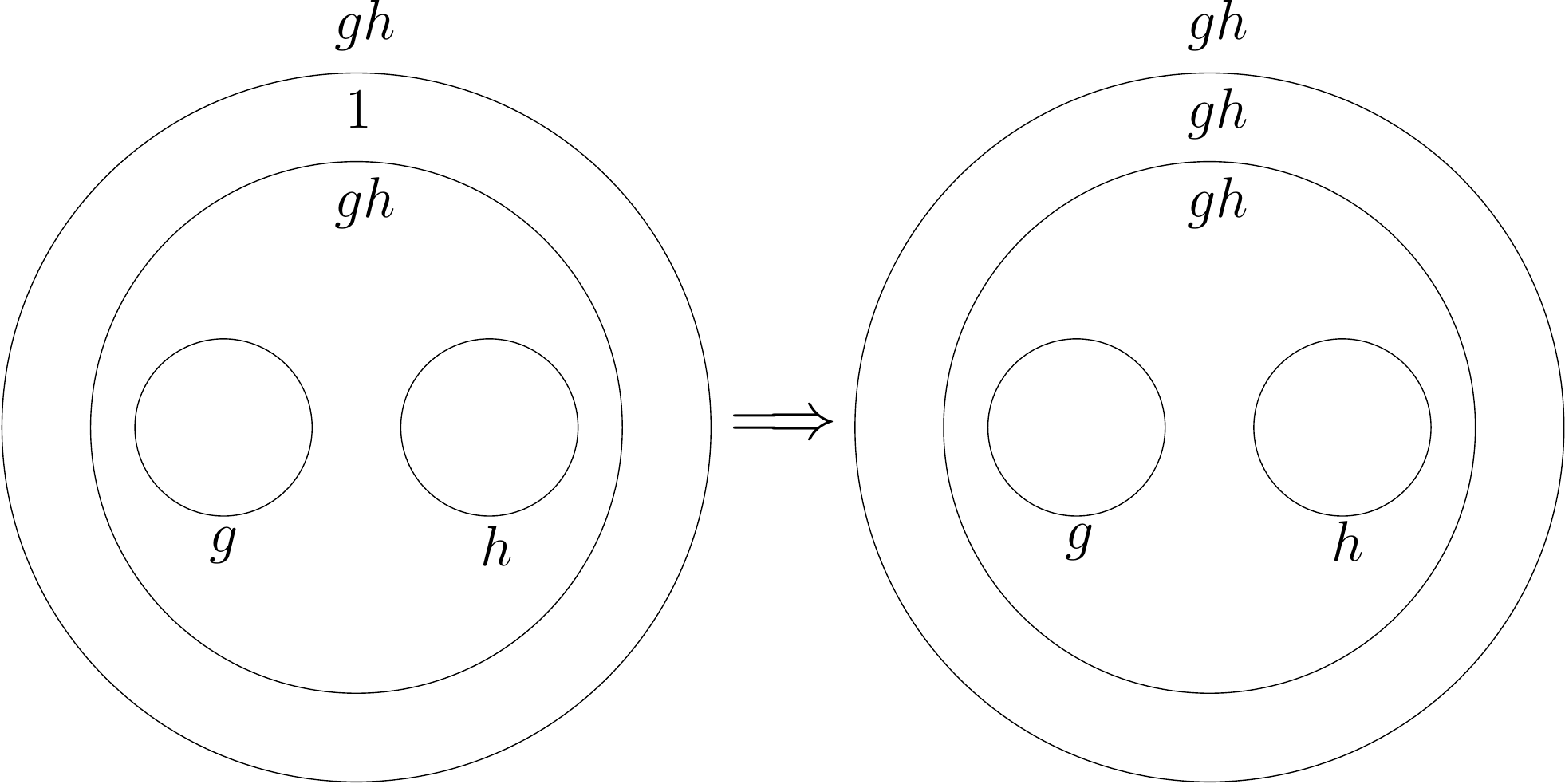}
		\vspace*{0.5cm}
	\end{center} using the twist to add the homotopy $gh: gh \to gh$ in the outer ring. By observation~\ref{twistandactionobservation2}  on page~\pageref{twistandactionobservation2} this is accomplished by a full counterclockwise rotation of the middle circle containing the two smaller circles against the outer circle. The same result is obtained by first rotating the smaller circles within the middle circle (this gives us the twists on the two tensor factors) and the rotating the middle circle against the outer one (this gives us a double braiding by the proof of Proposition~\ref{satzequivbraiding}). In the graphical calculus this means
	\begin{center}
	\vspace*{0.5cm}
	\includegraphics[width=0.25\textwidth]{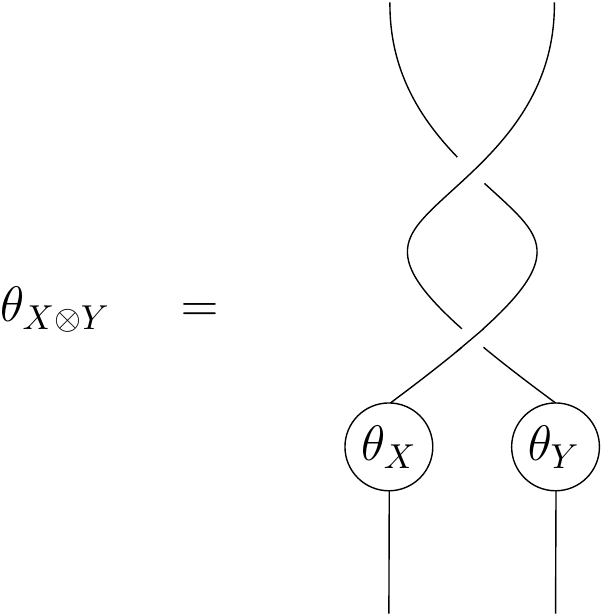}
	\vspace*{0.5cm} ,
\end{center} which is exactly the compatibility of twist and braiding. 
	
	\item Compatibility with duality: Since $X^*\in \cat{C}_{g^{-1}}^*$ for $X \in \cat{C}_g^Z$ by Proposition~\ref{satzczduals}, the twist evaluated on $g.X^*$ together with the coherence isomorphisms yields an isomorphism
	\begin{align} \theta_{g.X^*} : g.X^* \to g^{-1}.g.X^* \cong X^*. \label{eqndualitytwisteqn1}\end{align} Here we also used the coherence isomorphisms, but by abuse of notation refrain from giving a new name to the composite. To prove the compatibility of twist and duality, we need to show that this map is equal to the dual
	\begin{align} \theta_X^* : g.X^*\cong(g.X)^* \to X^*\label{eqndualitytwisteqn2} \end{align} of $\theta_X : X \to g.X$ (recall that $g.X^*\cong(g.X)^*$ by Proposition~\ref{satzczduals}). 
	To this end, we evaluate the commutative triangle 
	\begin{center}
		\vspace*{0.5cm}
		\includegraphics[width=0.65\textwidth]{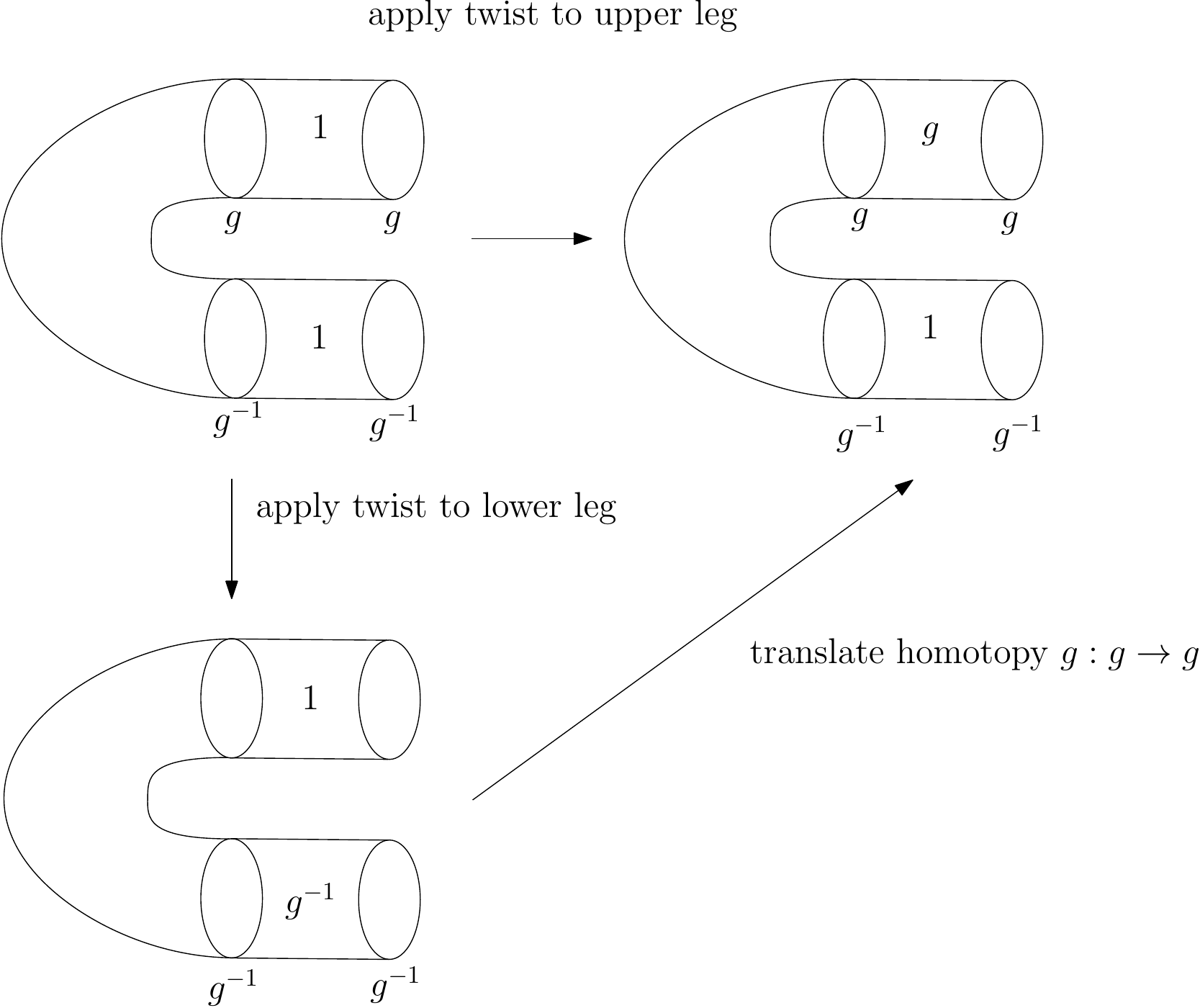}
		\vspace*{0.5cm} 
	\end{center} on the level of bimodules, see page~\pageref{bimodsppage}. By Corollary~\ref{korbimodG} we translate it to the commutative triangle 
\begin{align}\label{eqntwistdualitytriangle}\begin{array}{c}
	\begin{tikzpicture}[scale=2, implies/.style={double,double equal sign distance,-implies},
	dot/.style={shape=circle,fill=black,minimum size=2pt,
		inner sep=0pt,outer sep=2pt},]
	\node (A1) at (0,1) {$\Hom_{\cat{C}_1^Z}(X\otimes X^*,I)$};
	\node (A2) at (4,1) {$\Hom_{\cat{C}_1^Z}(g.X\otimes X^*,I)$};
	\node (B1) at (1,0.5) {};
	\node (B2) at (4,0) {$\Hom_{\cat{C}_1^Z}(X\otimes g^{-1}.X^*,I)$\ .};
	\path[->,font=\scriptsize]
	(A1) edge node[above]{$f \mapsto f \circ (\theta_X^{-1} \otimes \id_{X^*})$} (A2)
	(B2) edge node[right]{$g$} (A2)
	(A1) edge node[left]{$f \mapsto f \circ ( \id_{X} \otimes \theta_{X^*}^{-1}) \qquad $} (B2);
	\end{tikzpicture}\end{array}\end{align}
	Here by abuse of notation we denote by $g$ the map induced by the functor $g: \cat{C}_g^Z \to \cat{C}_g^Z$ on morphism spaces and coherence isomorphisms, i.e.\ the map
	\begin{align}
	\Hom_{\cat{C}_1^Z}(X\otimes g^{-1}.X^*,I) \stackrel{g}{\to} \Hom_{\cat{C}_1^Z}(g.(X\otimes g^{-1}.X^*),g.I)\cong \Hom_{\cat{C}_1^Z}(g.X\otimes X^*,I)\ .
	\end{align} Since the evaluation $\operatorname{eva}_X: X \otimes X^* \to I$ is an element of $\Hom_{\cat{C}_1^Z}(X\otimes X^*,I)$, we obtain from \eqref{eqntwistdualitytriangle}
	\begin{align}
	g. \left(   \operatorname{eva}_X \circ (\id_X \otimes \theta_{X^*}^{-1})  \right) = \operatorname{eva}_X \circ (\theta_X^{-1} \otimes \id_{X^*})\ .
	\end{align} Using that $g: \cat{C}_g^Z \to \cat{C}_g^Z$ is a monoidal functor (Proposition~\ref{satzequivariantmonoidalstructure}) and the compatibility of twist and $G$-action in \ref{prooftwistdualityaction}, this implies
	\begin{align}
	\operatorname{eva}_{g.X} \circ (\theta_X \otimes \id_{g.X^*}) = \operatorname{eva}_X \circ (  \id_X \otimes \theta_{g.X^*}). \end{align}Now a straightforward computation in the graphical calculus using the snake identities for the duality morphisms shows that \eqref{eqndualitytwisteqn1} is indeed equal to \eqref{eqndualitytwisteqn2}.     
	\end{pnum}

\end{proof}

\spaceplease
\begin{remark}\label{homrelaxbmk}
By \cite[Theorem~3.1]{turaevhqft} \emph{non-extended} two-dimensional $G$-equivariant topological field theories are classified by \emph{crossed Frobenius $G$-algebras}, see \cite[II.3.2]{turaevhqft} for a definition of the latter. The structure and properties of such crossed Frobenius $G$-algebras arise from the evaluation of a two-dimensional $G$-equivariant topological field theory on surfaces equipped with bundles, just like a $G$-ribbon category arises from the evaluation of an extended three-dimensional $G$-equivariant topological field theory on surfaces. Hence, we should be able to trace back the occurrence of certain structures and properties to common geometric origins, where of course the $G$-ribbon category lies one categorical level higher than the crossed Frobenius $G$-algebras. For the former equalities, hold up to coherent isomorphism; they are homotopically relaxed. Such a comparison is given in the following table:
\spaceplease
\begin{longtable}[c]{|p{0.2\textwidth}|p{0.3\textwidth}|p{0.3\textwidth}|}
\hline \bfseries\textbf{Geometric origin}  & \textbf{$G$-crossed Frobenius algebra $\mathfrak{A}=\bigoplus_{g\in G}\mathfrak{A}_g$ } & \textbf{$G$-equivariant ribbon category $\cat{C}=\bigoplus_{g\in G} \cat{C}_g$}\normalfont \\ \hline 
homotopies on the cylinder & $G$-action, shifting sectors by conjugation & $G$-action up to coherent isomorphism, shifting sectors by conjugation \\ \hline
pair of pants decorated with bundles & associative and unital product taking $\mathfrak{A}_g \otimes \mathfrak{A}_h$ to $\mathfrak{A}_{gh}$, $G$-action by algebra automorphisms & monoidal product taking $\cat{C}_g \boxtimes \cat{C}_h$ to $\cat{C}_{gh}$, $G$-action by monoidal functors \\ \hline
rotation of the pair of pants & crossed commutativity: $xy=(g.y)x$ for $x \in \mathfrak{A}_g$, $y\in  \mathfrak{A}_h$ & $G$-braiding $X \otimes Y \cong g.Y \otimes X$ for $X \in \cat{C}_g$ and $Y\in \cat{C}_h$\\ \hline
Dehn twist & self-invariance of twisted sectors: $g.x = x$ for $x\in \mathfrak{A}_g$ & $G$-twist $X \cong g.X$ for $X\in \cat{C}_g$\\ \hline
\end{longtable}
\end{remark}

\subsection{Geometric versus algebraic orbifoldization}
Given an extended $G$-equivariant topological field theory $Z: G\text{-}\Cob(3,2,1) \to \TwoVect$ we can evaluate the orbifold theory $Z/G : \Cob(3,2,1) \to \TwoVect$ from Definition~\ref{defofkext} on the circle and obtain a 2-vector space $Z/G(\sphere^1)$. By \cite{BDSPV153D} the topological field theory $Z/G$ can be used to endow $Z/G(\sphere^1)$ with the structure of a complex finitely semisimple ribbon category.  
Using the explicit description of the orbifold theory $Z/G$ in Proposition~\ref{satzorbifoldconcrete} we will now characterize $Z/G(\sphere^1)$ in terms of $\cat{C}^Z = Z(\sphere^1,?)$. This will allow us in Theorem~\ref{thmorbifoldtheorymodular} to relate the geometric orbifold construction of this article to the concept of an orbifold category appearing e.g.\ in \cite{kirrilovg04} or \cite{centerofgradedfusioncategories}.

The following observation can be verified by a direct computation:

\begin{lemma}[]\label{lemmamultplicationfunctorhtpfiber}
For the multiplication functor $M: (G\times G)//G \to G//G$ the homotopy fiber $M^{-1}[g]$ over any $g\in G$ is equivalent to the discrete groupoid with object set
$\{(a,b) \in G\times G \,|\, ab = g\}$. 
\end{lemma}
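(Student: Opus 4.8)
The plan is to compute the homotopy fiber $M^{-1}[g]$ directly from the standard comma-category model and then simplify. Recall that for a functor $F\colon \mathcal{C}\to\mathcal{D}$ between groupoids and an object $d\in\mathcal{D}$, the homotopy fiber $F^{-1}[d]$ has as objects the pairs $(c,\phi)$ with $c\in\Ob\mathcal{C}$ and $\phi\colon F(c)\to d$ an isomorphism in $\mathcal{D}$, and as morphisms $(c,\phi)\to(c',\phi')$ the morphisms $\psi\colon c\to c'$ of $\mathcal{C}$ with $\phi'\circ F(\psi)=\phi$. First I would unravel what this says for $M\colon (G\times G)//G\to G//G$: with the usual conventions for action groupoids and for composition in $G//G$, an object of $M^{-1}[g]$ is a triple $(a,b,\phi)$ with $a,b,\phi\in G$ and $\phi(ab)\phi^{-1}=g$, and a morphism $(a,b,\phi)\to(a',b',\phi')$ is an element $k\in G$ with $kak^{-1}=a'$, $kbk^{-1}=b'$ and $\phi' k=\phi$.

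The key step is to observe that every object is isomorphic to one whose gluing datum $\phi$ is trivial. Concretely, for an object $(a,b,\phi)$ the element $k:=\phi$ defines an isomorphism $(a,b,\phi)\to(\phi a\phi^{-1},\phi b\phi^{-1},e)$, where $e\in G$ is the unit; the target is a legitimate object since $(\phi a\phi^{-1})(\phi b\phi^{-1})=\phi(ab)\phi^{-1}=g$. Hence the inclusion of the full subcategory $\mathcal{F}_g$ of $M^{-1}[g]$ on the objects $\{(a,b,e)\mid ab=g\}$ is essentially surjective, and being full and faithful it is an equivalence $\mathcal{F}_g\simeq M^{-1}[g]$. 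Finally $\mathcal{F}_g$ is discrete: a morphism $(a,b,e)\to(a',b',e)$ is a $k\in G$ with $e k=e$, hence $k=e$, so there are no morphisms between distinct objects and no nontrivial automorphisms. Thus $\mathcal{F}_g$ is the discrete groupoid on the set $\{(a,b)\in G\times G\mid ab=g\}$, and composing the two identifications yields the claim.

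I would note that this is a special case of the general fact that for a map $f\colon X\to Y$ of $G$-sets and any $y\in Y$ the homotopy fiber of the induced functor $X//G\to Y//G$ over the object $y$ is the discrete groupoid on the set-theoretic fiber $f^{-1}(y)$; here one takes $X=G\times G$, $Y=G$ with the conjugation actions and $f$ the group multiplication. There is no genuine obstacle; the only point requiring attention is the bookkeeping of the conjugation conventions and of the order of composition in $G//G$ fixed earlier, so that the relations $\phi(ab)\phi^{-1}=g$ and $\phi' k=\phi$ come out with the inverses in the right places. Once these are pinned down, the verification is immediate, which is why the statement can be phrased as being checked by a direct computation.
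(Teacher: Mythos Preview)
Your proof is correct and is precisely the direct computation the paper alludes to (the paper gives no further argument beyond stating that the lemma is verified by a direct computation). Your unwinding of the comma-category model of the homotopy fiber and the normalization of the gluing datum to the identity is the expected verification; the added remark about general $G$-sets is a nice bonus.
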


 Recall from Proposition~\ref{satzorbifoldconcrete} that the orbifold theory $Z/G$ assigns to the circle the 2-vector space of parallel sections of $\cat{C}^Z$. The data of a parallel section of $\cat{C}^Z$ is an object $s(g) \in Z(\sphere^1,g)$ for each $g\in G$ together with coherent isomorphisms $h.s(g)\cong s(hgh^{-1})$ for each $h\in G$. These isomorphisms describe the parallelity up to isomorphism. 

\begin{proposition}[]\label{satzdescriptorbtensorstructure}
Let $G$ be a finite group and $Z : G\text{-}\Cob(3,2,1) \to \TwoVect$ an extended $G$-equivariant topological field theory. The value $Z/G (\sphere^1)$ of the orbifold theory $Z/G$ on the circle naturally carries in the sense of \cite{BDSPV153D} the structure of a complex finitely semisimple ribbon category.
This structure arises in the following way from the structure of $\cat{C}^Z$:
\begin{myenumerate}

\item For $s,s' \in Z/G(\sphere^1)$, up to natural isomorphism, the monoidal product is given by
\begin{align}
(s\otimes s')(g) = \coprod_{ab=g} s(a) \otimes s'(b) \myforall g\in G\ .\end{align} The unit of this monoidal product is the unit of $\cat{C}^Z$ seen as a parallel section in the obvious way. If $\cat{C}^Z$ has a simple unit, then so has $Z/G(\sphere^1)$. \label{satzdescriptorbtensorstructurea}

\item For $s,s' \in Z/G(\sphere^1)$ the braiding isomorphism $s \otimes s' \cong s' \otimes s$ is given by the isomorphisms
\begin{align}
(s \otimes s')(g ) = \coprod_{ab=g} s(a) \otimes s'(b) \to \coprod_{uv=g} s'(u) \otimes s(v) = (s' \otimes s)(g ) \myforall g\in G
\end{align} which map the summand $(a,b)$ to the summand $(aba^{-1},a)$ by
\begin{align}
s(a) \otimes s'(b) \xrightarrow{c_{  s(a) , s'(b)   }} a.s'(b) \otimes s(a) \xrightarrow{\text{parallelity}} s'(aba^{-1}) \otimes s(a).
\end{align}\label{satzdescriptorbtensorstructureb}

\item For $s\in Z/G(\sphere^1)$ the twist is given by
\begin{align}
s(g) \xrightarrow{\theta_{s(g)}} g.s(g) \xrightarrow{\text{parallelity}} s(ggg^{-1}) = s(g) \myforall g\in G\ .
\end{align}

\end{myenumerate}
 
\end{proposition}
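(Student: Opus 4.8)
The plan is to read off each piece of structure from the explicit description of the orbifold theory in Proposition~\ref{satzorbifoldconcrete}, exploiting that $Z/G$ is an honest extended topological field theory with values in $\TwoVect$, so that the \emph{existence} of the ribbon structure on $Z/G(\sphere^1)$ and all its coherence axioms are automatic by \cite{BDSPV153D}. The actual content is therefore only to identify the structure morphisms with the formulae claimed in \ref{satzdescriptorbtensorstructurea}--(c), and for this the key mechanism is that all homotopy fibres appearing in Proposition~\ref{satzorbifoldconcrete} will be discrete or contractible, so that the pull-push limits collapse to finite biproducts.

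First I would treat the monoidal product: it is the value of $Z/G$ on the pair of pants $P\colon \sphere^1\sqcup\sphere^1 \to \sphere^1$, composed with the monoidal constraint $Z/G(\sphere^1\sqcup\sphere^1)\cong Z/G(\sphere^1)\boxtimes Z/G(\sphere^1)$. By Proposition~\ref{satzorbifoldconcrete}\ref{satzorbifoldconcreteb} the value on $s\boxtimes s'$ at a bundle $g$ on the outgoing circle is a limit over the homotopy fibre $r_1^{-1}[g]$ of the restriction functor $\Pi(P,BG)\to\Pi(\sphere^1,BG)$, which under the identifications of \eqref{eqnpairofpantsspan} is the multiplication functor $M\colon (G\times G)//G\to G//G$. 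By Lemma~\ref{lemmamultplicationfunctorhtpfiber} the fibre $M^{-1}[g]$ is equivalent to the discrete groupoid on $\{(a,b)\mid ab=g\}$, and a limit over a discrete index category in a $2$-vector space is a finite biproduct; choosing the obvious representatives one has $Z(P\text{ decorated by }a,b)=\otimes_{a,b}$ and the auxiliary cylinder map is the identity, so the limit collapses to $\coprod_{ab=g} s(a)\otimes s'(b)$, which is \ref{satzdescriptorbtensorstructurea}. For the unit I would run the same argument for the disc $D\colon\emptyset\to\sphere^1$: by Lemma~\ref{lemmamappingspacebundles} $\Pi(D,BG)\simeq\star//G$ and the restriction to $\sphere^1$ picks out the trivial bundle, so the homotopy fibre over $g$ is empty unless $g=1$ and is contractible for $g=1$; hence $Z/G(D)\mathbb{C}$ is the parallel section supported in sector $1$ with value $\eta(\mathbb{C})=I$, i.e.\ the unit of $\cat{C}^Z$ viewed as a parallel section, and $(s_I\otimes s')(g)\cong s'(g)$ as required. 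Finally $\End_{Z/G(\sphere^1)}(s_I)$ is the space of $G$-invariant endomorphisms of $I$ in $\cat{C}_1^Z$, which is $\mathbb{C}$ whenever $I$ is simple, giving the last assertion of \ref{satzdescriptorbtensorstructurea}.

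For the braiding and twist I would argue identically, now invoking part \ref{satzorbifoldconcretec} of Proposition~\ref{satzorbifoldconcrete}: since the relevant homotopy fibres are discrete (resp.\ contractible), the ``pull'' and ``push'' maps are just re-indexings of the biproduct summands, and the middle arrow $Z(M,?)$ is the evaluation of $Z$ on the corresponding decorated $2$-morphism of $G\text{-}\Cob(3,2,1)$. For the braiding that $2$-morphism is the rotation of the two ingoing circles of the pair of pants around each other, which by the proof of Proposition~\ref{satzequivbraiding} yields exactly $c_{s(a),s'(b)}\colon s(a)\otimes s'(b)\to a.s'(b)\otimes s(a)$; composing with the parallelity isomorphism $a.s'(b)\cong s'(aba^{-1})$ and noting that $aba^{-1}\cdot a=ab=g$ identifies the image with the summand $(aba^{-1},a)$ of $(s'\otimes s)(g)=\coprod_{uv=g}s'(u)\otimes s(v)$, which is \ref{satzdescriptorbtensorstructureb}. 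For the twist the relevant $2$-morphism is the Dehn twist of the cylinder, and by the construction of $\theta$ in Section~\ref{sectwist} (observation~\ref{twistandactionobservation1}) its evaluation on $s(g)$ is $\theta_{s(g)}\colon s(g)\to g.s(g)$; composing with the parallelity isomorphism $g.s(g)\cong s(ggg^{-1})=s(g)$ gives (c).

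The main obstacle I expect is purely bookkeeping: carefully matching the auxiliary cylinder maps $Z(S_1\times[0,1],h)$ in Proposition~\ref{satzorbifoldconcrete} with the equivalences $\phi_h$ implementing parallelity, and verifying that for the chosen representatives of the equivalence classes of objects of the homotopy fibres these maps become identities, so that the limits genuinely collapse to the stated finite direct sums without spurious conjugation factors (this is exactly where Lemma~\ref{lemmamultplicationfunctorhtpfiber} does the work). Once this is in place, naturality of all the isomorphisms in $s$ and $s'$ and the hexagon and ribbon identities are inherited from \cite{BDSPV153D} applied to $Z/G$ and need no separate verification.
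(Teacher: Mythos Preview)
Your proposal is correct and follows essentially the same route as the paper: you use Proposition~\ref{satzorbifoldconcrete} together with Lemma~\ref{lemmamultplicationfunctorhtpfiber} to collapse the homotopy fibre limits to finite biproducts, then read off the braiding and twist from the rotation and Dehn twist $2$-cells, invoking parallelity exactly as the paper does. The only cosmetic difference is that in part~\ref{satzdescriptorbtensorstructurea} the paper first writes the full limit $\lim_{(a,b,h)\in M^{-1}[g]} h.(s(a)\otimes s'(b))$, then uses that $G$ acts by monoidal functors and that $s,s'$ are parallel to rewrite the integrand before applying Lemma~\ref{lemmamultplicationfunctorhtpfiber}, whereas you go directly to representatives with trivial $h$; these are equivalent reductions.
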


\begin{proof}
$\phantom{X}$
\begin{myenumerate}

\item The monoidal product is obtained from the pair of pants. Hence, using the span \eqref{eqnpairofpantsspan} and the concrete description of the orbifold construction in Proposition~\ref{satzorbifoldconcrete}, \ref{satzorbifoldconcreteb} we find
\begin{align} (s\otimes s')(g) = \lim_{(a,b,h) \in M^{-1}[g]} h. (s(a) \otimes s(b))\ .\end{align}
Since $G$ acts by monoidal functors (Proposition~\ref{satzequivariantmonoidalstructure}) and $s$ and $s'$ are parallel, this reduces to
\begin{align} (s\otimes s')(g) \cong  \lim_{(a,b,h) \in M^{-1}[g]} s(hah^{-1}) \otimes s(hbh^{-1})\ .\end{align}
Now Lemma~\ref{lemmamultplicationfunctorhtpfiber} yields the assertion if we take into account that finite coproducts and finite products in a 2-vector space coincide. 
The monoidal unit can also be obtained by Proposition~\ref{satzorbifoldconcrete}, \ref{satzorbifoldconcreteb}. Alternatively, we can just use that the given object is a unit for the monoidal product and hence the unique one up to isomorphism. 

We need to prove the additional statement on the simplicity of units: The unit of $Z/G(\sphere^1)$ is $I$ with the canonical isomorphisms $\phi_g : g.I\cong I$ coming from the fact that $G$ acts by monoidal functors. Hence, an endomorphism of the unit of $Z/G(\sphere^1)$ is a morphism $\psi: I \to I$ such that $\phi_g \circ (g.\psi) = \psi \circ \phi_g$ for all $g\in G$. If $I$ is simple, then $\psi = \lambda \id_I$ for some $\lambda \in \mathbb{C}$ and the requirement $\phi_g \circ (g.\psi) = \psi \circ \phi_g$ is automatically true since $g$ acts as a $\mathbb{C}$-linear functor. This proves that an endomorphism of the unit of $\cat{C}^Z$ is the same as an endomorphism of the unit of $Z/G(\sphere^1)$. Therefore, the unit of $Z/G(\sphere^1)$ is simple as well.

\item The evaluation of the stack $\Pi(?,BG)$ on the 2-cell in $\Cob(3,2,1)$ that we used to produce the braiding yields the span of spans
	\begin{center}
\begin{tikzpicture}[scale=2,     implies/.style={double,double equal sign distance,-implies},
dot/.style={shape=circle,fill=black,minimum size=2pt,
	inner sep=0pt,outer sep=2pt},]
\node (A1) at (0,0) {$G//G \times G//G$};
\node (A2) at (2,1) {$(G\times G)//G$};
\node (A3) at (4,0) {$G//G$};
\node (B2) at (2,-1) {$(G\times G)//G$};
\node (C) at (2,0) {$(G\times G)//G$};
\node (B1) at (1,-0.5) {$$};
\node (B3) at (2,-1) {$$};
\path[->,font=\scriptsize]
(A2) edge node[above]{$B$} (A1)
(A2) edge node[above]{$M$} (A3)
(B2) edge node[below]{$B$} (A1)
(B2) edge node[below]{$M$} (A3)
(C) edge node[right]{$R$} (B2)
(C) edge node[right]{$=$} (A2);
\draw (C) edge[implies] node[above] {\scriptsize$\alpha$} (A1);
\end{tikzpicture},
\end{center} where $R : (G \times G)//G \to (G \times G)//G $ is the functor $(g,h) \mapsto (ghg^{-1},g)$ and $\alpha$ is the obvious natural transformation. By Proposition~\ref{satzorbifoldconcrete}, \ref{satzorbifoldconcretec} the braiding isomorphism $(s \otimes s')(g) \cong (s'\otimes s)(g)$ is given as follows: We start with 

\begin{align}
(s \otimes s')(g ) = \lim_{(a,b,h) \in M^{-1}[g]} s(hah^{-1}) \otimes s'(hbh^{-1})\ , \end{align} apply vertex-wise the equivariant braiding, i.e.\ the isomorphisms \begin{align} s(hah^{-1}) \otimes s'(hbh^{-1}) \cong (hah^{-1}).s'(hbh^{-1}) \otimes s(hah^{-1})\ ,  
\end{align} use parallelity
\begin{align}  (hah^{-1}).s'(hbh^{-1}) \otimes s(hah^{-1})\cong s'(haba^{-1}h^{-1}) \otimes s(hah^{-1})   
\end{align} and push the resulting limit
\begin{align}
\lim_{(a,b,h) \in M^{-1}[g]} (hah^{-1}).s'(hbh^{-1}) \otimes s(hah^{-1})\cong s'(haba^{-1}h^{-1}) \otimes s(hah^{-1})  
\end{align} along the equivalence $M^{-1}[g] \cong M^{-1}[g]$ induced by $R$. Using the identifications made in \ref{satzdescriptorbtensorstructurea} based on Lemma~\ref{lemmamultplicationfunctorhtpfiber} the assertion follows.

\item The proof of this assertion follows also from Proposition~\ref{satzorbifoldconcrete}.

\end{myenumerate}

\end{proof}

 In order to compare Proposition~\ref{satzdescriptorbtensorstructure} to the concept of an orbifold category, let us recall the latter from \cite[Lemma~2.3 and Theorem~3.9]{kirrilovg04}:

\spaceplease 
\begin{proposition}[Algebraic orbifoldization of an equivariant ribbon category from \cite{kirrilovg04}]\label{satzalgofk}
	Let $G$ be a finite group and $\cat{C}$ a complex finitely semisimple $G$-ribbon category, then the orbifold category $\cat{C} / G$ (the category of homotopy fixed points), i.e.\ the category of objects $X$ in $\cat{C}$ together with a family of coherent isomorphisms $(\chi_g : g.X \to X)_{g\in G}$ inherits the following structure from $\cat{C}$:  
	\begin{myenumerate}
	\item By
	\begin{align} (X,(\chi_g)_{g\in G}) \otimes (Y,(\lambda_g)_{g\in G}) := (X \otimes Y,(\chi_g \otimes \lambda_g)_{g\in G}) \end{align} for all   $(X,(\chi_g)_{g\in G}) , (Y,(\lambda_g)_{g\in G}) \in \cat{C} / G$  it is made into a monoidal category with the monoidal unit in $\cat{C}$ (seen as a homotopy fixed point) as the monoidal unit. The monoidal category $\cat{C} / G$ has duals.
	\item The monoidal category $\cat{C} /G$ is braided and the underlying isomorphism $X \otimes Y \to Y \otimes X$ for objects \begin{align} \left(X=\bigoplus_{g\in G} X_g ,(\chi_g)_{g\in G}\right) , \left(Y=\bigoplus_{g \in G} Y_g ,(\lambda_g)_{g\in G}\right) \in \cat{C} / G\end{align} is given by
	\begin{align}
	X_g \otimes Y_h \xrightarrow{c_{X_g,Y_h}} g.Y_h \otimes X_g \xrightarrow{\lambda_g \otimes \id_{X_g}} Y_h \otimes X_g \myforall g,h \in G.
	\end{align} 
	
	\item The braided monoidal category $\cat{C} / G$ comes with a twist which on the object \begin{align}\left(X=\bigoplus_{g\in G} X_g ,(\chi_g)_{g\in G}\right)\end{align} arises from the equivariant twist by
	\begin{align} X_g \xrightarrow{\theta_{X_g}} g.X_g \xrightarrow{\chi_g} X_g.\end{align} 
	
	\end{myenumerate}
	\end{proposition}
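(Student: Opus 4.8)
The plan is to check directly that every piece of the $G$-ribbon structure on $\cat{C}$ descends to the category of homotopy fixed points; since the statement is essentially \cite[Lemma~2.3 and Theorem~3.9]{kirrilovg04}, I would only spell out the steps and defer the detailed diagram chases to that reference. Recall that an object of $\cat{C}/G$ is a pair $(X,(\chi_g)_{g\in G})$ with $\chi_g:g.X\to X$ isomorphisms such that $\chi_1$ is the unit coherence $\phi_1\cong\id$ of the $G$-action and $\chi_{gh}=\chi_g\circ(g.\chi_h)$ up to the associativity coherence $\alpha_{g,h}:\phi_g\circ\phi_h\cong\phi_{gh}$, and that a morphism $(X,(\chi_g))\to(Y,(\lambda_g))$ is a morphism $f:X\to Y$ in $\cat{C}$ with $\lambda_g\circ(g.f)=f\circ\chi_g$ for all $g$. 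The whole proof is then a systematic translation of the equivariant axioms.

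\textbf{Monoidal structure and duals.} Given homotopy fixed points $(X,(\chi_g))$ and $(Y,(\lambda_g))$, I would first use the monoidal structure $\kappa_g$ on each $\phi_g$ (Proposition~\ref{satzequivariantmonoidalstructure}) to identify $g.(X\otimes Y)$ with $g.X\otimes g.Y$, and then equip $X\otimes Y$ with the fixed-point datum $(\chi_g\otimes\lambda_g)\circ\kappa_g^{-1}$, abbreviated $\chi_g\otimes\lambda_g$ in the statement; the cocycle and unit conditions for this family follow from the coherence relating $\kappa_g$, $\kappa_h$ and $\alpha_{g,h}$ that is part of the data of a $G$-equivariant monoidal category. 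Since the $\phi_g$ are monoidal functors, the associator and unitors of $\cat{C}$ are morphisms of homotopy fixed points, so $\cat{C}/G$ becomes monoidal with the unit of $\cat{C}$ (carrying its canonical fixed-point structure $\phi_g:g.I\cong I$) as unit. For duals, given $(X,(\chi_g))$ with $X\in\cat{C}_a$ I would pick a dual $X^*\in\cat{C}_{a^{-1}}$ (Corollary~\ref{korczduality}, \ref{korczdualitya}); dualising $\chi_g$ and invoking the canonical isomorphism $(g.X)^*\cong g.X^*$ from Corollary~\ref{korczduality}, \ref{korczdualityb} produces isomorphisms $g.X^*\to X^*$, and the snake identities show these make $X^*$ into a fixed point dual to $(X,(\chi_g))$.

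\textbf{Braiding and twist.} On objects written as $X=\bigoplus_g X_g$, $Y=\bigoplus_h Y_h$ I would define the braiding componentwise by $X_g\otimes Y_h\xrightarrow{c_{X_g,Y_h}}g.Y_h\otimes X_g\xrightarrow{\lambda_g\otimes\id_{X_g}}Y_h\otimes X_g$, and verify that it is natural and a morphism of homotopy fixed points (using naturality of the $G$-braiding and its compatibility with the $G$-action, \cite[Definition~4.5]{maiernikolausschweigerteq}) and that it satisfies the two hexagon identities, which are precisely the $G$-crossed hexagon axioms of $\cat{C}$ after inserting the fixed-point isomorphisms $\chi_g$, $\lambda_g$; the inverse braiding is built identically from $c^{-1}$. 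For the twist I would take on $X=\bigoplus_g X_g$ the endomorphism with $g$-component $X_g\xrightarrow{\theta_{X_g}}g.X_g\xrightarrow{\chi_g}X_g$, check that it is a morphism of homotopy fixed points using the compatibility of the $G$-twist with the $G$-action (item~\ref{prooftwistdualityaction} in the proof of Proposition~\ref{satzgribbonkat}), and deduce its compatibility with braiding and duality from the corresponding $G$-twist axioms together with the descriptions of $\otimes$ and the braiding on $\cat{C}/G$ just obtained.

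\textbf{Main obstacle.} No individual step is deep; the real difficulty is coherence bookkeeping — tracking how the monoidal structures $\kappa_g$ of the $\phi_g$ and the associativity data $\alpha_{g,h}$ of the $G$-action interact, so that the constructed families $(\chi_g\otimes\lambda_g)$ and the componentwise braiding and twist really satisfy the (co)cycle and hexagon conditions. The most delicate instance is the verification of the hexagon axioms for the braiding on $\cat{C}/G$ from the $G$-crossed hexagon, where one must commute the fixed-point isomorphisms past several occurrences of the $G$-action; this is exactly what is carried out in \cite{kirrilovg04}, to which I would refer for the full diagram chases.
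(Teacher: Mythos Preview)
Your proposal is correct and in fact more detailed than what the paper does: this proposition is not proved in the paper at all but merely recalled from \cite[Lemma~2.3 and Theorem~3.9]{kirrilovg04}, exactly as you indicate in your opening sentence. One minor cosmetic point: your internal references to Proposition~\ref{satzequivariantmonoidalstructure}, Corollary~\ref{korczduality} and Proposition~\ref{satzgribbonkat} concern the specific category $\cat{C}^Z$ extracted from a field theory, whereas Proposition~\ref{satzalgofk} is stated for an arbitrary complex finitely semisimple $G$-ribbon category; the structures you invoke (the monoidal functors $\phi_g$ with their structure isomorphisms $\kappa_g$, duals satisfying $(g.X)^*\cong g.X^*$, the compatibility of the $G$-twist with the $G$-action) are of course part of the general definition, so the argument goes through unchanged once you replace those citations by the corresponding axioms.
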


 We can now state our comparison result: 

\begin{theorem}[]\label{thmorbifoldtheorymodular} For any extended $G$-equivariant topological field theory $Z : G\text{-}\Cob(3,2,1) \to \TwoVect$ the evaluation of the orbifold theory $Z/G: \Cob(3,2,1) \to \TwoVect$ on $\sphere^1$ yields an equivalence
	\begin{align} \frac{Z}{G}(\sphere^1) \cong \frac{\cat{C}^Z}{G}\label{eqmodcatcompare}\end{align} as 2-vector spaces.
	Both categories carry the structure of a complex finitely semisimple ribbon category:
	\begin{itemize}
		\item $Z/G(\sphere^1)$ by being the value of an extended topological field theory on the circle in the sense of Proposition~\ref{satzdescriptorbtensorstructure}.
		
		\item $\cat{C}^Z /G$ by Proposition~\ref{satzalgofk}.
		
	\end{itemize}
	Both structures agree, i.e.\ \eqref{eqmodcatcompare} is true on the level of complex finitely semisimple ribbon categories. 
	
\end{theorem}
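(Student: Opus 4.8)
The plan is to reduce the theorem to a term-by-term comparison of the two explicit descriptions already established: Proposition~\ref{satzorbifoldconcrete} together with Proposition~\ref{satzdescriptorbtensorstructure} on the geometric side, and Proposition~\ref{satzalgofk} on the algebraic side. The first task is the equivalence of underlying 2-vector spaces. By Proposition~\ref{satzorbifoldconcrete}\,\ref{satzorbifoldconcretea} the space $Z/G(\sphere^1) = \Par\widehat{Z}(\sphere^1)$ consists of the parallel sections of the 2-vector bundle $\cat{C}^Z = Z(\sphere^1,?)$ over $\Pi(\sphere^1,BG) \cong G//G$. Unwinding the definition of parallel sections from \cite[Definition~2.10]{swpar}, such a section is exactly an object $s(g) \in \cat{C}_g^Z$ for each $g \in G$ together with a coherent family of isomorphisms $\phi_h\,s(g) \cong s(hgh^{-1})$ compatible with the coherence data $\alpha_{g,h}$ of the 2-vector bundle $\cat{C}^Z$. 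Since $\phi_h$ sends $\cat{C}_g^Z$ to $\cat{C}_{hgh^{-1}}^Z$, assembling $X := \bigoplus_{g} s(g)$ with the induced isomorphisms $\chi_h : h.X \to X$ produces precisely a homotopy fixed point of $\cat{C}^Z$ in the sense of \cite{kirrilovg04}, and this assignment is invertible and identifies morphisms on the nose. This yields the equivalence $Z/G(\sphere^1) \cong \cat{C}^Z/G$ of \eqref{eqmodcatcompare}, with naturality in $Z$ coming from naturality of $\Par$ and of the loop-groupoid identification.

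Next I would check that, under this dictionary, the two ribbon structures coincide. The monoidal product: by Proposition~\ref{satzalgofk}(1) the tensor product of homotopy fixed points is the tensor product in $\cat{C}^Z$ with the diagonal $G$-action, and since the monoidal product of $\cat{C}^Z$ carries $\cat{C}_a^Z \boxtimes \cat{C}_b^Z$ into $\cat{C}_{ab}^Z$ (Proposition~\ref{satzequivariantmonoidalstructure}), the $g$-graded piece of $X \otimes Y$ is $\bigoplus_{ab=g} X_a \otimes Y_b$, which under $X_g = s(g)$, $Y_g = s'(g)$ is exactly the formula $(s\otimes s')(g) = \coprod_{ab=g} s(a) \otimes s'(b)$ of Proposition~\ref{satzdescriptorbtensorstructure}\,\ref{satzdescriptorbtensorstructurea}; the units agree (the monoidal unit $I$ of $\cat{C}^Z$ viewed as a fixed point), and the duals agree because in both accounts duality is inherited from $\cat{C}^Z$ while the $G$-action is monoidal (Corollary~\ref{korczduality}). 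The braiding: recognising the map $\lambda_g : g.s'(h) \to s'(ghg^{-1})$ of Proposition~\ref{satzalgofk}(2) as the parallelity isomorphism, the composite $X_g \otimes Y_h \xrightarrow{c} g.Y_h \otimes X_g \xrightarrow{\lambda_g \otimes \id} Y_h \otimes X_g$ becomes the map of Proposition~\ref{satzdescriptorbtensorstructure}\,\ref{satzdescriptorbtensorstructureb}, the relabelling of summands $(a,b)\mapsto(aba^{-1},a)$ being literally the same in both. The twist: the composite $X_g \xrightarrow{\theta_{X_g}} g.X_g \xrightarrow{\chi_g} X_g$ of Proposition~\ref{satzalgofk}(3) becomes, under $\chi_g : g.s(g) \to s(g)$ equal to parallelity, the map $s(g) \xrightarrow{\theta_{s(g)}} g.s(g) \xrightarrow{\text{parallelity}} s(g)$ of Proposition~\ref{satzdescriptorbtensorstructure}(c).

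The step I expect to require genuine care — and where the main work lies — is making the identification of the first paragraph fully rigorous: showing that parallel sections of the 2-vector bundle $\cat{C}^Z$ correspond to homotopy fixed points of the $G$-equivariant category $\cat{C}^Z$ not merely as a bijection on objects but compatibly with all coherence data (the cocycle/hexagon conditions relating the section isomorphisms to $\alpha_{g,h}$ on one side and the coherence constraints on $(\chi_g)$ on the other), and that this is canonical up to coherent natural isomorphism, independent of the chosen equivalence $\Pi(\sphere^1,BG)\cong G//G$. Once this dictionary is in place, the associativity and unit constraints of both structures are visibly inherited from $\cat{C}^Z$ in the same manner, so that the object-level matchings above upgrade — via the coherence theorem for braided monoidal categories — to an equivalence of complex finitely semisimple ribbon categories, which is the assertion of the theorem.
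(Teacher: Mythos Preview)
Your proposal is correct and follows essentially the same approach as the paper: both reduce the theorem to the observation that $Z/G(\sphere^1)$ and $\cat{C}^Z/G$ are by definition the same 2-vector space (parallel sections of $\cat{C}^Z$ over $G//G$ are precisely homotopy fixed points), followed by a direct comparison of the explicit formulae in Proposition~\ref{satzdescriptorbtensorstructure} and Proposition~\ref{satzalgofk}. The paper's proof is considerably terser --- it simply asserts that the comparison goes through --- whereas you spell out the dictionary $X_g = s(g)$, $\chi_g = \text{parallelity}$ and check each piece of ribbon structure explicitly; but this is elaboration, not a different argument.
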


\begin{proof}
	The equivalence $Z/G(\sphere^1) \cong \cat{C}^Z/G$ of 2-vector spaces holds by definition of the orbifold construction and the definition of the orbifold category in \cite{kirrilovg04}. By Proposition~\ref{satzalgofk} the category $\cat{C}^Z/G$ naturally inherits from $\cat{C}^Z$ the structure of an complex finitely semi-simple ribbon category, and by Proposition~\ref{satzdescriptorbtensorstructure} the category $Z/G(\sphere^1)$ has the same type of structure. Comparing the description of these structures as given in Proposition~\ref{satzalgofk} and Proposition~\ref{satzdescriptorbtensorstructure} shows that agree.     
\end{proof}

 Diagrammatically, the above Theorem means that the square 
	\begin{center}
\begin{tikzpicture}[scale=2, implies/.style={double,double equal sign distance,-implies},
dot/.style={shape=circle,fill=black,minimum size=2pt,
	inner sep=0pt,outer sep=2pt},]
\node (A1) at (0,1) {$\substack{\text{3-2-1-dimensional } G\text{-equivariant} \\ \text{topological field theories} } $};
\node (A2) at (4,1) {$\substack{\text{complex finitely semisimple} \\ G\text{-ribbon categories} } $};
\node (B1) at (0,0) {$\substack{\text{3-2-1-dimensional} \\ \text{topological field theories} } $};
\node (B2) at (4,0) {$\substack{\text{complex finitely semisimple} \\ \text{ribbon categories} } $};
\path[->,font=\scriptsize]
(A1) edge node[above]{evaluation on the circle} (A2)
(A1) edge node[left]{orbifoldization $?/G$} (B1)
(A2) edge node[right]{orbifold category} (B2)
(B1) edge node[below]{evaluation on the circle} (B2);
\end{tikzpicture}
\end{center} commutes up to natural isomorphism.

\begin{corollary}[]\label{kororbifoldtheorymodular}
	For any extended $G$-equivariant topological field theory $Z : G\text{-}\Cob(3,2,1) \to \TwoVect$ the orbifold theory $Z/G: \Cob(3,2,1) \to \TwoVect$ is determined up to equivalence by the orbifold category $\cat{C}^Z/G$.
	\end{corollary}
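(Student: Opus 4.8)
The plan is to deduce the statement from Theorem~\ref{thmorbifoldtheorymodular} together with the reconstruction of 3-2-1-dimensional topological field theories from their value on the circle established in \cite{BDSPV153D}. First I would observe that $Z/G$ is, by its very definition, an object of the 2-groupoid $\Sym(\Cob(3,2,1),\TwoVect)$, i.e.\ a genuine 3-2-1-dimensional oriented topological field theory with values in $\TwoVect$. By \cite{BDSPV153D} evaluation on $\sphere^1$, remembered together with the ribbon structure it induces, exhibits $\Sym(\Cob(3,2,1),\TwoVect)$ as equivalent to the bicategory of (complex finitely semisimple, suitably non-degenerate) ribbon categories; in particular this evaluation functor reflects equivalences, so an extended topological field theory is determined up to equivalence by the ribbon category it assigns to the circle.

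Second I would invoke Theorem~\ref{thmorbifoldtheorymodular}, which does the real work: it identifies $Z/G(\sphere^1)$ with the orbifold category $\cat{C}^Z/G$ not merely as a 2-vector space but together with all of its ribbon structure. Combining the two steps yields the claim in the expected form: if $Z_1$ and $Z_2$ are extended $G$-equivariant topological field theories with $\cat{C}^{Z_1}/G \simeq \cat{C}^{Z_2}/G$ as complex finitely semisimple ribbon categories, then $Z_1/G(\sphere^1) \simeq Z_2/G(\sphere^1)$ as ribbon categories by Theorem~\ref{thmorbifoldtheorymodular}, and hence $Z_1/G \simeq Z_2/G$ in $\Sym(\Cob(3,2,1),\TwoVect)$ by the reconstruction result. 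An essentially equivalent route would avoid quoting the abstract equivalence of \cite{BDSPV153D} and argue directly from a generators-and-relations presentation of $\Cob(3,2,1)$: the monoidal product, braiding, twist and duality on $Z/G(\sphere^1)$ pin down the values of $Z/G$ on all generating 1- and 2-morphisms, and the remaining relations hold automatically because $Z/G$ is a symmetric monoidal functor.

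The only point needing care, and the one I expect to be the main (mild) obstacle, is matching the hypotheses of \cite{BDSPV153D} with the output of the orbifold construction: \cite{BDSPV153D} reconstructs a field theory from a \emph{modular} ribbon category, whereas a priori $\cat{C}^Z/G \simeq Z/G(\sphere^1)$ is only known to be ribbon. Here one uses that $Z/G(\sphere^1)$, being the circle value of the topological field theory $Z/G$, automatically carries the non-degeneracy needed to apply \cite{BDSPV153D}. When the monoidal unit of $\cat{C}^Z$ is simple this is exactly \cite{BDSPV153D} in its cleanest form (and $\cat{C}^Z/G$ then again has a simple unit, by Proposition~\ref{satzdescriptorbtensorstructure}, \ref{satzdescriptorbtensorstructurea}); in general one invokes the multimodular variant recorded in Section~\ref{secmod}, cf.\ Theorem~\ref{thmgmodcatoncircle}. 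Apart from this bookkeeping, the corollary is a purely formal consequence of Theorem~\ref{thmorbifoldtheorymodular}.
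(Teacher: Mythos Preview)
Your proposal is correct and follows essentially the same approach as the paper: invoke Theorem~\ref{thmorbifoldtheorymodular} to identify $Z/G(\sphere^1)$ with $\cat{C}^Z/G$ as ribbon categories, and then appeal to \cite{BDSPV153D} for the fact that a 3-2-1-dimensional topological field theory is determined up to equivalence by the ribbon category it assigns to the circle. Your additional discussion of the modularity hypothesis is more careful than the paper, which simply cites \cite{BDSPV153D} without further comment; this extra care is fine but not strictly needed here.
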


\begin{proof}
	This follows from Theorem~\ref{thmorbifoldtheorymodular} if we take into account that by \cite{BDSPV153D} any 3-2-1-dimensional topological field theory is determined up to equivalence by the complex finitely semisimple ribbon category it yields on the circle.     
	\end{proof}

 As an application we can give a generalization of \cite[Example~4.7]{schweigertwoikeofk} concerned with the orbifoldization of equivariant Dijkgraaf-Witten theories: 

\begin{proposition}[]\label{satzequivdwmodel}
	Let $Z_\lambda : J\text{-}\Cob(3,2,1) \to \TwoVect$ be the extended $J$-equivariant Dijkgraaf-Witten theory constructed in  \cite{maiernikolausschweigerteq} from a short exact sequence $0\to G \to H \stackrel{\lambda}{\to} J \to 0$ of finite groups. The orbifold theory $Z_\lambda /J$ is equivalent to the extended Dijkgraaf-Witten theory $Z_H$ for the group $H$, i.e.\
	\begin{align} \frac{Z_\lambda}{J}\cong Z_H\ .\end{align}
\end{proposition}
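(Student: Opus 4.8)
The plan is to realise $Z_\lambda$ as the pushforward along $\lambda$ of a particularly simple $H$-equivariant theory and then to invoke the composition law for pushforwards recalled in Section~\ref{secpushofk}, together with the identification of the orbifold construction with the pushforward along the map to the trivial group. Write $Z_{\id_H}\in\HSym(H\text{-}\Cob(3,2,1),\TwoVect)$ for the $H$-equivariant Dijkgraaf--Witten theory attached by \cite{maiernikolausschweigerteq} to the trivial short exact sequence $0\to 1\to H\xrightarrow{\id_H}H\to 0$. Since $(\id_H)_*$ is an equivalence of stacks, each homotopy fibre $(\id_H)_*^{-1}[\xi]$ is contractible, so $Z_{\id_H}$ is the \emph{constant} $H$-equivariant theory: on objects $(S,\xi)\mapsto\FinVect$, and its underlying 2-vector bundle $\widehat{Z_{\id_H}}(S)=Z_{\id_H}(S,?)$ is the constant bundle at $\FinVect$ over $\Pi(S,BH)\simeq\PBun_H(S)$ (Lemma~\ref{lemmamappingspacebundles}), with the evident identity-type data on bordisms and on bordisms with corners.

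First I would verify that $Z_{\id_H}/H\cong Z_H$. By Definition~\ref{defofkext} and Proposition~\ref{satzorbifoldconcrete}, \ref{satzorbifoldconcretea}, the orbifold theory sends $S$ to the 2-vector space of parallel sections of the constant bundle at $\FinVect$ over $\Pi(S,BH)$, which is the functor category $[\Pi(S,BH),\FinVect]=[\PBun_H(S),\FinVect]$; this is precisely the value $Z_H(S)$ of the extended Dijkgraaf--Witten theory for $H$. On a bordism $\Sigma:S_0\to S_1$, Proposition~\ref{satzorbifoldconcrete}, \ref{satzorbifoldconcreteb} describes $Z_{\id_H}/H(\Sigma)$ as the $\Par$ pull--push along $\PBun_H(S_0)\leftarrow\PBun_H(\Sigma)\to\PBun_H(S_1)$, and since the bundle is constant this reduces to the left and right Kan extension maps that define $Z_H$; likewise on 2-morphisms by Proposition~\ref{satzorbifoldconcrete}, \ref{satzorbifoldconcretec}. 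Hence $Z_{\id_H}/H\cong Z_H$ as extended topological field theories.

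Next I would establish $Z_\lambda\cong\lambda_*Z_{\id_H}$. Unwinding Definition~\ref{defpushforward}: the construction $\widehat{Z_{\id_H}}^{\lambda}$ sends $(S,\xi)$ with $\xi:S\to BJ$ to the pullback $q^*\widehat{Z_{\id_H}}(S)$ along $q:\lambda_*^{-1}[\xi]\to\Pi(S,BH)$; as $\widehat{Z_{\id_H}}(S)$ is constant at $\FinVect$, so is this pullback, whence $\lambda_*Z_{\id_H}(S,\xi)=\Par\bigl(\text{constant bundle at }\FinVect\text{ over }\lambda_*^{-1}[\xi]\bigr)=[\lambda_*^{-1}[\xi],\FinVect]$. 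By Lemma~\ref{lemmamappingspacebundles} the groupoid $\lambda_*^{-1}[\xi]$ is equivalent to the groupoid of $H$-bundles on $S$ together with an identification of their associated $J$-bundle with $\xi$, so $[\lambda_*^{-1}[\xi],\FinVect]$ is exactly the value of the $J$-equivariant Dijkgraaf--Witten theory $Z_\lambda$ of \cite{maiernikolausschweigerteq} on $(S,\xi)$. The analogous identifications on 1- and 2-morphisms --- using that every bundle occurring is constant, so that the pull--push data of $\lambda_*$ reproduces the pull--push and relative-homotopy data of $Z_\lambda$, together with the gluing property of the stack $\Pi(?,BG)$ exploited in the proof of Theorem~\ref{thmhatconstruction} --- upgrade this to an equivalence of extended $J$-equivariant theories.

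Finally, writing $!_J:J\to 1$ and $!_H:H\to 1$ for the terminal homomorphisms, so that $?/J=(!_J)_*$, $?/H=(!_H)_*$ and $!_J\circ\lambda=!_H$, the composition law $(\mu\circ\lambda)_*\cong\mu_*\circ\lambda_*$ from Section~\ref{secpushofk} gives
\begin{align*}
\frac{Z_\lambda}{J}=(!_J)_*Z_\lambda\cong(!_J)_*\lambda_*Z_{\id_H}\cong(!_J\circ\lambda)_*Z_{\id_H}=(!_H)_*Z_{\id_H}=\frac{Z_{\id_H}}{H}\cong Z_H ,
\end{align*}
which is the assertion. The hard part will be the equivalence $Z_\lambda\cong\lambda_*Z_{\id_H}$: matching, with all its coherence data, the construction of equivariant Dijkgraaf--Witten theory in \cite{maiernikolausschweigerteq} with the pushforward $\lambda_*$ at the level of 1- and 2-morphisms. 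Once one accepts that $Z_\lambda$ counts $H$-lifts of the decorating $J$-bundle, the remaining bookkeeping with homotopy fibres and the composition law is routine.
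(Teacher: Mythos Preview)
Your approach is plausible but takes a genuinely different route from the paper. The paper's proof is a three-line argument: cite \cite[Proposition~35]{maiernikolausschweigerteq} for the algebraic computation $\cat{C}^{Z_\lambda}/J\cong D(H)\text{-}\Mod$, apply Theorem~\ref{thmorbifoldtheorymodular} to identify this ribbon category with $(Z_\lambda/J)(\sphere^1)$, note that $Z_H(\sphere^1)\cong D(H)\text{-}\Mod$ as well, and finish with Corollary~\ref{kororbifoldtheorymodular} (which rests on the classification in \cite{BDSPV153D}) to conclude that two 3-2-1-theories agreeing on the circle are equivalent. In particular the paper deliberately showcases this proposition as an \emph{application} of the comparison Theorem~\ref{thmorbifoldtheorymodular}, avoiding any direct manipulation of the pushforward.

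Your argument, by contrast, is structural: identify $Z_\lambda\cong\lambda_*Z_{\id_H}$, identify $Z_{\id_H}/H\cong Z_H$, and invoke the composition law for pushforwards. This has the virtue of being dimension-independent and of producing an explicit equivalence rather than appealing to a classification, but it trades the paper's short citation for two substantial verifications. First, the composition law $(\mu\circ\lambda)_*\cong\mu_*\circ\lambda_*$ you invoke is only \emph{asserted} in Section~\ref{secpushofk}; the paper explicitly says the details ``will not be pursued further in this article''. Second, the equivalence $Z_\lambda\cong\lambda_*Z_{\id_H}$ (and likewise $Z_{\id_H}/H\cong Z_H$) must be checked against the specific construction of \cite{maiernikolausschweigerteq} at the level of extended theories, including all coherence 2-cells---you correctly flag this as the hard part, and it is real work, not just bookkeeping. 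So your route is sound in outline but, relative to the paper, replaces one cited algebraic computation and one classification result by two bicategorical identifications and an unproven composition law.
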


\begin{proof}
	In \cite[Proposition~35]{maiernikolausschweigerteq} the orbifold category $\cat{C}^{Z_\lambda}/J$ of $\cat{C}^{Z_\lambda}$ is computed to be category $D(H)\text{-}\Mod$ of finite-dimensional modules over the Drinfeld double $D(H)$ of the group $H$. By Theorem~\ref{thmorbifoldtheorymodular} this is the category that $Z_\lambda /J$ assigns to the circle. Since this category is also the value of $Z_H$ on the circle we can use Corollary~\ref{kororbifoldtheorymodular} to deduce the desired assertion.     
\end{proof}

 One should appreciate that this statement, although more general, admits a significantly simpler and more conceptual proof than the corresponding statement in \cite{schweigertwoikeofk} because it can be completely played back to the categories obtained on the circle.

In another application we will use topological field theory as a counting device: For this let us first recall the following well-known fact which in a different language appears for instance in \cite[Corollary~IV.12.1.2]{turaev1}:

\begin{lemma}\label{lemmanumberofsimples}
Let $Z:\Cob(n,n-1,n-2) \to \TwoVect$ be an extended topological field theory, then
\begin{align}
Z(\torus^n) = \# \ \text{simple objects in $Z(\torus^{n-2})$} \ . 
\end{align}
\end{lemma}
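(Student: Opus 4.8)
The plan is to prove the statement by dimensional reduction, peeling off the two $\sphere^1$-factors in $\torus^n = \torus^{n-2}\times\sphere^1\times\sphere^1$ one at a time and using that crossing with $\sphere^1$ corresponds, under any symmetric monoidal functor, to taking the trace of an identity. First recall what kind of object each value of $Z$ is: $\torus^{n-2}$ is an object of $\Cob(n,n-1,n-2)$, so $Z(\torus^{n-2})=:\cat{C}$ is a 2-vector space; the closed $(n-1)$-manifold $\torus^{n-1}$ is a 1-endomorphism of the monoidal unit $\emptyset$, so $Z(\torus^{n-1})$ is a linear endofunctor of $Z(\emptyset)\simeq\FinVect$, equivalently a finite-dimensional vector space; and the closed $n$-manifold $\torus^n$ is a 2-endomorphism of $\id_\emptyset$, hence $Z(\torus^n)$ is an endomorphism of $\id_{\FinVect}$, i.e.\ a complex number. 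The key structural input is that $\Cob(n,n-1,n-2)$ is a symmetric monoidal bicategory with duals (Definition~\ref{defmscbordcattarget} with $T=\star$) and that the symmetric monoidal functor $Z$ preserves this duality data.

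First I would carry out the top-dimensional step. Writing $W:=\torus^{n-1}$ and cutting one circle factor of $W\times\sphere^1=\torus^n$ into an interval exhibits $\torus^n$ as the manifold obtained by gluing the two ends of $W\times[0,1]=\id_W$; in categorical terms $\torus^n$ is the $1$-dimensional trace of the identity 2-morphism $\id_{Z(W)}$, formed from the adjunction of the 1-morphism $W$ with its orientation reversal via the cup and cap $n$-bordisms. Since $Z$ is symmetric monoidal and preserves duals, $Z(\torus^n)=\operatorname{tr}\bigl(\id_{Z(W)}\bigr)$, and for the finite-dimensional vector space $Z(W)$ this trace in $\FinVect$ is precisely $\dim_\mathbb{C} Z(W)$. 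Hence $Z(\torus^n)=\dim_\mathbb{C} Z(\torus^{n-1})$; this is the familiar statement that $Z$ of the mapping torus of an identity is the dimension of the underlying vector space.

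Next I would run the same argument one categorical level higher for $\torus^{n-1}=\torus^{n-2}\times\sphere^1$. Cutting the remaining circle into an interval presents $\torus^{n-1}$ as the closure of the cylinder $\torus^{n-2}\times[0,1]=\id_{\cat{C}}$, i.e.\ as the bicategorical trace of the identity 1-endomorphism of $\cat{C}=Z(\torus^{n-2})$, built from the dual of the object $\torus^{n-2}$ (its orientation reversal) and the corresponding cup and cap $(n-1)$-bordisms. Preservation of this data under $Z$ gives $Z(\torus^{n-1})\cong\operatorname{tr}\bigl(\id_{\cat{C}}\bigr)$ as vector spaces, where the trace is now computed in $\TwoVect$ and lands in $\operatorname{End}_{\TwoVect}(\FinVect)\simeq\FinVect$. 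A direct computation with the duality of $\cat{C}$ in $\TwoVect$ identifies $\operatorname{tr}(\id_{\cat{C}})$ with $\bigoplus_{X}\operatorname{End}_{\cat{C}}(X)$, the sum over representatives $X$ of the isomorphism classes of simple objects of $\cat{C}$; since $\cat{C}$ is finitely semisimple over $\mathbb{C}$ each summand is $\mathbb{C}$, so $\dim_\mathbb{C} Z(\torus^{n-1})$ equals the number of simple objects of $Z(\torus^{n-2})$. Combining the two steps yields $Z(\torus^n)=\dim_\mathbb{C} Z(\torus^{n-1})=\#\{\text{simple objects in }Z(\torus^{n-2})\}$.

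The main obstacle is making precise, at the bicategorical level, that ``crossing with $\sphere^1$'' equals ``trace of an identity'': one has to spell out the cup and cap $(n-1)$- and $n$-bordisms exhibiting the orientation reversal of $\torus^{n-2}$ (resp.\ of $\torus^{n-1}$) as a dual, verify the snake identities geometrically, and check that a symmetric monoidal functor transports this whole configuration. The top-dimensional instance is classical (the mapping torus of the identity), while the once-extended instance is the only genuinely bicategorical point. Everything else --- the identification of $\operatorname{tr}(\id_{\cat{C}})$ with the ``cocenter'' $\bigoplus_X\operatorname{End}_{\cat{C}}(X)$ and its dimension count --- is a routine calculation in $\TwoVect$, and one could alternatively quote it from the treatment of the dimension of a 2-vector space in \cite{BDSPV153D} or recognize the whole statement as the extended refinement of \cite[Corollary~IV.12.1.2]{turaev1}.
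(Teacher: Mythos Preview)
Your proof is correct and follows essentially the same strategy as the paper: peel off the two circle factors of $\torus^n=\torus^{n-2}\times\sphere^1\times\sphere^1$ and identify each as a categorical trace of an identity, obtaining $Z(\torus^{n-1})\cong\bigoplus_X\End_{\cat{C}}(X)$ and $Z(\torus^n)=\dim Z(\torus^{n-1})$. The only differences are cosmetic: the paper treats the two steps in the opposite order and, rather than spelling out the top-dimensional trace argument, simply cites \cite[Theorem~III.2.1.3]{turaev1} for $Z(\torus^n)=\dim Z(\torus^{n-1})$.
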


\begin{proof}
Set $\cat{C}:=Z(\torus^{n-2})$, then $\cat{C}$ is dualizable in (the homotopy category of) $\TwoVect$
and the vector space assigned to $\torus^{n-1}=\torus^{n-2}\times\mathbb{S}^1$ is the concatenation of the coevaluation and evaluation of $\cat{C}$, which is given by $\bigoplus_{j=1}^n \Hom_\cat{C}(X_j,X_j)$ where the sum runs over the simple objects. The dimension of this vector space is the number of simple objects. By \cite[Theorem~III.2.1.3]{turaev1} this number is also the invariant that $Z$ assigns to the top-dimensional manifold $\mathbb{T}^n$. 
\end{proof}

In order to combine this fact with the orbifold construction, we recall that the groupoid of $G$-bundles over $\torus^n$ for $n\ge 1$ is equivalent to the action groupoid $\Com (G^n)//G$ of the action of $G$ on $n$-tuples of mutually commuting group elements by conjugation. Hence, a $G$-bundle over $\torus^n$ can be described by $n$ group elements $g_1,\dots,g_n \in G$ such that $g_ig_j = g_jg_i$ for all $1\le i,j\le n$. 

\begin{theorem}[]\label{thmnumberofsimpleobjectsorbifold}
	Let $G$ be a finite group and $Z: G\text{-}\Cob(n,n-1,n-2) \to \TwoVect$ an extended $G$-equivariant topological field theory. Then
	\begin{align}
  	\# \ \text{simple objects in $\frac{Z}{G}(\torus^{n-2})$} =   \frac{1}{|G|} \sum_{(g_1,\dots,g_n) \in \Com (G^n)}   Z(\torus^n,g_1,\dots,g_n) \ . \label{dimensionformula1}
	\end{align} For $n=3$ we also find the formula
	\begin{align}
	\# \ \text{simple objects in $\frac{\cat{C}^Z}{G}$} =\frac{1}{|G|} \sum_{(g_1,g_2,g_3) \in \Com (G^3)}   Z(\torus^3,g_1,g_2,g_3) \label{dimensionformula2}
	\end{align} using the orbifold category $\cat{C}^Z/G$ of the $G$-ribbon category $\cat{C}^Z$ that $Z$ gives rise to.
	\end{theorem}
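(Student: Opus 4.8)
The plan is to deduce \eqref{dimensionformula1} by applying Lemma~\ref{lemmanumberofsimples} to the orbifold theory and then computing the resulting torus invariant via the explicit pull--push description of the orbifold construction. Since $Z/G:\Cob(n,n-1,n-2)\to\TwoVect$ is an extended topological field theory, Lemma~\ref{lemmanumberofsimples} gives
\begin{align}
\#\ \text{simple objects in }\frac{Z}{G}(\torus^{n-2}) = \frac{Z}{G}(\torus^n)\ ,
\end{align}
so it remains to identify the scalar that $Z/G$ assigns to the closed $n$-manifold $\torus^n$, viewed as a 2-endomorphism of the identity 1-morphism on the empty $(n-2)$-manifold.

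For this I would unwind $Z/G=\Par\circ\widehat{?}$ together with Proposition~\ref{satzorbifoldconcrete},~\ref{satzorbifoldconcretec}. Because all the boundary data of $\torus^n$ as a 2-cell is the empty manifold, the groupoids $\Pi(\emptyset,BG)$ occurring in the limits are trivial, and the index groupoid of the limit in Proposition~\ref{satzorbifoldconcrete},~\ref{satzorbifoldconcretec} collapses to $\Pi(\torus^n,BG)$. By Lemma~\ref{lemmamappingspacebundles} and the recollection preceding the theorem, $\Pi(\torus^n,BG)\simeq\PBun_G(\torus^n)\simeq\Com(G^n)//G$, with object set the $n$-tuples of mutually commuting group elements and $G$ acting by simultaneous conjugation; homotopy invariance of $Z$ makes $Z(\torus^n,g_1,\dots,g_n)\in\mathbb{C}$ depend only on the isomorphism class of the corresponding bundle, i.e.\ only on the simultaneous-conjugacy class of $(g_1,\dots,g_n)$. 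The pull--push prescribed in Proposition~\ref{satzorbifoldconcrete},~\ref{satzorbifoldconcretec}, restricted to endomorphisms of the monoidal unit, then reduces to the groupoid integral of the locally constant scalar $\xi\mapsto Z(\torus^n,\xi)$ over $\PBun_G(\torus^n)$, that is
\begin{align}
\frac{Z}{G}(\torus^n) = \sum_{[\xi]\in\pi_0\PBun_G(\torus^n)} \frac{Z(\torus^n,\xi)}{|\Aut\xi|} = \frac{1}{|G|}\sum_{(g_1,\dots,g_n)\in\Com(G^n)} Z(\torus^n,g_1,\dots,g_n)\ ,
\end{align}
the last equality being the orbit--stabilizer relation. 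This proves \eqref{dimensionformula1}.

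For \eqref{dimensionformula2} I would specialize to $n=3$, where $\torus^{n-2}=\sphere^1$, and invoke Theorem~\ref{thmorbifoldtheorymodular}: the equivalence $Z/G(\sphere^1)\cong\cat{C}^Z/G$ of complex finitely semisimple ribbon categories is in particular an equivalence of categories, so the two sides have the same number of simple objects. Combining this with \eqref{dimensionformula1} for $n=3$ yields \eqref{dimensionformula2}.

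The main obstacle is the middle step: verifying that the pull--push of Proposition~\ref{satzorbifoldconcrete},~\ref{satzorbifoldconcretec} genuinely computes the \emph{weighted} groupoid integral, with the factors $1/|\Aut\xi|$, rather than an unweighted sum over isomorphism classes of bundles --- the two agree only in the degenerate case where $Z(\torus^n,\xi)$ is independent of $\xi$. This is precisely the point at which one must use the pushforward of limits in $\TwoVect$ from \cite[Section~2.1]{swpar} (equivalently, the behaviour of $\Par$ on the homotopy fibers and their automorphism groups) and not merely the functoriality of $Z/G$; once this bookkeeping is carried out on the trivial-boundary 2-cell $\torus^n$, the remainder of the argument is formal.
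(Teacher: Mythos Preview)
Your overall strategy matches the paper's exactly: apply Lemma~\ref{lemmanumberofsimples} to $Z/G$, identify $\frac{Z}{G}(\torus^n)$ as a groupoid integral over $\Com(G^n)//G$, and rewrite via orbit--stabilizer; then deduce \eqref{dimensionformula2} from Theorem~\ref{thmorbifoldtheorymodular}.

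The only substantive difference is in how you justify the groupoid-integral identity. The paper does not unwind Proposition~\ref{satzorbifoldconcrete},~\ref{satzorbifoldconcretec} on the closed 2-cell $\torus^n$; instead it observes that on closed top-dimensional manifolds the extended orbifold construction restricts to the non-extended one (this is the content of the comparison proposition immediately following Proposition~\ref{satzorbifoldconcrete}) and then invokes \cite[Corollary~4.4~(c)]{schweigertwoikeofk}, which already states
\[
\frac{Z}{G}(\torus^n)=\int_{\Com(G^n)//G} Z(\torus^n,\,?)=\sum_{[g_1,\dots,g_n]}\frac{Z(\torus^n,g_1,\dots,g_n)}{|\Aut(g_1,\dots,g_n)|}\,.
\]
This bypasses precisely the ``main obstacle'' you flag: the $1/|\Aut|$ weights come for free from the cited non-extended result rather than from a direct analysis of the push map in $\TwoVect$. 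Your route is not wrong, but it reproves a special case of that corollary; citing it (via the restriction-compatibility) is shorter and avoids the bookkeeping you were worried about.
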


\spaceplease
\begin{proof}
	Once we prove \eqref{dimensionformula1}, formula \eqref{dimensionformula2} will follow from Theorem~\ref{thmorbifoldtheorymodular}.
	Hence, we only have to prove \eqref{dimensionformula1}:
	By Lemma~\ref{lemmanumberofsimples} we find
	\begin{align}
	\# \ \text{simple objects in $\frac{Z}{G}(\torus^{n-2})$} = \frac{Z}{G}(\torus^n)\ .
	\end{align} The number $Z/G (\torus^n)$ can be computed using the non-extended orbifold construction. Knowing the groupoid of $G$-bundles over $\torus^n$ we can use \cite[Corollary~4.4 (c)]{schweigertwoikeofk} to express $Z/G(\torus^n)$ as the integral
	\begin{align}
	\frac{Z}{G}(\torus^n) = \int_{(g_1,\dots,g_n) \in \Com (G^n)//G} Z(\torus^n,g_1,\dots,g_n) = \sum_{[g_1,\dots,g_n] \in \pi_0 (   \Com (G^n)//G  )} \frac{Z(\torus^n,g_1,\dots,g_n)}{|\hspace{-.16667em} \Aut(g_1,\dots,g_n)|}
	\end{align} with respect to groupoid cardinality. By the orbit stabilizer Theorem we obtain \begin{align} |   \hspace{-.16667em}   \Aut(g_1,\dots,g_n)| = \frac{|G|}{ |\mathcal{O}(g_1,\dots,g_n)|}\ , \end{align} where $\mathcal{O}(g_1,\dots,g_n)$ is the orbit of $(g_1,\dots,g_n)$ in $\Com (G^n)//G$. This implies
		\begin{align}
	\frac{Z}{G}(\torus^n) = \frac{1}{|G|} \sum_{(g_1,\dots,g_n) \in \Com (G^n)}   Z(\torus^n,g_1,\dots,g_n) 
	\end{align} and hence the result.     
	\end{proof}

Even in the non-extended case we can read off from the above proof that
\begin{align}
\frac{1}{|G|} \sum_{(g_1,\dots,g_n) \in \Com (G^n)}   Z(\torus^n,g_1,\dots,g_n)  = \frac{Z}{G}(\torus^n) = \dim \frac{Z}{G}(\torus^{n-1})\label{countingeqngtft}
\end{align} is a non-negative integer. This provides constraints for manifold invariants which arise from a (not necessarily extended) equivariant topological field theory:

\begin{corollary}
	Consider an invariant of closed oriented $n$-dimensional manifolds decorated with $G$-bundles for a finite group $G$ which yields on the torus $\torus^n$ decorated with the bundle specified by commuting group elements $(g_1,\dots,g_n) \in G^n$ the number $z_{g_1,\dots,g_n} \in \mathbb{C}$. If the invariant arises from an $G$-equivariant topological field theory, then
$\sum_{(g_1,\dots,g_n) \in \Com (G^n)} z_{g_1,\dots,g_n}$ is a non-negative integer multiple of $|G|$. 
	\end{corollary}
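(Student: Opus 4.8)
The plan is to read the statement directly off equation~\eqref{countingeqngtft}, so the proof amounts to unpacking the hypothesis. First I would make the assumption precise: saying that the invariant \emph{arises from} a $G$-equivariant topological field theory means that there is a (not necessarily extended) $G$-equivariant topological field theory $Z$ with $z_{g_1,\dots,g_n}=Z(\torus^n,g_1,\dots,g_n)$ for every commuting tuple $(g_1,\dots,g_n)\in\Com(G^n)$. In particular the orbifold theory $\frac{Z}{G}$ is defined; in the non-extended situation this is the construction of \cite{schweigertwoikeofk}, and even when $Z$ is extended we may restrict it to a non-extended theory without changing its values on closed $n$- and $(n-1)$-manifolds.

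Next I would invoke \eqref{countingeqngtft}, which has already been established (it is read off the proof of Theorem~\ref{thmnumberofsimpleobjectsorbifold}, and the remark preceding this corollary notes that the argument applies verbatim in the non-extended case): it asserts
\begin{align}
\frac{1}{|G|}\sum_{(g_1,\dots,g_n)\in\Com(G^n)} Z(\torus^n,g_1,\dots,g_n)\;=\;\frac{Z}{G}(\torus^n)\;=\;\dim\frac{Z}{G}(\torus^{n-1}).
\end{align}
The first equality here is the groupoid-cardinality computation using $\PBun_G(\torus^n)\simeq\Com(G^n)//G$, \cite[Corollary~4.4]{schweigertwoikeofk} and the orbit--stabilizer theorem; the second identifies $\frac{Z}{G}(\torus^n)$ with $\dim\frac{Z}{G}(\torus^{n-1})$ because $\torus^n=\torus^{n-1}\times\sphere^1$ is the mapping torus of $\id_{\torus^{n-1}}$, so a topological field theory sends it to the trace of the identity of the (finite-dimensional) vector space $\frac{Z}{G}(\torus^{n-1})$, i.e.\ to its dimension (cf.\ the reasoning behind Lemma~\ref{lemmanumberofsimples}).

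Combining the two equalities gives $\sum_{(g_1,\dots,g_n)\in\Com(G^n)} z_{g_1,\dots,g_n}=|G|\cdot\dim\frac{Z}{G}(\torus^{n-1})$, and the factor $\dim\frac{Z}{G}(\torus^{n-1})$ is a non-negative integer, which is exactly the assertion. I do not expect a genuine obstacle: the only points deserving a sentence of care are that $\frac{Z}{G}$ is available in the non-extended case and that $\frac{Z}{G}(\torus^{n-1})$ is genuinely finite-dimensional — the latter because topological field theories valued in $\Vect$ assign dualizable, hence finite-dimensional, vector spaces to closed manifolds.
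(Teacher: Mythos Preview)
Your proposal is correct and matches the paper's approach exactly: the corollary is stated immediately after \eqref{countingeqngtft} as a direct consequence of that identity (which the paper has already noted holds in the non-extended case), and your unpacking of the hypothesis together with the observation that $\dim\frac{Z}{G}(\torus^{n-1})$ is a non-negative integer is precisely what is intended. The paper offers no further argument beyond the sentence preceding the corollary.
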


\begin{example}[Permutation orbifolds]\label{expermutationorbifolds}
	Let $\cat{C}$ be a modular category and $Z: \Cob(3,2,1) \to \TwoVect$ the extended topological field theory giving us $\cat{C}$ upon evaluation on the circle ($Z$ is unique up to equivalence). Consider now a finite group, which for illustration purposes we take to be the permutation group $\operatorname{S}_n$ on $n$ letters (this is not really a restriction because any finite group embeds into a permutation group). The pullback $\operatorname{Cov}^* Z$ of $Z$ along the cover functor $\operatorname{Cov} : \operatorname{S}_n\text{-}\Cob(3,2,1) \to \Cob(3,2,1)$ from Example~\ref{extotalspacefunctor} is a $\operatorname{S}_n$-equivariant topological field theory. Using Theorem~\ref{thmorbifoldtheorymodular} we see that the evaluation of the orbifold theory $(\operatorname{Cov}^* Z) / \operatorname{S}_n$ on the circle is what is commonly referred to as the \emph{permutation orbifold} of $\cat{C}$ and which is denoted by $\cat{C} \wr \operatorname{S}_n$ in \cite{bantay98,bantay02}. Since a permutation orbifold is a special case of an orbifold theory, we can use Theorem~\ref{thmnumberofsimpleobjectsorbifold} to compute the number of simple objects of $\cat{C} \wr \operatorname{S}_n$. 
	
	To this end, note that for any finite group $G$ and mutually commuting groups elements $g_1,g_2,g_3 \in G$ we can define the quotient $P_{g_1,g_2,g_3}$ of $\mathbb{R}^3 \times G$ by
	\begin{align} (x_1+1,x_2,x_3,h) &\sim (x_1,x_2,x_3,hg_1)\ ,\\   (x_1,x_2+1,x_3,h) &\sim (x_1,x_2,x_3,hg_2)\ ,  \\ (x_1,x_2,x_3+1,h) &\sim (x_1,x_2,x_3,hg_3) \end{align} for all $x_1,x_2,x_3 \in \mathbb{R}$ and $h\in G$. The projection $\mathbb{R}^3 \times G \to \mathbb{R}^3$ induces a map $P_{g_1,g_2,g_3} \to \torus^3$, which is a $G$-bundle with holonomy values $g_1,g_2$ and $g_3$ along the generators of the fundamental group of $\torus^3$. The subgroup $\langle g_1,g_2,g_3\rangle \subset G$ generated by $g_1,g_2$ and $g_3$ acts from the right on $G$. It is easy to see that
	\begin{align}
	P_{g_1,g_2,g_3} \cong \coprod_{| G / \langle g_1,g_2,g_3\rangle| } \torus^3
	\end{align} as manifolds. 
	
	Going back to $G=\operatorname{S}_n$ we find by Theorem~\ref{thmnumberofsimpleobjectsorbifold}
	\begin{align}
	\# \ \text{simple objects in $\cat{C} \wr \operatorname{S}_n$} = \frac{1}{n!} \sum_{\substack{ \text{mutually commuting} \\ \text{permutations} \\ \sigma_1,\sigma_3,\sigma_3 \\ \text{on $n$ letters}        }}   \left(  \# \ \text{simple objects in $\cat{C}$} \right) ^{  |  \operatorname{S}_n / \langle  \sigma_1 , \sigma_2 , \sigma_3  \rangle       |      }.
	\end{align} Hence, Theorem~\ref{thmnumberofsimpleobjectsorbifold} specializes to the formula given in \cite[Equation~(3)]{bantay02}. In fact, our orbifold construction allows for a uniform treatment of the entire theory of permutation orbifolds. 
	
	In \cite{muellerwoike} we also explain how Theorem~\ref{thmnumberofsimpleobjectsorbifold} yields the formulae for the number of simple twisted representations of finite groups and the number of simple representations of twisted Drinfeld doubles of finite groups found in \cite{willterongerbesgrpds}.

	\end{example}

\subsection{Equivariant Verlinde algebra and modularity\label{secmod}}
The evaluation of a 3-2-1-dimensional topological field theory on the circle yields a modular tensor category by \cite{BDSPV153D} (possibly with non-simple unit, see however \cite[Lemma~5.3]{BDSPV153D}). 
In this section we give the equivariant version of this result. To make contact to an equivariant modularity we use the equivariant Verlinde algebra from \cite{kirrilovg04} whose definition can be understood by evaluation of the modular functor corresponding to the equivariant theory on the 2-torus $\torus^2$, see \cite[Section~8]{kirrilovg04}, which is inspired by \cite[Section~8.6]{turaevhqft}. We begin by working out these ideas in the language of coends and based on a strong geometric motivation.

Let $Z: G\text{-}\Cob(n,n-1,n-2) \to \TwoVect$ be an extended $G$-equivariant topological field theory. Any $(n-1)$-dimensional closed oriented manifold $\Sigma$ together with a map $\varphi : \Sigma \to BG$ gives rise to a 2-linear map $Z(\Sigma,\varphi) : \FinVect \to \FinVect$ and hence to a vector space, 
which by abuse of notation we will also denote by $Z(\Sigma,\varphi)$. The dependence on $\varphi$ is functorial, so we get a functor
\begin{align}
Z(\Sigma,?) : \Pi(\Sigma,BG) \to \FinVect, \quad \varphi \mapsto Z(\Sigma,\varphi),
\end{align} i.e.\ a representation of (or in more geometric terms: a vector bundle over) the groupoid of $G$-bundles over $\Sigma$. Clearly, this is the representation we obtain be seeing $Z$ as a non-extended theory and applying \cite[Proposition~2.8]{schweigertwoikeofk}. 

These vector bundles enjoy the following gluing properties which follow directly from the functoriality of $Z$ and \eqref{gluinglawbimodules}:

\begin{lemma}[]\label{lemmagluingreps}
Let $G$ be a finite group,
 $Z: G\text{-}\Cob(n,n-1,n-2) \to \TwoVect$ an extended $G$-equivariant topological field theory and $\Sigma$ a closed oriented $(n-1)$-dimensional manifold obtained by gluing the oriented $(n-1)$-dimensional manifolds $\Sigma'$ and $\Sigma''$ along the $(n-2)$-dimensional closed oriented manifold $S$. 
 Then for two maps $\varphi' : \Sigma' \to BG$ and $\varphi'' : \Sigma'' \to BG$ with $\varphi'|_S = \varphi''|_S =: \xi$ we have
\begin{align}
Z(\Sigma, \varphi' \cup_S \varphi'') \cong \int^{X\in Z(S,\xi)} Z(\Sigma'',\varphi'') X \otimes \Hom_{Z(S,\xi)} (X, Z(\Sigma',\varphi')\mathbb{C})
\end{align} by a canonical isomorphism of vector spaces. 
\end{lemma}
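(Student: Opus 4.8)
The plan is to recognise the gluing appearing in the statement as a composition of $1$-morphisms in $G\text{-}\Cob(n,n-1,n-2)$ and then feed this through the functoriality of $Z$ together with the coend composition law \eqref{gluinglawbimodules} for $2$-linear maps. First I would regard $\Sigma'$ as a $1$-morphism $\emptyset \to S$ in $G\text{-}\Cob(n,n-1,n-2)$: it is a compact oriented $(n-1)$-dimensional manifold whose outgoing boundary is $S$ (equipped with suitable collars), decorated with $\varphi'$, and whose restriction to the outgoing boundary is $\xi$. Symmetrically, with the appropriate orientation on the gluing region, $\Sigma''$ becomes a $1$-morphism $S \to \emptyset$ decorated with $\varphi''$ and restricting to $\xi$ on the incoming boundary. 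By Definition~\ref{defmscbordcattarget} the composite $1$-morphism $\Sigma'' \circ \Sigma' : \emptyset \to \emptyset$ is obtained by gluing along collars and maps, hence it is precisely the closed manifold $\Sigma$ equipped with the glued map $\varphi' \cup_S \varphi''$; different choices of collars give canonically $2$-isomorphic $1$-morphisms and therefore do not affect the associated vector space. Since $Z$ is a (symmetric monoidal, in particular weak) $2$-functor, there is a canonical $2$-isomorphism $Z(\Sigma,\varphi' \cup_S \varphi'') \cong Z(\Sigma'',\varphi'') \circ Z(\Sigma',\varphi')$ of $2$-linear maps $\FinVect \to \FinVect$.

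Next I would unpack both sides as vector spaces. Under the equivalence $\End_{\TwoVect}(\FinVect) \simeq \FinVect$, $F \mapsto F(\mathbb{C})$ — which is the abuse of notation already in force in the paragraph preceding the Lemma — a composite $G \circ F$ of $2$-linear maps $F : \FinVect \to \cat{V}$ and $G : \cat{V} \to \FinVect$ corresponds to the vector space $(G\circ F)(\mathbb{C}) \cong \Hom_{\FinVect}(\mathbb{C}, GF\mathbb{C})$. Applying \eqref{gluinglawbimodules} with $\cat{U} = \cat{W} = \FinVect$, $\cat{V} = Z(S,\xi)$, $F = Z(\Sigma',\varphi')$, $G = Z(\Sigma'',\varphi'')$ and $U = W = \mathbb{C}$, and using $\Hom_{\FinVect}(\mathbb{C}, V) \cong V$ in the first and last step, gives
\begin{align}
Z(\Sigma'',\varphi'') \circ Z(\Sigma',\varphi') &\cong \Hom_{\FinVect}\big(\mathbb{C},\, Z(\Sigma'',\varphi'')\, Z(\Sigma',\varphi')\,\mathbb{C}\big) \\
&\cong \int^{X \in Z(S,\xi)} \Hom_{\FinVect}\big(\mathbb{C},\, Z(\Sigma'',\varphi'')\,X\big) \otimes \Hom_{Z(S,\xi)}\big(X,\, Z(\Sigma',\varphi')\,\mathbb{C}\big) \\
&\cong \int^{X \in Z(S,\xi)} Z(\Sigma'',\varphi'')\,X \,\otimes\, \Hom_{Z(S,\xi)}\big(X,\, Z(\Sigma',\varphi')\,\mathbb{C}\big),
\end{align}
where $Z(\Sigma'',\varphi'')\,X$ in the last line means $Z(\Sigma'',\varphi'')(X)$ viewed as a vector space. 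Concatenating with the canonical $2$-isomorphism from the first paragraph yields the claimed natural isomorphism, and naturality in $\varphi', \varphi''$ follows since every step is functorial in the decorating maps.

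I do not expect a genuine obstacle here: all of the analytic content is already contained in Definition~\ref{defmscbordcattarget} (gluing of decorated bordisms along collars) and in the composition law \eqref{gluinglawbimodules}, and the statement is essentially the transcription of the non-equivariant gluing lemma of \cite{BDSPV153D} into the $G$-equivariant, $\varphi$-decorated setting. The only points that require a little care — and they are entirely routine — are matching the collars and the decorating maps so that $\Sigma'' \circ \Sigma'$ really is $(\Sigma, \varphi' \cup_S \varphi'')$ up to \emph{canonical} $2$-isomorphism (so that the resulting isomorphism of vector spaces is independent of the choices), and fixing orientation conventions so that the outgoing boundary of $\Sigma'$ and the incoming boundary of $\Sigma''$ are identified compatibly with the gluing that produces $\Sigma$.
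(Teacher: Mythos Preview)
Your proof is correct and follows exactly the approach the paper indicates: the paper does not spell out a proof but merely remarks that the lemma ``follow[s] directly from the functoriality of $Z$ and \eqref{gluinglawbimodules}'', which is precisely what you do --- interpret the gluing as a composition of decorated $1$-morphisms, apply the weak functoriality of $Z$, and then invoke \eqref{gluinglawbimodules} with $\cat{U}=\cat{W}=\FinVect$ and $U=W=\mathbb{C}$. Your write-up simply makes these two steps explicit.
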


 A particularly important special case arises if $\Sigma$ is the 2-torus $\torus^2$. By the holonomy classification of flat bundles the groupoid of $G$-bundles over the torus is equivalent to the full subgroupoid of $\Com (G^n)//G\subset (G\times G)//G$ consisting of pairs of commuting group elements.

\begin{proposition}[]\label{satzmodfunctorontorus}
Let $Z: G\text{-}\Cob(3,2,1) \to \TwoVect$ be an extended $G$-equivariant topological field theory, then for all $g,h\in G$ with $gh=hg$
\begin{align} Z(\torus^2)(g,h) \cong \int^{X \in \cat{C}_h^Z} \Hom_{\cat{C}_h^Z}(g.X ,X) \end{align} by a canonical isomorphism of vector spaces. 
\end{proposition}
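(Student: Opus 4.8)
The plan is to cut $\torus^2$ along one of its generating circles, use the gluing law of Lemma~\ref{lemmagluingreps} for the two resulting pieces, and collapse the resulting coend by the co-Yoneda lemma; conceptually this amounts to recognizing $Z(\torus^2)(g,h)$ as the categorical trace in $\TwoVect$ of the equivalence $\phi_g\colon\cat{C}_h^Z\to\cat{C}_h^Z$ from \eqref{eqnactionofGbyequiv}.

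First I would fix a generating curve $\sphere^1\subset\torus^2$ carrying the holonomy $h$ and cut along it. The condition $gh=hg$ is exactly what is needed for the flat $G$-bundle with holonomies $(g,h)$ to exist, and it guarantees that the complement of the curve is diffeomorphic to the cylinder $\sphere^1\times[0,1]$, carrying holonomy $h$ around its core; $\torus^2$ with the bundle $(g,h)$ is recovered by regluing the two boundary circles with a gauge transformation by $g$ inserted. Keeping track of orientations, this exhibits $\torus^2$ decorated by $(g,h)$ in $G\text{-}\Cob(3,2,1)$ as the closure of the cylinder $C_{g,h}\colon(\sphere^1,h)\to(\sphere^1,h)$ carrying the homotopy representing $g$, i.e.\ as $\operatorname{ev}_{\sphere^1}\circ(C_{g,h}\sqcup\id)\circ\operatorname{coev}_{\sphere^1}$ with the appropriate bundle decoration. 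Since $Z$ is symmetric monoidal it carries the evaluation and coevaluation of $\sphere^1$ to those of $\cat{C}_h^Z$ up to coherent isomorphism, and $Z(C_{g,h})=\phi_g$ by construction (using $ghg^{-1}=h$), so $Z(\torus^2)(g,h)$ is the trace of $\phi_g$ in $\TwoVect$.

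To turn this into the stated coend I would run Lemma~\ref{lemmagluingreps} on the decomposition $\torus^2=\Sigma''\cup_S\Sigma'$ with $S=\sphere^1\sqcup\sphere^1$, where $\Sigma'\colon\emptyset\to S$ and $\Sigma''\colon S\to\emptyset$ are bent cylinders carrying holonomy $h$, the gauge transformation by $g$ being placed on the leg of $\Sigma''$ meeting the $\cat{C}_{h^{-1}}^Z$-factor. Here $Z(S,\xi)\simeq\cat{C}_h^Z\boxtimes\cat{C}_{h^{-1}}^Z$, the second factor appearing because the two ends of a bent cylinder carry opposite orientations (see page~\pageref{bentcylinderfig}); the object $Z(\Sigma',\varphi')\mathbb{C}$ is the Calabi--Yau coform $\int^{X\in\cat{C}_h^Z}X\boxtimes X^*$, with $X^*\in\cat{C}_{h^{-1}}^Z$ by Corollary~\ref{korczduality}; and $Z(\Sigma'',\varphi'')$ is computed exactly as in Example~\ref{exbentcylinder}, but with $\phi_g$ inserted, giving $Z(\Sigma'',\varphi'')(X\boxtimes W)\cong\Hom_{\cat{C}_1^Z}(I,(g.X^*)\otimes W)$. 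Feeding this into the gluing formula gives a coend over $\cat{C}_h^Z\boxtimes\cat{C}_{h^{-1}}^Z$; using that $\Hom_{\cat{C}_h^Z}(X,-)$ preserves the finite (semisimple) colimits defining coends, together with Fubini and co-Yoneda, collapses it against the coform to a coend over $\cat{C}_h^Z$ with integrand $\Hom_{\cat{C}_1^Z}(I,(g.X^*)\otimes X)$; finally $g.X^*\cong(g.X)^*$ (Corollary~\ref{korczduality}) and the hom-tensor adjunction $\Hom_{\cat{C}_h^Z}(A,B)\cong\Hom_{\cat{C}_1^Z}(I,A^*\otimes B)$ with $A=g.X$, $B=X$ rewrite the integrand as $\Hom_{\cat{C}_h^Z}(g.X,X)$, yielding the claim.

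The main obstacle is not any single hard step but the need to keep every convention mutually consistent: which boundary circle of the cut cylinder carries $h$ and which $h^{-1}$, on which leg of $\Sigma''$ the gauge transformation by $g$ sits, and the handedness of the coform — these jointly determine whether one ends up with $\Hom_{\cat{C}_h^Z}(g.X,X)$, with $\Hom_{\cat{C}_h^Z}(X,g.X)$, or with the same expression for $g^{-1}$. As a sanity check, specializing to $g=1$ the formula reduces to $\int^{X\in\cat{C}_h^Z}\Hom_{\cat{C}_h^Z}(X,X)\cong\mathbb{C}^{\#\{\text{simple objects of }\cat{C}_h^Z\}}$, which is the expected value of the genus-one partition function with holonomy $h$ in one direction and is consistent with Lemma~\ref{lemmanumberofsimples}.
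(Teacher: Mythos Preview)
Your approach is essentially the paper's: cut $\torus^2$ into two bent cylinders along parallel copies of the $h$-circle, apply Lemma~\ref{lemmagluingreps} together with Example~\ref{exbentcylinder}, and collapse the resulting double coend via Fubini and co-Yoneda; the paper merely places the gauge transformation $g$ on $\Sigma'$ rather than on $\Sigma''$ and reads off $\Hom_{Z(S,\xi)}(-,Z(\Sigma',\varphi')\mathbb{C})$ directly from Corollary~\ref{korbimodG} instead of writing out the coform explicitly. One slip to fix: in your intermediate expression $Z(\Sigma'',\varphi'')(X\boxtimes W)\cong\Hom_{\cat{C}_1^Z}(I,(g.X^*)\otimes W)$ the dual is spurious and $g$ sits on the wrong factor --- you declared that $g$ lies on the $\cat{C}_{h^{-1}}^Z$-leg, so for general inputs this should read $\Hom_{\cat{C}_1^Z}(I,X\otimes g.W)$; the dual only enters once you substitute the coform summand $X\boxtimes X^*$, after which your remaining manipulations go through.
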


\begin{proof}
We can cut the torus with bundle decoration $(g,h)$, i.e.\ with a $G$-bundle with holonomies $g$ and $h$, respectively, along the generators of the fundamental group, as indicated in the following picture:
\begin{center}
		\vspace*{0.5cm}
		\includegraphics[width=0.3\textwidth]{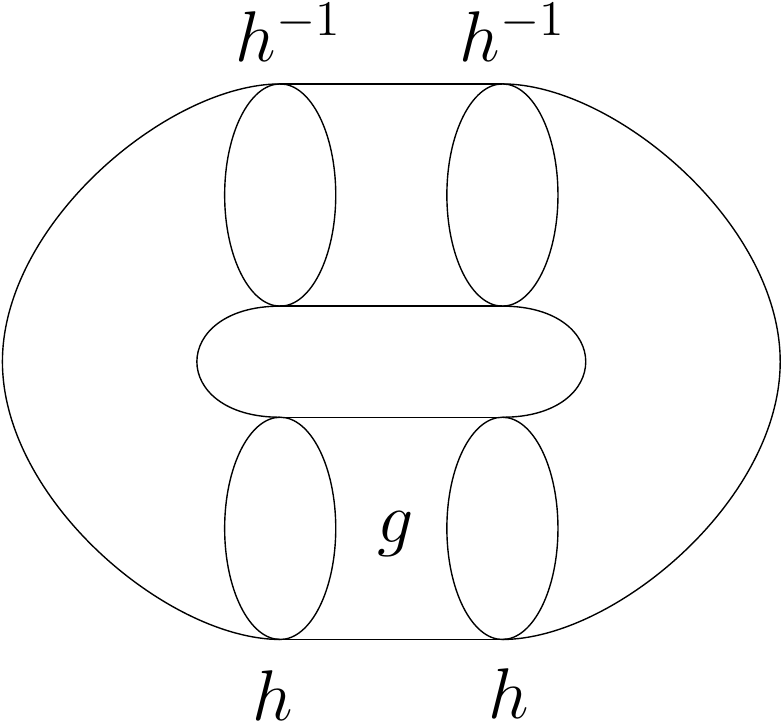}
		\vspace*{0.5cm}
	\end{center}
We want to apply Lemma~\ref{lemmagluingreps} with

\begin{itemize}

\item $(\Sigma'',\varphi'')$ given by the bent cylinder $B_h$ as described in Example~\ref{exbentcylinder} (that is the right third of the above picture),

\item $(\Sigma',\varphi')$ given by the same bent cylinder read backwards with two cylinders glued to it such that the lower leg is equipped with $g$ (that is the left and the middle third of the above picture glued together).

\end{itemize}
Hence, $(S,\xi)$ is given by two copies of the circle with $h$ and $h^{-1}$ on it.
By Example~\ref{exbentcylinder} we find for $X \in \cat{C}_h^Z$ and $Y \in \cat{C}_{h^{-1}}^Z$
\begin{align} Z(\Sigma'',\varphi'') (X \boxtimes Y) \cong \Hom_{\cat{C}_1^Z}(I,X \otimes Y) \end{align} and similarly (i.e.\ by means of Corollary~\ref{korbimodG}) 
\begin{align}
\Hom_{Z(S,\xi)} (X\boxtimes Y, Z(\Sigma',\varphi')\mathbb{C}) \cong \Hom_{\cat{C}_1^Z}(g.X \otimes Y,I)\ .
\end{align}
Now by applying Lemma~\ref{lemmagluingreps} we obtain
\begin{align}
Z(\torus^2,(g,h)) \cong \int^{X \boxtimes Y \in \cat{C}_h^Z \boxtimes \cat{C}_{h^{-1}}^Z} \Hom_{\cat{C}_1^Z}(I,X \otimes Y) \otimes \Hom_{\cat{C}_1^Z}(g.X \otimes Y,I)\ .
\end{align} By Fubini's Theorem for coends and $\Hom_{\cat{C}_1^Z}(I,X \otimes Y)\cong \Hom_{\cat{C}_h^Z}(Y^*,X)$ we find
\begin{align} Z(\torus^2,(g,h)) &\cong \int^{X \in \cat{C}_h^Z} \int^{Y \in \cat{C}_{h^{-1}}^Z} \Hom_{\cat{C}_h^Z}(Y^*,X) \otimes \Hom_{\cat{C}_1^Z}(g.X \otimes Y,I) \\ &= \int^{X \in \cat{C}_h^Z} \int^{Y \in \cat{C}_{h}^Z} \Hom_{\cat{C}_h^Z}(Y,X) \otimes \Hom_{\cat{C}_h^Z}(g.X ,Y) \ ,
\end{align} where in the last step we used the substitution $Y \mapsto Y^*$ and \begin{align}
\Hom_{\cat{C}_1^Z}(g.X \otimes Y,I) \cong \Hom_{\cat{C}_h^Z}(g.X ,Y^*)\ . \end{align} By the co-Yoneda Lemma (compare to Example~\ref{exbentcylinder}) we arrive at
\begin{align}
 Z(\torus^2,(g,h)) &\cong  \int^{X \in \cat{C}_h^Z} \Hom_{\cat{C}_h^Z}(g.X ,X)\ .     
\end{align}
\end{proof}

\begin{remark}
A map from the surface $\Sigma_g$ of genus $g$ to $BG$ can equivalently be described by a morphism $\varphi : \pi_1(\Sigma_g) \to G$ from the fundamental group of $\Sigma_g$ to $G$. We denote by $a_1,\dots,a_g,b_1,\dots,b_g$ usual generators of $\pi_1(\Sigma_g)$ subject to the relation $\prod_{j=1}^g [a_j,b_j] =1$. 
With similar methods, duality and the fact $\Hom_{\cat{C}_1^Z}(I,?)$ is exact and hence preserves finite colimits we find
\begin{align}
Z (\Sigma_g, \varphi ) \cong \Hom_{\cat{C}_1^Z}(I , L_\varphi)\ ,
\end{align} where $L_\varphi$ is the coend
\begin{align}
L_\varphi := \bigotimes_{j=1}^g L_\varphi^j, \quad L_\varphi^j := \int^{X_j \in \cat{C}_{\varphi(a_j)}^Z }  X_j \otimes \varphi(b_j).X_j^*\ .
\end{align}
These formulae can be found in \cite[VII.3.3]{turaevhqft}, where they are used as a definition to build a $G$-modular functor from an appropriate type of $G$-category. 
Above we have followed the converse logic and started with a given extended $G$-equivariant topological field theory, extracted this category and the corresponding modular functor and derived these formulae. 
\end{remark}

If we denote by $P$ the pair of pants, then evaluation of $Z$ on the bordism $\sphere^1\times P : \torus^2 \coprod \torus^2 \to \torus^2$ appropriately decorated with $G$-bundles yields linear maps
\begin{align}
Z(\torus^2)(g,h) \otimes Z(\torus^2)(g,h') \to Z(\torus^2)(g,hh') \myforall g,h,h' \in G\ ,
\end{align} which extends by zero to an associative multiplication on the total space \begin{align} \bigoplus_{\substack{g,h\in G\\ gh=hg}} Z(\torus^2)(g,h) \cong \bigoplus_{\substack{g,h\in G\\ gh=hg}} \int^{X \in \cat{C}_h^Z} \Hom_{\cat{C}_h^Z}(g.X ,X) \label{eqeqverlindealgg}\ .\end{align} The vector space \eqref{eqeqverlindealgg} together with this multiplication is called the \emph{equivariant Verlinde algebra of $Z$}. It helps to prove the following:

\spaceplease 

\begin{proposition}[]\label{satztwistedsectorsnontrivial}
Let $G$ be a finite group and $Z: G\text{-}\Cob(3,2,1) \to \TwoVect$ an extended $G$-equivariant topological field theory such that the monoidal unit of $\cat{C}^Z$ is simple. Then all twisted sectors $\cat{C}_g^Z$ for $g\in G$ are non-trivial, i.e.\ different from the zero 2-vector space. \end{proposition}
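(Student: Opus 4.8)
The plan is to use the equivariant Verlinde algebra \eqref{eqeqverlindealgg} as a trace-like device and exploit the fact that its component for $h = 1$ detects whether a sector is zero. The starting point is Proposition~\ref{satzmodfunctorontorus}: for commuting $g,h\in G$ we have $Z(\torus^2)(g,h)\cong \int^{X\in\cat{C}_h^Z}\Hom_{\cat{C}_h^Z}(g.X,X)$. Specializing to $h=1$ gives $Z(\torus^2)(g,1)\cong \int^{X\in\cat{C}_1^Z}\Hom_{\cat{C}_1^Z}(g.X,X)$, and specializing further to $g=1$ gives $Z(\torus^2)(1,1)\cong \int^{X\in\cat{C}_1^Z}\Hom_{\cat{C}_1^Z}(X,X)$, which by the usual coend computation in a finitely semisimple category is the vector space spanned by (isomorphism classes of) simple objects of $\cat{C}_1^Z$; since the unit $I$ is simple this is at least one-dimensional.

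First I would observe that for \emph{each} fixed $g\in G$, the subspace $Z(\torus^2)(g,1)$ is a module (in fact a unital subalgebra-like piece) over the neutral-sector Verlinde algebra $Z(\torus^2)(1,1)$ under the multiplication coming from $\sphere^1\times P$, and more importantly that multiplication along the cylinder-with-twist relating the sector pair $(g,h)$ and $(g,h^{-1})$ pairs $Z(\torus^2)(g,h)$ with $Z(\torus^2)(g,h^{-1})$ nondegenerately into $Z(\torus^2)(g,1)$. Concretely, evaluating $Z$ on $\sphere^1\times P:\torus^2\sqcup\torus^2\to\torus^2$ decorated so that the two ingoing tori carry $(g,h)$ and $(g,h^{-1})$ and the outgoing torus carries $(g,1)$, and then capping off the outgoing torus (or rather tracing), produces a nondegenerate pairing — this is the equivariant analogue of the fact that in a modular category the $S$-matrix, equivalently the pairing on the Verlinde algebra given by the Hopf link, is nondegenerate. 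Nondegeneracy of this pairing forces $Z(\torus^2)(g,h)\ne 0$ for all commuting pairs, in particular $Z(\torus^2)(g,g^{-1})\ne 0$.

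The key deduction is then: if some twisted sector $\cat{C}_g^Z$ were the zero $2$-vector space, then $\cat{C}_{g^{-1}}^Z$ would be zero as well (the $G$-action gives an equivalence $\cat{C}_g^Z\simeq \cat{C}_{g g g^{-1}}^Z$, and duality sends $\cat{C}_g^Z$ to $\cat{C}_{g^{-1}}^Z$ by Corollary~\ref{korczduality}, so actually zeroness of $\cat{C}_g^Z$ is equivalent to zeroness of $\cat{C}_{g^{-1}}^Z$). But then, reading Proposition~\ref{satzmodfunctorontorus} with $h=g^{-1}$, we get $Z(\torus^2)(g,g^{-1})\cong \int^{X\in\cat{C}_{g^{-1}}^Z}\Hom_{\cat{C}_{g^{-1}}^Z}(g.X,X)=0$ since the coend is taken over the zero category. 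This contradicts the nondegeneracy established above, which guarantees $Z(\torus^2)(g,g^{-1})\ne 0$. Hence no twisted sector can vanish.

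I expect the main obstacle to be establishing the nondegeneracy of the relevant pairing on the equivariant Verlinde algebra with the precise index bookkeeping — i.e.\ constructing, from the bordism $\sphere^1\times P$ and its cap, a copairing in the opposite direction and verifying the zig-zag identity, so that $\bigoplus_{gh=hg}Z(\torus^2)(g,h)$ becomes a (not necessarily symmetric, but) nondegenerate Frobenius-type structure whose restriction to each fixed-$g$ block $\bigoplus_{h\in C_G(g)}Z(\torus^2)(g,h)$ is itself nondegenerate. This is the equivariant counterpart of the argument in \cite{BDSPV153D} showing modularity of the category on the circle; here we do not need full non-degeneracy, only that the block pairing $Z(\torus^2)(g,h)\otimes Z(\torus^2)(g,h^{-1})\to\mathbb{C}$ (coming from gluing the two bent cylinders of Example~\ref{exbentcylinder} around a $(g,h)$-decorated torus into a closed $3$-manifold) is nondegenerate, which follows from the snake identities for the duality $\cat{C}_h^Z\simeq(\cat{C}_{h^{-1}}^Z)^{\text{opp}}$ combined with functoriality of $Z$. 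Everything else is a routine application of Proposition~\ref{satzmodfunctorontorus}, the co-Yoneda lemma, and the semisimplicity of $\cat{C}_1^Z$.
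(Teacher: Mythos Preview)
Your argument has a genuine gap at the step ``Nondegeneracy of this pairing forces $Z(\torus^2)(g,h)\ne 0$ for all commuting pairs.'' A nondegenerate pairing $V\otimes W\to\mathbb{C}$ only asserts $V\cong W^*$; it is perfectly consistent with $V=W=0$. So when $\cat{C}_{g^{-1}}^Z=0$ and hence $Z(\torus^2)(g,g^{-1})=0$, nondegeneracy of the pairing with $Z(\torus^2)(g,g)$ merely forces $Z(\torus^2)(g,g)=0$ as well --- no contradiction. More generally, your Frobenius picture for the $g$-block $\bigoplus_{h}Z(\torus^2)(g,h)$ does not rule out the entire block being zero, and even if you separately establish $Z(\torus^2)(g,1)\ne 0$, nondegeneracy of the block pairing still says nothing about the individual summands $Z(\torus^2)(g,h)$ for $h\ne 1$. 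The pairing stays inside a fixed $g$-block and never transports information from $\cat{C}_1^Z$ to $\cat{C}_g^Z$.

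The missing idea is the modular $S$-transformation, which does exactly this transport. The element $S$ of the mapping class group of $\torus^2$ pulls the bundle $(g,h)$ back to $(h^{-1},g)$, so evaluating $Z$ on the associated invertible $2$-cell (Remark~\ref{bmkextcob},~\ref{bmkextcobdiffeo}) gives an isomorphism $Z(\torus^2)(g,1)\cong Z(\torus^2)(1,g)$. If $\cat{C}_g^Z=0$ then $Z(\torus^2)(1,g)=\int^{X\in\cat{C}_g^Z}\Hom_{\cat{C}_g^Z}(X,X)=0$ by Proposition~\ref{satzmodfunctorontorus}. On the other hand you already wrote down $Z(\torus^2)(g,1)\cong\int^{X\in\cat{C}_1^Z}\Hom_{\cat{C}_1^Z}(g.X,X)$, and since $I$ is simple and $g.I\cong I$ (because $g$ acts as a monoidal functor, Proposition~\ref{satzequivariantmonoidalstructure}), the summand for $X=I$ contributes $\Hom_{\cat{C}_1^Z}(I,I)\cong\mathbb{C}$, whence $Z(\torus^2)(g,1)\ne 0$. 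The $S$-isomorphism then gives the contradiction. You had this coend computation in your first paragraph but only exploited it for $g=1$; the paper's proof is precisely to run it for general $g$ and feed in the $S$-move.
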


\begin{proof}
It is well-known that the mapping class group of the torus has an element $S:\torus^2 \to \torus^2$ such that the bundle $(g,h)$ is pulled back along $S$ to the bundle $(h^{-1},g)$. Hence, the evaluation of $Z$ on the invertible 2-morphism in $G\text{-}\Cob(3,2,1)$ built from $S$ (Remark~\ref{bmkextcob}, \ref{bmkextcobdiffeo}) yields an isomorphism $Z(\torus^2)(g,h) \cong Z(\torus^2)(h^{-1},g)$ for $g,h\in G$; in particular 
\begin{align}
Z(\torus^2)(g,1) \cong Z(\torus^2)(1,g) \myforall g\in G\ . \label{eqncompareveontorus}
\end{align} Suppose now $\cat{C}_g^Z=0$ for some $g\neq 1$. Then $Z(\torus^2)(1,g)=0$ by Proposition~\ref{satzmodfunctorontorus} and hence $Z(\torus^2)(g,1)=0$ by \eqref{eqncompareveontorus}. On the other hand, if we complete the unit $I \in \cat{C}_1^Z$ to a basis $(I,(B_j)_{j\in J})$ of simple objects for $\cat{C}_1^Z$, we find by Proposition~\ref{satzmodfunctorontorus}
\begin{align}
Z(\torus^2)(g,1) \cong \Hom_{\cat{C}_1^Z} (g.I,I) \oplus \bigoplus_{j\in J} \Hom_{\cat{C}_1^Z} (g.B_j,B_j)\ .
\end{align} By Proposition~\ref{satzequivariantmonoidalstructure} the element $g$ acts as a monoidal functor, so $\Hom_{\cat{C}_1^Z} (g.I,I) \cong \Hom_{\cat{C}_1^Z} (I,I)\cong \mathbb{C}$ leading to $Z(\torus^2)(g,1) \neq 0$ and hence to a contradiction.     
\end{proof}

\begin{example}[]\label{exequivcatnonsplit}
	The statement of Proposition~\ref{satztwistedsectorsnontrivial} is false if we do not assume the simplicity of the monoidal unit: Let $Z: \Cob(3,2,1) \to \TwoVect$ be a non-equivariant extended topological field theory such that the unit of $\cat{C}^Z := Z(\sphere^1)$ is simple. Then by \cite{BDSPV153D} the category $\cat{C}^Z$ is modular. If we push $Z$ along the group morphism $\iota : \{1\} \to G$ for some finite group $G$ using the pushforward construction of Section~\ref{secpushofk}, we obtain a $G$-equivariant topological field theory $\iota_* Z$. Evaluation of $\iota_* Z$ on the circle yields the category $\cat{C}^{\iota_* Z}$ with trivial twisted sectors and neutral sector $\cat{C}_1^{\iota_* Z} = \bigoplus_{g\in G} \cat{C}^Z$. The action by $h\in G$ sends the copy for $g$ to the copy for $hg$. If we denote by $I^g$ the unit $I$ of $\cat{C}^Z$ in the copy for $g\in G$, then the unit of $\cat{C}^{\iota_* Z}$ is given by $J = \bigoplus_{g\in G} I^g$, so it is not simple for $|G|\ge 2$. As a semisimple braided monoidal category, $\cat{C}^{\iota_* Z}$ decomposes into semisimple braided monoidal categories with simple unit, see \cite[Lemma~5.3]{BDSPV153D}, but this decomposition is not preserved by the $G$-action. 
	
	The twisted sectors of $\cat{C}^{\iota_* Z}$ are allowed to be trivial because the argument given in the proof of Proposition~\ref{satztwistedsectorsnontrivial} fails. More precisely, in contrast to the proof we find $Z(\torus^2)(g,1)=0$ for $g \neq 1$ because $\cat{C}^{\iota_* Z}$ has no simple objects invariant (up to isomorphism) under $g$. 
	\end{example}

	 We have seen in Proposition~\ref{satztwistedsectorsnontrivial} that it is important to know whether the unit of the equivariant monoidal category coming from an equivariant topological field theory is simple. The situation is under control for those theories arising from our pushforward construction:

	\begin{proposition}[]
		Let $\lambda : G \to H$ be a morphism of finite groups and $Z: G\text{-}\Cob(3,2,1) \to \TwoVect$ an extended $G$-equivariant topological field theory such that the monoidal unit $I\in\cat{C}^Z$ is simple. The monoidal unit in the category $\cat{C}^{\lambda_* Z}$ associated to the pushforward \begin{align}\lambda_* Z: H\text{-}\Cob(3,2,1) \to \TwoVect\end{align} of $Z$ along $\lambda$ in the sense of Definition~\ref{defpushforward} has the endomorphism space $\mathbb{C}^{|H/\img \lambda|}$. In particular, the unit of $\cat{C}^{\lambda_* Z}$ is simple if and only if $\lambda$ is surjective. 
	\end{proposition}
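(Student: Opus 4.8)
The plan is to identify the endomorphism algebra of the monoidal unit of $\cat{C}^{\lambda_* Z}$ with the vector space that $\lambda_* Z$ assigns to the $2$-sphere, and then to compute the latter through the pull-push that defines the pushforward. For the first step I record a general fact: for any $3$-$2$-$1$-dimensional $H$-equivariant topological field theory $W:H\text{-}\Cob(3,2,1)\to\TwoVect$ the monoidal unit $I^W=\eta^W(\mathbb{C})\in\cat{C}^W_1$ of Proposition~\ref{satzequivariantmonoidalstructure} satisfies $\End_{\cat{C}^W_1}(I^W)\cong W(\sphere^2,\mathrm{triv})$ as algebras. Indeed, writing $\eta^W,\varepsilon^W$ for the evaluation of $W$ on the disk $D^2:\emptyset\to\sphere^1$ and on $D^2:\sphere^1\to\emptyset$ (each carrying the trivial bundle, the only one since $D^2$ is contractible), the analogue of \eqref{eqnunitcounitadj2vect} for $W$ gives $\eta^W\dashv\varepsilon^W$; gluing the two disks along $\sphere^1$ produces $\sphere^2$ with the trivial $H$-bundle — the only one, as $\sphere^2$ is simply connected — so $\varepsilon^W\circ\eta^W=W(\sphere^2,\mathrm{triv})$ and
\begin{align}
\End_{\cat{C}^W_1}(I^W)\ \cong\ \Hom_{\FinVect}\!\left(\mathbb{C},\varepsilon^W\eta^W\mathbb{C}\right)\ =\ W(\sphere^2,\mathrm{triv}).
\end{align}
In particular $I^W$ is simple iff $\dim W(\sphere^2,\mathrm{triv})=1$. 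I apply this with $W=\lambda_* Z$.

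Next I compute $\lambda_* Z(\sphere^2,\mathrm{triv})$. Since $\lambda_* Z=\Par_*\circ\widehat{?}^\lambda$, I use that $\widehat{Z}^\lambda$ on the $1$-morphism $\sphere^2:(\emptyset,\emptyset)\to(\emptyset,\emptyset)$ is the pushforward analogue of the change-to-equivariant-coefficients functor of Theorem~\ref{thmhatconstruction}: it is the span $\star\leftarrow F\to\star$ whose middle $F$ is the homotopy fiber of $\lambda_*:\Pi(\sphere^2,BG)\to\Pi(\sphere^2,BH)$ over the trivial bundle, equipped with the pullback to $F$ of the vector bundle $Z(\sphere^2,?):\Pi(\sphere^2,BG)\to\FinVect$. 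By Lemma~\ref{lemmamappingspacebundles}, $\Pi(\sphere^2,BG)\simeq[\,\star,\star{//}G\,]\simeq\star{//}G$ and likewise for $H$, with $\lambda_*$ induced by $\lambda$; its homotopy fiber is the groupoid with object set $H$ and a morphism $\beta\to\beta\lambda(k)^{-1}$ for each $k\in G$, so $\pi_0 F\cong H/\img\lambda$ and every object has automorphism group $N:=\ker\lambda$, i.e.\ $F\simeq\coprod_{[c]\in H/\img\lambda}\star{//}N$. The pullback of $Z(\sphere^2,?)$ along $F\to\star{//}G$ is, on each component, the vector space $V:=Z(\sphere^2,\mathrm{triv})$ with the restriction to $N$ of its $G$-action, and $\Par_*$ turns the span into the pull-push of Proposition~\ref{satzorbifoldconcrete}, giving
\begin{align}
\lambda_* Z(\sphere^2,\mathrm{triv})\ \cong\ \prod_{[c]\in H/\img\lambda}V^{N}.
\end{align}

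Finally I identify $V$. Applying the reduction above to $Z$ itself yields $V\cong\End_{\cat{C}^Z_1}(I)$, and $G$-equivariantly so (both sides carry the $G$-action induced by cylinders decorated with homotopies, and the isomorphism is built from the same decomposition $\sphere^2=D^2\cup_{\sphere^1}D^2$). As $I$ is simple, $\End_{\cat{C}^Z_1}(I)=\mathbb{C}\,\id_I$, on which $G$ acts trivially since each $\phi_g$ is a $\mathbb{C}$-linear monoidal functor fixing $I$ up to coherent isomorphism and therefore sends $\lambda\,\id_I$ to $\lambda\,\id_I$. Hence $V^N=\mathbb{C}$, so by the first step $\End_{\cat{C}^{\lambda_* Z}_1}(I^{\lambda_* Z})\cong\mathbb{C}^{|H/\img\lambda|}$ as an algebra; and since in a complex finitely semisimple category an object is simple precisely when its endomorphism algebra is $\mathbb{C}$, the unit of $\cat{C}^{\lambda_* Z}$ is simple iff $|H/\img\lambda|=1$, i.e.\ iff $\lambda$ is surjective.

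The main obstacle is that $\widehat{?}^\lambda$ is only sketched in the text, so one must verify that on a $1$-morphism it is assembled exactly as in Theorem~\ref{thmhatconstruction} but with $\Pi(-,BG)$ replaced by the homotopy fiber of $\lambda_*$, and that $\Par_*$ then realises the pull-push of Proposition~\ref{satzorbifoldconcrete}; granting this ``straightforward generalisation'' promised after Definition~\ref{defpushforward}, the remaining work — the homotopy-fiber computation and the one-dimensionality argument — is routine. A secondary, minor point is the $G$-equivariance of the isomorphism $V\cong\End_{\cat{C}^Z_1}(I)$, which is checked by slicing the mapping cylinder that implements a gauge transformation of the trivial bundle over $\sphere^2$ along the two disks.
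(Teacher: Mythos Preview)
Your argument is correct and takes a genuinely different route from the paper. The paper works at the level of the circle and the disk: it identifies $\cat{C}^{\lambda_* Z}_1$ as parallel sections of the pullback of $\cat{C}^Z$ over the homotopy fiber $(\ker\lambda\times H)//G$ of $\lambda_*:G//G\to H//H$ above $1$, then computes the unit $J$ explicitly as the parallel section supported on $\{1\}\times H$ with constant value $I$, and reads off $\End(J)\cong\mathbb{C}^{|\pi_0((\{1\}\times H)//G)|}=\mathbb{C}^{|H/\img\lambda|}$. You instead bypass any description of the unit object by invoking the adjunction $\eta\dashv\varepsilon$ to identify $\End(I^W)\cong W(\sphere^2,\mathrm{triv})$ for any theory $W$, and then compute $\lambda_* Z(\sphere^2,\mathrm{triv})$ directly as a limit over the homotopy fiber of $\lambda_*:\star//G\to\star//H$, which is the simpler groupoid $\coprod_{H/\img\lambda}\star//\ker\lambda$.

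What each approach buys: the paper's argument yields an explicit description of the unit $J\in\cat{C}^{\lambda_* Z}_1$ as a parallel section, which is additional information beyond the statement. Your argument is more economical for the stated goal, since evaluating on a closed surface reduces the pushforward to a single invariants computation and avoids unpacking the 2-vector space $\cat{C}^{\lambda_* Z}_1$ at all; it also makes transparent why the answer only depends on $Z(\sphere^2,\mathrm{triv})=\End_{\cat{C}^Z_1}(I)$ and on $\pi_0$ of the fiber. Two minor remarks: your claim that the isomorphism $\End(I^W)\cong W(\sphere^2,\mathrm{triv})$ holds ``as algebras'' is not needed for the proposition (only the dimension matters), and would require a further check of how the adjunction transports composition; and your $G$-equivariance argument for $V\cong\End_{\cat{C}^Z_1}(I)$ is in fact unnecessary once you observe $V\cong\mathbb{C}$, since any linear $G$-action on a one-dimensional space restricts to a one-dimensional $N$-invariant subspace only when it is trivial on $N$, which here follows already from $\phi_g$ being $\mathbb{C}$-linear and unital.
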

	
	\begin{proof}
		The group morphism $\lambda$ induces a functor $\lambda_* : G//G \to H//H$ for the groupoids of $G$-bundles and $H$-bundles over the circle, respectively. 
		An easy computation shows that the homotopy fiber over $1 \in H$ is given by $(\ker \lambda \times H) //G$, where $G$ acts on $\ker \lambda \times H$ by
		\begin{align} a.(g,h) = \left(aga^{-1},h\lambda\left(a^{-1}\right)\right) \myforall a \in G, \quad g \in \ker \lambda, \quad \quad h \in H.\end{align}
		By the definition of the pushforward, $\cat{C}^{\lambda_* Z}_1$ is the 2-vector space of parallel sections of the 2-vector bundle obtained by pullback of $\cat{C}^Z : G//G \to \TwoVect$ along the projection $(\ker \lambda \times H) //G \to G//G$. The evaluation of $\lambda _* Z$ on the disk decorated with the trivial $H$-bundle yields a map $\FinVect \to \cat{C}^{\lambda_* Z}_1$ whose image on $\mathbb{C}$ is the monoidal unit  $J$ of $\cat{C}^{\lambda_* Z}$. 
		Again, by the definition of the pushforward, this map $\FinVect \to \cat{C}^{\lambda_* Z}_1$ and its image on $\mathbb{C}$ are computed as follows: The morphism $\lambda$ induces the functor $\star //G \to \star //H$ for the $G$-bundles and $H$-bundles over the disk, respectively. Its homotopy fiber over $\star$ is given by $(\{1\}\times H) //G$. By restriction to the boundary, this groupoid embeds into the homotopy fiber $(\ker \lambda \times H) //G$ that we computed for the circle. Denote by $\iota : (\{1\}\times H) //G \to (\ker \lambda \times H) //G$ the embedding. Now the monoidal unit $J \in \cat{C}^{\lambda_* Z}_1$ is the parallel section given on $(g,h) \in \ker \lambda \times H$ by
		\begin{align} J(g,h) = \lim_{\iota^{-1}[g,h]} I.\end{align}
	This parallel section  is supported on $\{1\} \times H$, where it has constant value $I$. Since $H$ acts on $\cat{C}^{\lambda_* Z}$ by linear functors, we see that the endomorphism space of $J$ of is given by $\mathbb{C}^{|    \pi_0 ((\{1\} \times H) //G )|}=\mathbb{C}^{|H/\img \lambda|}$.      
	\end{proof}

The right hand side of \eqref{eqeqverlindealgg} makes sense for any $G$-ribbon category (regardless of whether it comes from an equivariant topological field theory) and inspires the following definition:

\begin{definition}[Equivariant modularity, after \cite{kirrilovg04}]\label{defmultimodular}
Let $G$ be a finite group and $\cat{C}$ a complex finitely semisimple $G$-equivariant ribbon category. We define as in \cite[Section~8]{kirrilovg04} 
		\begin{align}
		\widetilde{\cat{V}}  (\cat{C}) _{g,h} := \int^{X \in \cat{C}_h} \Hom_{\cat{C}_h}(g.X ,X)
		\end{align} and (using this)
		 the
 \emph{equivariant Verlinde algebra} 
	\begin{align}
	\widetilde{\cat{V}}  (\cat{C}) := \bigoplus_{  \substack{ g,h\in G \\ gh=hg  }  } 	\widetilde{\cat{V}}  (\cat{C}) _{g,h}\ .
	\end{align} 
	For $g,h \in G$ with $gh=hg$, $X\in \cat{C}_h$, $Y \in \cat{C}_g$ and a morphism $\varphi : g.X \to X$ we define the morphism $\widetilde s (\varphi) : Y \to h.Y$ as
		\begin{center}
		\vspace*{0.5cm}
		\includegraphics[width=0.25\textwidth]{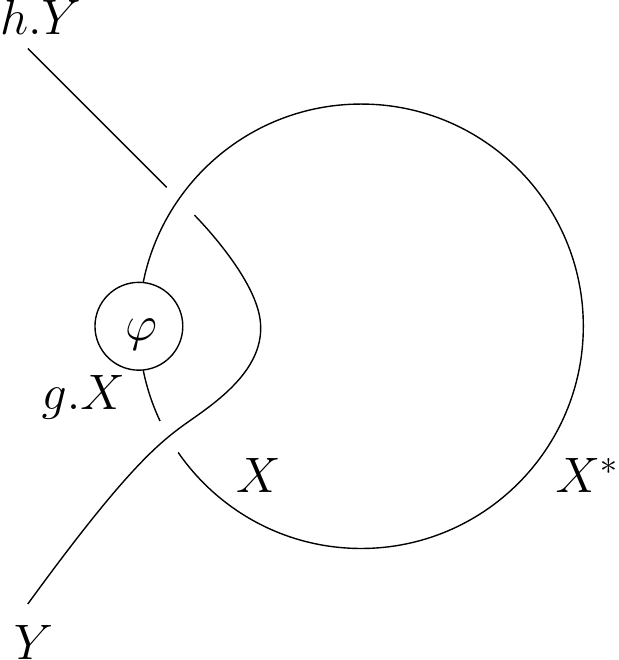}
		\vspace*{0.5cm} .
	\end{center}
This assignment induces a linear map
\begin{align}
\widetilde s : \widetilde{\cat{V}}  (\cat{C}) _{g,h} = \int^{X \in \cat{C}_h} \Hom_{\cat{C}_h}(g.X ,X) \to \int^{Y \in \cat{C}_g} \Hom_{\cat{C}_g}(Y ,h.Y) \cong \widetilde{\cat{V}}  (\cat{C}) _{h^{-1},g}\ .
\end{align} 
We denote the induced map $\widetilde{\cat{V}}  (\cat{C}) \to \widetilde{\cat{V}}  (\cat{C})$ also by $\widetilde s$.
We call the complex finitely semisimple $G$-equivariant ribbon category $\cat{C}$ a \emph{$G$-multimodular category} if the map $\widetilde s : \widetilde{\cat{V}}  (\cat{C}) \to \widetilde{\cat{V}}  (\cat{C})$ is invertible.
A \emph{$G$-modular category} is a $G$-multimodular category with simple monoidal unit.
\end{definition}

\begin{remarks}\label{bmkdefequivmod}
\item The name equivariant Verlinde \emph{algebra} is also justified in the purely algebraic case because $\widetilde{\cat{V}}  (\cat{C})$ comes with a multiplication, see \cite[Section~8]{kirrilovg04}, which is in accordance with the multiplication provided by Proposition~\ref{satzmodfunctorontorus} in the case where our category comes from a topological field theory.

\item A $\{1\}$-multimodular category is just a modular category without the requirement that the unit is simple. However by \cite[Lemma~5.3]{BDSPV153D}, such a category decomposes into a sum of modular categories. For $G \neq \{1\}$ such a decomposition need not be possible, see Example~\ref{exequivcatnonsplit}, so the simplicity of the unit is an important requirement for equivariant categories. 

\item In \cite{hrt} a $G$-modular category is defined to be a  complex finitely semisimple $G$-equivariant ribbon category with simple unit such that the twisted sectors are non-trivial and the neutral sector is modular. This notion of $G$-modularity turns out to be equivalent to the one defined above as follows from a result by Müger in \cite[Appendix~5, Theorem~4.1 (ii)]{turaevhqft}, see also \cite{mueger}, and the characterization of $G$-modularity as defined above in terms of the orbifold theory given in \cite{kirrilovg04} and recalled as Theorem~\ref{thmalgebraicorbifoldization} below.\label{bmkdefequivmod3}

\end{remarks}

 Now we can prove the main results of this subsection, namely the equivariant modularity of the category $\cat{C}^Z$ that a 3-2-1-dimensional $G$-equivariant topological field theory $Z$ yields on the circle. We will have two versions of the result depending on whether the unit in $\cat{C}^Z$ is simple. The proofs will be totally independent.

If the unit of $\cat{C}^Z$ is simple, then we will prove that $\cat{C}^Z$ is $G$-modular. The method of proof demonstrates that the geometric orbifold construction provides a link between the purely algebraic understanding of equivariant modular categories in \cite{kirrilovg04} to the topological results of \cite{BDSPV153D}. To this end, we use that the notion of equivariant modularity is completely governed by the following strong algebraic result from \cite{kirrilovg04} that we slightly rephrase:

\begin{theorem}[\text{\cite[Theorem~10.5]{kirrilovg04}}]\label{thmalgebraicorbifoldization}
	Let $G$ be a finite group. For any complex finitely semisimple $G$-equivariant ribbon category $\cat{C}$ the orbifold category $\cat{C}/G$ naturally inherits by Proposition~\ref{satzalgofk} the structure of a complex finitely semisimple ribbon category and
	\begin{align}
		\cat{C} \ \text{is $G$-modular} \quad \Longleftrightarrow \quad \cat{C}/G \ \text{is modular}.
	\end{align}
\end{theorem}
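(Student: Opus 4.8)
The plan is to obtain this as a restatement of \cite[Theorem~10.5]{kirrilovg04}: the actual content of the proof is a translation of our conventions into Kirillov's. First I would match the input data, checking that a complex finitely semisimple $G$-equivariant ribbon category in our sense is exactly the kind of object Kirillov works with (a $G$-equivariant fusion category with ribbon structure), once one fixes the convention for left versus right duals and the orientation of the braiding. Next I would identify the orbifold category $\cat{C}/G$ of Proposition~\ref{satzalgofk} --- objects of $\cat{C}$ together with a coherent family $\chi_g\colon g.X\to X$, equipped with the monoidal, duality, braiding and twist structures described there --- with Kirillov's orbifold category (the equivariantization $\cat{C}^G$) together with its canonical ribbon structure. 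This is a direct unwinding of definitions; in particular $\cat{C}/G$ is automatically complex finitely semisimple with simple unit as soon as $\cat{C}$ has simple unit, so that both sides of the claimed equivalence make sense.

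The substantive point is to check that $G$-modularity as defined in Definition~\ref{defmultimodular} coincides with Kirillov's non-degeneracy condition. Here, for each commuting pair $g,h\in G$, I would use the co-Yoneda Lemma together with semisimplicity of $\cat{C}_h$ to identify the summand $\widetilde{\cat{V}}(\cat{C})_{g,h}=\int^{X\in\cat{C}_h}\Hom_{\cat{C}_h}(g.X,X)$ of the equivariant Verlinde algebra with the vector space spanned by those simple objects of $\cat{C}_h$ that are isomorphic to their image under the equivalence $X\mapsto g.X$, each counted with multiplicity $\dim\Hom_{\cat{C}_h}(g.X,X)$; this is precisely the space on which Kirillov's equivariant $S$-operator acts. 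I would then verify, in the graphical calculus, that the map $\widetilde s$ built from the doubled-braiding-with-test-object picture of Definition~\ref{defmultimodular} agrees, up to the usual normalization by global dimensions, with Kirillov's equivariant $S$-matrix; invertibility of $\widetilde s$ is thus equivalent to Kirillov's non-degeneracy, and ``$\cat{C}/G$ is modular'' means exactly invertibility of the $S$-matrix of $\cat{C}/G$. With this dictionary in place the asserted equivalence is literally \cite[Theorem~10.5]{kirrilovg04}.

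The main obstacle I anticipate is exactly this bookkeeping: pinning down that the picture in Definition~\ref{defmultimodular} reproduces Kirillov's equivariant $S$-matrix with the correct direction and normalization, confirming that our left-duality convention does not silently introduce a transpose, and matching the semisimplicity and simple-unit hypotheses on the two sides. None of this is conceptually deep, but it is the only place where care is needed. As a sanity check one may compare with M\"uger's characterization of modularity of equivariantizations (\cite{mueger}; see also the appendix to \cite{turaevhqft}), which yields the same equivalence and is the ingredient used for the related comparison recorded in Remark~\ref{bmkdefequivmod}, \ref{bmkdefequivmod3}.
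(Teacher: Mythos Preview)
Your proposal is correct and matches the paper's treatment: the paper does not prove this theorem at all but simply quotes it as \cite[Theorem~10.5]{kirrilovg04}, introduced as ``the following strong algebraic result from \cite{kirrilovg04} that we slightly rephrase''. Your outline of the dictionary between the conventions here and Kirillov's is therefore already more than the paper supplies, and is exactly the right thing to do if one wants to justify the rephrasing.
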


\begin{theorem}[]\label{thmgmodcatoncircle}
	Let $G$ be a finite group.	For any extended $G$-equivariant topological field theory $Z $ the category $\cat{C}^Z$ obtained by evaluation on the circle  is 
	\begin{myenumerate}
	\item 
	$G$-modular if its monoidal unit is simple,\label{thmgmodcatoncirclea}
	
	\item
	and in the general case still $G$-multimodular.\label{thmgmodcatoncircleb}
	\end{myenumerate}
\end{theorem}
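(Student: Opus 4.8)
The plan is to establish parts~\ref{thmgmodcatoncirclea} and \ref{thmgmodcatoncircleb} by two completely independent arguments, since the simplicity of the unit is used in an essential way in the first but is unavailable in the second. The first will proceed through the orbifold construction together with Kirillov's characterisation of equivariant modularity, while the second will be purely geometric, using the action of the mapping class group of the two-torus on the vector spaces $Z$ assigns to it.

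For part~\ref{thmgmodcatoncirclea} I would argue as follows. Recall first that $\cat{C}^Z$ is a complex finitely semisimple $G$-ribbon category by Propositions~\ref{satzequivariantmonoidalstructure}, \ref{satzczduals}, \ref{satzequivbraiding} and \ref{satzgribbonkat}, so the notion of $G$-modularity from Definition~\ref{defmultimodular} applies. The orbifold theory $Z/G : \Cob(3,2,1) \to \TwoVect$ of Definition~\ref{defofkext} is a genuine (non-equivariant) extended topological field theory, so by \cite{BDSPV153D} its value $Z/G(\sphere^1)$ on the circle carries the structure of a complex finitely semisimple ribbon category, which is modular in the strict sense as soon as its monoidal unit is simple. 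By Proposition~\ref{satzdescriptorbtensorstructure}, \ref{satzdescriptorbtensorstructurea} the unit of $Z/G(\sphere^1)$ is simple whenever the unit of $\cat{C}^Z$ is; hence under the hypothesis of \ref{thmgmodcatoncirclea} the category $Z/G(\sphere^1)$ is modular. By Theorem~\ref{thmorbifoldtheorymodular} there is an equivalence $Z/G(\sphere^1) \cong \cat{C}^Z/G$ of complex finitely semisimple ribbon categories, so $\cat{C}^Z/G$ is modular, and Kirillov's Theorem~\ref{thmalgebraicorbifoldization} then yields that $\cat{C}^Z$ is $G$-modular.

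For part~\ref{thmgmodcatoncircleb} I would proceed geometrically, avoiding any appeal to the orbifold construction; note that a reduction to \ref{thmgmodcatoncirclea} via a decomposition of $\cat{C}^Z$ into pieces with simple unit is \emph{not} available, since the $G$-action need not respect such a decomposition (cf.\ Example~\ref{exequivcatnonsplit}). The first step is to identify the equivariant Verlinde algebra geometrically: by Proposition~\ref{satzmodfunctorontorus} there are canonical isomorphisms $\widetilde{\cat{V}}(\cat{C}^Z)_{g,h} \cong Z(\torus^2,(g,h))$ for all commuting $g,h\in G$, and hence $\widetilde{\cat{V}}(\cat{C}^Z) \cong \bigoplus_{gh=hg} Z(\torus^2,(g,h))$ as vector spaces. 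The second step is to realise the modular datum geometrically: as recalled in the proof of Proposition~\ref{satztwistedsectorsnontrivial}, the mapping class group of the torus contains an element $S : \torus^2 \to \torus^2$ pulling the bundle $(g,h)$ back to $(h^{-1},g)$; by Remark~\ref{bmkextcob}, \ref{bmkextcobdiffeo} this diffeomorphism yields an \emph{invertible} 2-morphism in $G\text{-}\Cob(3,2,1)$, so that $Z$ sends it to an automorphism of $\bigoplus_{gh=hg} Z(\torus^2,(g,h))$ permuting the graded pieces exactly as the map $\widetilde s$ of Definition~\ref{defmultimodular} does.

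It then remains to verify that, under the identification of the first step, the algebraically defined map $\widetilde s$ coincides with the geometric transformation $Z(S)$ up to invertible corrections coming from the twist (i.e.\ from framing changes). This I would carry out by tracing the coend manipulations in the proof of Proposition~\ref{satzmodfunctorontorus} against the decomposition of $\torus^2$ used there, matching each elementary step with a move in the graphical calculus, exactly as in the non-equivariant computation of the $S$-transformation in \cite{BDSPV153D}, now using the equivariant braiding (Proposition~\ref{satzequivbraiding}), the equivariant twist (Proposition~\ref{satzgribbonkat}) and the duality of Proposition~\ref{satzczduals}. Once this is in place, the invertibility of $Z(S)$ forces $\widetilde s$ to be invertible, which is precisely $G$-multimodularity of $\cat{C}^Z$. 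I expect this last compatibility to be the main obstacle: the definition of $\widetilde s$ lives entirely inside the graphical calculus of $\cat{C}^Z$, so the bulk of the work is the careful bookkeeping — of orientations of the cutting circles and of the twist/framing factors that appear — needed to recognise it as the value of $Z$ on the $S$-transformation of the torus.
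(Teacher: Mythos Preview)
Your proposal is correct and follows essentially the same route as the paper: part~\ref{thmgmodcatoncirclea} via the orbifold theory, simplicity of the unit in $Z/G(\sphere^1)$, \cite{BDSPV153D}, Theorem~\ref{thmorbifoldtheorymodular} and Kirillov's Theorem~\ref{thmalgebraicorbifoldization}; part~\ref{thmgmodcatoncircleb} via the invertible $S$-move on the decorated torus in $G\text{-}\Cob(3,2,1)$. The compatibility you flag as the ``main obstacle'' --- that the algebraic $\widetilde s$ agrees with $Z$ evaluated on this 2-morphism --- is exactly what the paper invokes, but it is dispatched there by citing \cite[Section~5.3]{BDSPV153D} for the non-equivariant case and asserting that the equivariant case is a straightforward generalization, rather than redoing the coend bookkeeping you outline.
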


\begin{proof}
	If the unit of $\cat{C}^Z$ is simple, the monoidal unit of $Z/G(\sphere^1)$ is simple as well by Proposition~\ref{satzdescriptorbtensorstructure}. Now Theorem~\ref{thmorbifoldtheorymodular} yields an equivalence  
	\begin{align} \frac{Z}{G}(\sphere^1) \cong \frac{\cat{C}^Z}{G}\end{align} of complex finitely semisimple ribbon categories. But by \cite{BDSPV153D} the category $Z/G(\sphere^1)$ is even modular, hence so is $\cat{C}^Z /G$. Now Theorem~\ref{thmalgebraicorbifoldization} implies that $\cat{C}^Z$ is $G$-modular.  This proves \ref{thmgmodcatoncirclea}.

	For the proof of \ref{thmgmodcatoncircleb},
		by Proposition~\ref{satzgribbonkat} we only have to show that the operator $\widetilde s : \widetilde{\cat{V}}  (\cat{C}^Z) \to \widetilde{\cat{V}}  (\cat{C}^Z) $ is invertible. In the non-equivariant case this follows from the fact that $\widetilde s$ is obtained by evaluation on $Z$ on an invertible 2-morphism $\torus^2 \Longrightarrow \torus^2$ in the bordism bicategory, see \cite[Section~5.3]{BDSPV153D} for a detailed discussion. A straightforward generalization to the equivariant case yields an invertible 2-morphism from $\torus^2$ with bundle decoration $(g,h)$ for $g,h \in G$ with $gh=hg$ to $\torus^2$ with bundle decoration $(h^{-1},g)$, compare to the proof of Proposition~\ref{satztwistedsectorsnontrivial}. By evaluation of $Z$ on this 2-morphism we see that the map $\widetilde s : \widetilde{\cat{V}}  (\cat{C}) _{g,h} \to \widetilde{\cat{V}}  (\cat{C}) _{h^{-1},g}$ is invertible. But then $\widetilde s : \widetilde{\cat{V}}  (\cat{C}^Z) \to \widetilde{\cat{V}}  (\cat{C}^Z) $ is also invertible.
\end{proof}

\spaceplease

\begin{remark}
	We can give another proof of Theorem~\ref{thmgmodcatoncircle} \ref{thmgmodcatoncirclea}: By Remark~\ref{bmkdefequivmod}, \ref{bmkdefequivmod3} it suffices to show the following two things:
	\begin{itemize}
		\item  The neutral sector of $\cat{C}^Z$ is modular: This follows from the fact that we can pull $Z$ back along the symmetric monoidal functor $\Cob(3,2,1) \to G\text{-}\Cob(3,2,1)$ equipping all manifolds with the trivial $G$-bundle. This yields an ordinary extended topological field theory whose value on the circle is $\cat{C}_1^Z$, which is a modular category by \cite{BDSPV153D}.
		
		\item The twisted sectors of $\cat{C}^Z$ are non-trivial: This was proven directly in Proposition~\ref{satztwistedsectorsnontrivial} based on modular invariance. 
		
		\end{itemize}
	\end{remark}

 Note that \ref{thmgmodcatoncircleb} generalizes \ref{thmgmodcatoncirclea} if we take the statement in Proposition~\ref{satzdescriptorbtensorstructure} on the simplicity of the units into account.

\begin{remark}\label{remclass}
For a finite group $G$ there are two main constructions for three-dimensional $G$-equivariant topological field theories due to Turaev and Virelizier:

\begin{itemize}

\item The \emph{homotopy Turaev-Viro construction} \cite{htv} takes as an input a spherical $G$-fusion category $\cat{S}$ and yields the $G$-equivariant Turaev-Viro theory $\operatorname{TV}^G_\cat{S}$,

\item The \emph{homotopy Reshetikhin-Turaev construction} \cite{hrt} takes as an input an (anomaly-free) $G$-modular category $\cat{C}$ and yields the $G$-equivariant Reshetikhin-Turaev theory $\operatorname{RT}^G_\cat{C}$.

\end{itemize}
Both constructions are equivariant generalizations of the famous non-equivariant constructions, but so far only cover the non-extended case. However, it is likely that both types of theories admit extensions to 3-2-1-theories. 
For the following considerations in this remark we will assume that
\begin{itemize}
	\item the homotopy Reshetikhin-Turaev construction can be generalized to give extended homotopy quantum field theories in the sense of this article such that the value of $\operatorname{RT}^G_\cat{C}$ on the circle is $\cat{C}$,
	
	\item the homotopy Turaev-Viro construction can also be generalized to give extended homotopy quantum field theories and the evaluation of $\operatorname{TV}^G_\cat{S}$ on the circle will be given by the $G$-center $Z_G(\cat{S})$ of $\cat{S}$ according to the conjecture
	\begin{align}\operatorname{TV}^G_\cat{S} \cong \operatorname{RT}^G_{Z_G(\cat{S})}\label{tvconjecture} \end{align} made in
	\cite{htv} (as a generalization of the non-equivariant case) that we would also have to be able to interpret on the level of extended field theories.
	
	\end{itemize}
Under these assumptions,
 we can compute the orbifold theories of $\operatorname{RT}^G_\cat{C}$ and $\operatorname{TV}^G_\cat{S}$ for a $G$-modular category $\cat{C}$ and a $G$-fusion category $\cat{S}$:
By Theorem~\ref{thmorbifoldtheorymodular} the orbifold theory $\operatorname{RT}^G_\cat{C}/G: \Cob(3,2,1) \to \TwoVect$ of $\operatorname{RT}^G_\cat{C}$ is the Reshetikhin-Turaev theory for the orbifold category $\cat{C}/G$, i.e.\
\begin{align}
\frac{\operatorname{RT}^G_\cat{C}}{G} \cong \operatorname{RT}_{\cat{C}/G}. \label{rtorbifold}
\end{align}
For $\operatorname{TV}^G_\cat{S}$ we find
\begin{align}
\frac{\operatorname{TV}^G_\cat{S}}{G}(\sphere^1) \overeqn{tvconjecture}{\cong}  \frac{\operatorname{RT}^G_{Z_G(\cat{S})}}{G} (\sphere^1) \overeqn{rtorbifold}{\cong} \frac{Z_G(\cat{S})}{G} \cong Z(\cat{S})
\end{align} 
as modular categories, where in the last step we used \cite[Theorem~3.5]{centerofgradedfusioncategories}. Hence, the orbifold theory $\operatorname{TV}^G_\cat{S}/G$ is just the non-equivariant Turaev-Viro theory for $\cat{S}$ seen as spherical fusion category (recall that a $G$-fusion category is fusion if and only if $G$ is finite, see \cite[Section~4.2]{htv}). Hence, on the level of spherical fusion categories, orbifoldization amounts to forgetting equivariance.

Furthermore, we remark that a generalization of $\operatorname{RT}^G_?$ taking $G$-multimodular categories as input should provide a weak inverse to the functor from $G$-equivariant 3-2-1-dimensional topological field theories to $G$-multimodular categories by evaluation on the circle, see Theorem~\ref{thmgmodcatoncircle} (when restricting to the anomaly-free case). Hence, $G$-equivariant 3-2-1-dimensional topological field theories should be classified by (anomaly-free) $G$-multimodular categories.

\end{remark}

\end{document}